\documentclass[11pt,a4paper]{scrartcl}
\usepackage[utf8]{inputenc}
\usepackage[english]{babel}
\usepackage{helvet, stmaryrd, ulem}
\usepackage{amsmath}
\usepackage{amsthm}
\usepackage{amsfonts}
\usepackage{amssymb}
\usepackage{mathtools}
\usepackage{mathrsfs}
\usepackage{dsfont}
\usepackage{tabularx}
\usepackage{xcolor}
\usepackage{bbm}
\usepackage[paper=a4paper,left=25mm,right=25mm,top=25mm,bottom=35mm]{geometry}

\usepackage{hyperref}
\usepackage[noabbrev,capitalize]{cleveref}

\author{Helena Kremp, Nicolas Perkowski}
\title{Kolmogorov equations with singular paracontrolled terminal conditions}

\newtheorem{theorem}{Theorem}[section]
\newtheorem{definition}[theorem]{Definition}     
\newtheorem{proposition}[theorem]{Proposition}
\newtheorem{lemma}[theorem]{Lemma}	

\newtheorem{corollary}[theorem]{Corollary}

\newtheorem{remark}[theorem]{Remark}
\newtheorem{assumption}[theorem]{Assumption}

\numberwithin{equation}{section}

\newcommand{\R}{\mathbb{R}}

\newcommand{\N}{\mathbb{N}}

\newcommand{\1}{\mathds{1}}
\newcommand{\F}{\mathcal{F}}
\newcommand{\La}{\mathfrak{L}^{\alpha}_{\nu}}
\let\oldmathcal\mathcal
\newcommand{\calC}{\mathscr{C}}
\newcommand{\CTcalC}{C_T\calC}
\newcommand{\calK}{\oldmathcal{K}}
\newcommand{\calX}{\mathscr{X}}
\newcommand{\calV}{\mathscr{V}}

\renewcommand{\mathcal}[1]{\mathscr{#1}}
\renewcommand{\epsilon}{\varepsilon}

\newcommand{\para}{\varolessthan}
\newcommand{\arap}{\varogreaterthan}
\newcommand{\reso}{\varodot}

\newcommand{\squeeze}[2][0]{\mbox{$\medmuskip=#1mu\displaystyle#2$}}

\DeclarePairedDelimiter{\abs}{\lvert}{\rvert}
\DeclarePairedDelimiter{\norm}{\lVert}{\rVert}

\DeclarePairedDelimiter{\paren}{(}{)}

\begin{document}
\begin{center}
\begin{huge}
Fractional Kolmogorov equations with singular paracontrolled terminal conditions\\
\end{huge}
\begin{Large}
\vspace{0.5cm}
Helena Kremp\footnote{Technische Universität Wien, helena.kremp@asc.tuwien.ac.at}, Nicolas Perkowski\footnote{Freie Universität Berlin, perkowski@math.fu-berlin.de}\\
\end{Large}
\vspace{1cm}
\hrule
\end{center}
We consider backward fractional Kolmogorov equations with singular Besov drift of low regularity and singular terminal conditions. To treat drifts beyond the socalled Young regime, we assume an enhancement assumption on the drift and consider paracontrolled terminal conditions.
Our work generalizes previous results on the equation from \cite{Cannizzaro2018,kp} to the case of \textit{singular paracontrolled terminal conditions} and simultaneously treats singular and non-singular data in one concise solution theory. We introduce a paracontrolled solution space, that implies parabolic time and space regularity on the solution without introducing the socalled "modified paraproduct" from \cite{Gubinelli2017KPZ}. The tools developed in this article apply for general linear PDEs that can be tackled with the paracontrolled ansatz.\\
\textit{Keywords: fractional Laplace operator, paracontrolled distributions, singular terminal conditions}\\
\textit{MSC2020: 35A21, 60L40}
\vspace{0,5cm}
\hrule
\vspace{1cm}

\begin{section}{Introduction}
\noindent Kolmogorov equations are second order parabolic differential equations. Their connection to stochastic processes was already investigated by Kolmogorov in the seminal work \cite{Kolmogorov1931}. There exist analytic and probabilistic methods to study Kolmogorov equations. We refer to the books \cite{Krylov1999, DaPrato2004, Krylov2008, bogachev2022fokker} for an overview on Kolmogorov equations in both finite and infinite dimensional spaces. In the finite dimensional setting, Kolmogorov equations with bounded and measurable coefficients and uniformly elliptic diffusion coefficients can be treated as a special case of the infinite dimensional Dirichlet form methods of \cite{Roeckner1995}, see also \cite[Section 2.4.1]{Krylov1999} and the connection to the martingale problem in \cite[Section 6.1.2]{Krylov1999}. We remain in the finite-dimensional setting, but consider distributional drifts in Besov spaces. Besov spaces play well with the  paracontrolled calculus that defines products of distributions, cf.~the Littlewood-Paley theory in \cite{Bahouri2011}. Previous articles that consider distributional drifts are \cite{Flandoli2003, Flandoli2017}, as well as \cite{Cannizzaro2018} in the setting of rougher distributional drifts. Heat kernel estimates for the solution to the Kolmogorov equation were established in \cite{Zhang2017, PvZ}.\\ In the article \cite{kp}, the Laplace operator is replaced by a generalized fractional Laplacian. We extend our previous results on the equation from \cite{kp} to allow for irregular terminal conditions. That is, we consider the fractional parabolic Kolmogorov backward equation
\begin{align*}
\paren[\big]{\partial_{t}-\La+V\cdot \nabla}u=f,\quad u(T,\cdot)=u^{T},
\end{align*} 
on $[0,T]\times\R^{d}$, where $\La$ generalizes the fractional Laplace operator $(-\Delta)^{\alpha/2}$ for $\alpha\in (1,2]$
and $V$ is a vector-valued Besov drift with negative regularity $\beta\in (\frac{2-2\alpha}{3},0)$, i.e.~$V\in C([0,T],(B^{\beta}_{\infty,\infty})^{d})=C_{T}\calC^{\beta}_{\R^{d}}$. Since $V$ is a distribution, we need to be careful with well-definedness of the product $V\cdot\nabla u$.
The regularity obtained from $-(-\Delta)^{\alpha/2}$ suggests that $u(t,\cdot) \in \mathcal C^{\alpha+\beta}$ if right-hand side $f$ and terminal condition $u^{T}$ are regular enough. Therefore we have $\nabla u(t,\cdot) \in \mathcal C^{\alpha+\beta-1}$. Since the product $V(t,\cdot) \cdot \nabla u(t,\cdot)$ is well-defined if and only if the sum of the regularities of the factors is strictly positive, we obtain the condition $\alpha+2\beta-1 >0$, equivalently $\beta > (1-\alpha)/2$. We call this the \textit{Young regime}, in analogy to the regularity requirements that are needed for the construction of the Young integral. However, we go beyond the Young regime, considering also the so-called \textit{rough regime} $\beta\in (\frac{2-2\alpha}{3},\frac{1-\alpha}{2}]$. In the rough case, we employ paracontrolled distributions (cf.~\cite{Gubinelli2015Paracontrolled}) to solve the equation. The idea is to gain some regularity by treating $u$ as a perturbation of the solution of the linearized equation with additive noise, $\partial_t w = \La w - V$. The techniques work as long as the nonlinearity $V \cdot \nabla u$ is of lower order than the linear operator $\La$, i.e. for $\alpha > 1$ or equivalently $(2-2\alpha)/3 < (1-\alpha)/2$. The price one has to pay to go beyond the Young regime is a stronger assumption on $V$. That is, we assume that certain resonant products involving $V$ are a priori given. Those play the role of the iterated integrals in rough paths theory (cf.~\cite{fh}). We then enhance $V$ by that resonant product component and call the enhancement $\calV$.\\
In \cite{Cannizzaro2018, kp}, only regular terminal conditions were considered, i.e.~$u^{T}\in\calC^{\alpha+\beta}$ in the Young regime and $u^{T}\in\calC^{2(\alpha+\beta)-1}$ in the rough regime. The right-hand side $f$ can either be an element of $C_{T}L^{\infty}$ or $f=V^{i}$ for $i=1,\dots,d$. There are techniques available to treat less regular terminal conditions, cf.~\cite[Section 6]{Gubinelli2017KPZ}. With the help of those techniques, one can allow for terminal conditions $u^{T}\in \calC^{(1-\gamma)\alpha+\beta}$ in the Young case and $u^{T}\in \calC^{(2-\gamma)\alpha+2\beta-1}$ in the rough case for $\gamma\in [0,1)$, obtaining a solution $u_{t}\in\calC^{\alpha+\beta}$ for $t<T$ and blow-up $\gamma$ for $t\to T$. In this work, consider moreover \textit{singular paracontrolled} right-hand sides $f$ as well as \textit{singular  paracontrolled terminal condition} $u^{T}$, which includes all cases mentioned above. Moreover, we can consider $f$ and $u^{T}$ more generally as elements of Besov spaces $\calC^{\theta}_{p}=B_{p,\infty}^{\theta}$ with integrability parameter $p\in[1,\infty]$. Examples for terminal conditions that we cover in the rough case include the Dirac measure, that is $u^{T}=\delta_{0}\in\calC^{0}_{1}$, and $u_{T}=V(T,\cdot)$. To be more precise, in the rough regime, we assume paracontrolled right-hand sides and terminal conditions, 
\begin{align*}
f=f^{\sharp} + f^{\prime}\para V,\quad u^{T}=u^{T,\sharp}+u^{T,\prime}\para V_{T},
\end{align*} with $u^{T,\prime}, f^{\prime}_{t}\in\calC^{\alpha+\beta-1}_{p}$ and remainders $f^{\sharp}_{t}\in\calC^{\alpha+2\beta-1}_{p}$, $u^{T,\sharp}\in\calC^{(2-\gamma)\alpha+2\beta-1}_{p}$ for $\gamma\in [0,1)$. For $f^{\prime}_t$ and $f^{\sharp}_t$  we also allow a blow-up $\gamma$ for $t\to T$. 
We prove existence and uniqueness of mild solutions of the Kolmogorov backward equation for singular paracontrolled data $(f,u^{T})$. The paracontrolled solution is an element of the solution space with blow-up $\gamma$ at terminal time $T$.
As a byproduct, we prove a new commutator estimate for the $(-\La)$-semigroup, cf.~\cref{lem:sharp}, that allows to gain not only space regularity, but also time regularity. Thanks to \cref{lem:sharp} there is no need for the so-called ``modified paraproduct" from \cite[Section 6.1]{Gubinelli2017KPZ}.  Moreover, we prove continuity of the Kolmogorov solution map
and a uniform bound for the solutions considered on subintervals of $ [0,T]$ for bounded sets of data.\\ 
It is important to mention that the techniques we develop in this article are not limited to that particular equation and can be used to treat other linear singular PDEs with the paracontrolled approach. In this sense we see the Kolmogorov PDE as a model example.\\
The work is structured as follows. In \cref{sec:prelim} we introduce the generalized fractional Laplacian $\La$ and its semigroup. We prove semigroup and commutator estimates. 
In \cref{section:singular-gen-eq} we introduce the solution spaces and prove generalized Schauder and commutator estimates thereon. 
Finally, we solve the Kolmogorov equation with singular paracontrolled data $(f,u^{T})$ in \cref{sec:singularKeq} and prove continuity of the solution map, as well as a uniform bound for the solutions on subintervals.\\[1em]

\end{section}

\begin{section}{Preliminaries}\label{sec:prelim}
Below we introduce some technical ingredients about Besov spaces and paraproducts, that we will need in the sequel. 
We study estimates for the generalized fractional Laplacian and its semigroup, as well as, commutator estimates involving the paraproducts and the fractional semigroup.\\

\noindent Let $(p_{j})_{j\geqslant -1}$ be a smooth dyadic partition of unity, i.e. a family of functions $p_{j}\in C^{\infty}_{c}(\R^{d})$ for $j\geqslant -1$, such that 
\begin{enumerate}
\item[1.)]$p_{-1}$ and $p_{0}$ are non-negative radial functions (they just depend on the absolute value of $x\in\R^{d}$), such that the support of $p_{-1}$ is contained in a ball and the support of $p_{0}$ is contained in an annulus;
\item[2.)]$p_{j}(x):=p_{0}(2^{-j}x)$, $x\in\R^{d}$, $j\geqslant 0$;
\item[3.)]$\sum_{j=-1}^{\infty}p_{j}(x)=1$ for every $x\in\R^{d}$; and
\item[4.)]$\operatorname{supp}(p_{i})\cap \operatorname{supp}(p_{j})=\emptyset$ for all $\abs{i-j}>1$.
\end{enumerate}
We then define the Besov spaces for $p,q\in [1,\infty]$,
\begin{align}\label{def:bs}
B^{\theta}_{p,q}:=\{u\in\mathcal{S}':\norm{u}_{B^{\theta}_{p,q}}=\norm[\big]{(2^{j\theta}\norm{\Delta_{j}u}_{L^{p}})_{j\geqslant -1}}_{\ell^{q}}<\infty\},
\end{align}
where $\Delta_{j}u=\mathcal{F}^{-1}(p_{j}\mathcal{F}u)$ are the Littlewood-Paley blocks, and the Fourier transform is defined with the normalization $\hat{\varphi}(y):=\F\varphi (y):=\int_{\R^{d}}\varphi(x)e^{-2\pi i\langle x,y\rangle}dx$ (and $\F^{-1}\varphi(x)=\hat{\varphi}(-x)$); moreover, $\mathcal{S}$ are the Schwartz functions and $\mathcal S'$ are the Schwartz distributions.\\
Let $C^{\infty}_{b}=C^{\infty}_{b}(\R^{d},\R)$ denote the space of bounded and smooth functions with bounded partial derivatives.
For $q=\infty$, the space $B^{\theta}_{p,\infty}$ has the unpleasant property that $C^{\infty}_b \subset B^\theta_{p,\infty}$ is not dense.
Therefore, we rather work with the following space:
\begin{align}\label{eq:sep}
B^{\theta}_{p,\infty}:=\{u\in\mathcal{S}'\mid\lim_{j\to\infty}2^{j\theta}\norm{\Delta_{j}u}_{L^{p}}=0\},
\end{align} 
for which $C^\infty_b$ is a dense subset (cf. \cite[Remark 2.75]{Bahouri2011}). We also use the notation $\calC^{\theta}_{\R^{d}}:=(\calC^{\theta})^{d}=\calC^{\theta}(\R^{d},\R^{d})$, $\calC^{\theta-}:=\bigcap_{\gamma<\theta}\calC^{\gamma}$ and $\calC^{\theta+}=\bigcup_{\gamma>\theta}\calC^{\gamma}$. Furthermore, we introduce the notation $\calC^{\theta}_{p}:=B^{\theta}_{p,\infty}$ for $\theta\in\R$ and $p\in [1,\infty]$, where $\calC^{\theta}:=\calC^{\theta}_{\infty}$ with norm denoted by $\norm{\cdot}_{\theta}:=\norm{\cdot}_{\calC^{\theta}}$.\\ 
For $1\leqslant p_{1}\leqslant p_{2}\leqslant\infty$, $1\leqslant q_{1}\leqslant q_{2}\leqslant\infty$ and $s\in\R$, the Besov space $B^{s}_{p_{1},q_{1}}$ is continuously embedded in $B_{p_{2},q_{2}}^{s-d(1/p_{1}-1/p_{2})}$ (cf. \cite[Proposition 2.71]{Bahouri2011}). Furthermore, we will use that for $u\in B^{s}_{p,q}$ and a multi-index $n\in\N^{d}$, $\norm{\partial^{n}u}_{B^{s-\abs{n}}_{p,q}}\lesssim \norm{u}_{B^{s}_{p,q}}$, which follows from the more general multiplier result from \cite[Proposition 2.78]{Bahouri2011}.\\
We recall from Bony's paraproduct theory (cf. \cite[Section 2]{Bahouri2011}) that in general for $u\in\calC^{\theta}$ and $v\in\calC^{\beta}$ with $\theta,\beta\in\R$, the product $u v:=u\para v+u\arap v +u \reso v$ , is  well defined in $\calC^{\min(\theta,\beta,\theta+\beta)}$ if and only if $\theta+\beta>0$. Denoting $S_{i}u=\sum_{j=-1}^{i-1}\Delta_{j}u$, the paraproducts are defined as follows
\begin{align*}
u\para v:=\sum_{i\geqslant -1} S_{i-1}u\Delta_{i}v,\quad u\arap v:=v\para u, \quad u\reso v:= \sum_{\abs{i-j}\leqslant 1}\Delta_{i}u\Delta_{j}v.
\end{align*} Here, we use the notation of \cite{Martin2017, Mourrat2017Dynamic} for the para- and resonant products $\para, \arap$ and  $\reso$.\\
In estimates we often use the notation $a\lesssim b$, which means, that there exists a constant $C>0$, such that $a\leqslant C b$. In the case that we want to stress the dependence of the constant $C(d)$ in the estimate on a parameter $d$, we write $a\lesssim_{d} b$.\\
The paraproducts satisfy the following estimates for $p,p_{1},p_{2}\in[1,\infty]$ with $\frac{1}{p}=\frac{1}{p_{1}}+\frac{1}{p_{2}}\leqslant 1$ and $\theta,\beta\in\R$ (cf. \cite[Theorem A.1]{PvZ} and \cite[Theorem 2.82, Theorem 2.85]{Bahouri2011})
\begin{equation}
\begin{aligned}\label{eq:paraproduct-estimates}
\norm{u\reso v}_{\calC^{\theta+\beta}_{p}} & \lesssim\norm{u}_{\calC^{\theta}_{p_{1}}}\norm{v}_{\calC^{\beta}_{p_{2}}}, \qquad \text{if }\theta +\beta > 0,\\
\norm{u\para v}_{\calC^{\beta}_{p}} \lesssim\norm{u}_{L^{p_{1}}}\norm{v}_{\calC^{\beta}_{p_{2}}}& \lesssim\norm{u}_{\calC^{\theta}_{p_{1}}}\norm{v}_{\calC^{\beta}_{p_{2}}}, \qquad \text{if } \theta > 0,\\
\norm{u\para v}_{\calC^{\beta+\theta}_{p}}& \lesssim\norm{u}_{\calC^{\theta}_{p_{1}}}\norm{v}_{\calC^{\beta}_{p_{2}}}, \qquad \text{if } \theta < 0.
\end{aligned}
\end{equation}
So if $\theta + \beta > 0$, we have $\norm{u v}_{\calC^{\gamma}_{p}}\lesssim\norm{u}_{\calC^{\theta}_{p_{1}}}\norm{v}_{\calC^{\beta}_{p_{2}}}$ for $\gamma:=\min(\theta,\beta,\theta+\beta)$.\\

\noindent We define the generalized fractional Laplacian $\La$ via Fourier analysis as follows.

\begin{definition}\label{def:fl}
Let $\alpha \in (0,2)$ and let $\nu$ be a symmetric (i.e. $\nu(A)=\nu(-A)$), finite and non-zero measure on the unit sphere $S\subset\R^{d}$. We define the operator $\La$ as
\begin{align}
\La\F^{-1}\varphi=\F^{-1}(\psi^{\alpha}_{\nu} \varphi)\qquad\text{for $\varphi\in C^\infty_b$,}
\end{align}  where
$\psi^{\alpha}_{\nu} (z):=\int_{S}\abs{\langle z,\xi\rangle}^{\alpha}\nu(d\xi).$
For $\alpha=2$, we set $\La:=-\frac{1}{2}\Delta$.
\end{definition}

\begin{remark}
If we take $\nu$ as a suitable multiple of the Lebesgue measure on the sphere, then $\psi^\alpha_\nu(z) = |2\pi z|^\alpha$ and thus $\La$ is the fractional Laplace operator $(-\Delta)^{\alpha/2}$. 
\end{remark}

\begin{assumption}\label{ass}
Throughout the paper, we assume that the measure $\nu$ from \cref{def:fl} has $d$-dimensional support, in the sense that the linear span of its support is $\R^d$. 
\end{assumption}
\noindent So far we defined $\La$ on $C^\infty_b$, so in particular on Schwartz functions. But the definition of $\La$ on Schwartz distributions by duality is problematic, because for $\alpha \in (0,2)$ the function $\psi^{\alpha}_{\nu}$ has a singularity in $0$. This motivates the next proposition.

\begin{proposition}[Continuity of the operator $\La$]\label{prop:contfl}
Let $\alpha\in (0,2]$. Then for $\beta\in\R$ and $u \in C^\infty_b$, $p\in[1,\infty]$, we have
\begin{align*}
\norm{\La u}_{\calC^{\beta-\alpha}_{p}}\lesssim\norm{u}_{\calC^{\beta}_{p}}.
\end{align*}
In particular, $\La$ can be uniquely extended to a continuous operator from $\calC^{\beta}_{p}$ to $\calC^{\beta-\alpha}_{p}$.
\end{proposition}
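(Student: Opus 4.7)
The plan is to reduce the Besov estimate to a Bernstein-type bound for $\La$ on Littlewood--Paley blocks. Since $\La$ is a Fourier multiplier, it commutes with each $\Delta_j$, so $\Delta_j \La u = \La \Delta_j u$. The goal is therefore to show $\norm{\La f}_{L^p}\lesssim(2^j\vee 1)^\alpha\norm{f}_{L^p}$ whenever $\hat f$ is supported in a ball of radius $\sim 1$ (for $j=-1$) or in an annulus of radius $\sim 2^j$ (for $j\geqslant 0$). Once this holds, applying it to $f=\Delta_j u$ and multiplying by $2^{j(\beta-\alpha)}$ gives $2^{j(\beta-\alpha)}\norm{\Delta_j \La u}_{L^p}\lesssim 2^{j\beta}\norm{\Delta_j u}_{L^p}\leqslant\norm{u}_{\calC^\beta_p}$, and taking the supremum in $j$ yields the claim.

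For $\alpha\in(0,2)$, I would prove the Bernstein-type bound using the pointwise Lévy--Khintchine-type representation
\begin{align*}
\La u(x)=c_\alpha\int_S\int_0^\infty\big[2u(x)-u(x+t\xi)-u(x-t\xi)\big]\,t^{-1-\alpha}\diff t\,\nu(\diff\xi),
\end{align*}
which follows from the scalar Fourier identity $\int_0^\infty(1-\cos(as))s^{-1-\alpha}\diff s=C_\alpha|a|^\alpha$ applied to $|\langle z,\xi\rangle|^\alpha$. Pulling the $L^p$-norm inside via Minkowski, the second-order difference admits the two competing estimates $t^2\norm{\partial_\xi^2 f}_{L^p}$ (from Taylor expansion) and $4\norm{f}_{L^p}$ (trivial). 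For band-limited $f$ as above, Bernstein's inequality gives $\norm{\partial_\xi^2 f}_{L^p}\lesssim(2^j\vee 1)^2\norm{f}_{L^p}$ with a constant uniform in $\xi\in S$. Splitting the $t$-integral at $t_0=(2^j\vee 1)^{-1}$ and using the first bound for $t<t_0$ and the second for $t>t_0$, both pieces contribute $\lesssim(2^j\vee 1)^\alpha\norm{f}_{L^p}$ (needing $\alpha<2$ for integrability near zero), and the final $\nu$-integral produces only the finite factor $\nu(S)$.

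The case $\alpha=2$, where $\La=-\tfrac{1}{2}\Delta$, is a direct consequence of Bernstein applied to second derivatives. The unique continuous extension to all of $\calC^\beta_p$ then follows from the density of $C^\infty_b$ in $\calC^\beta_p$ for the separable scale \eqref{eq:sep}. The main obstacle I anticipate is securing uniformity of the Bernstein constant in the direction $\xi\in S$; this is precisely why I favor the pointwise representation over a Mikhlin--Hörmander multiplier theorem, since the latter would require $\psi^\alpha_\nu\in C^{\lfloor d/2\rfloor+1}$ away from the origin, which can fail when $\nu$ is singular (e.g.\ a sum of point masses on the axes), whereas the pointwise approach works universally under \cref{ass}.
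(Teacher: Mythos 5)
Your proof is correct, and it takes a genuinely different route from the paper. The paper splits into two regimes: for $j\geqslant 0$ it invokes the Fourier-multiplier lemma \cite[Lemma 2.2]{Bahouri2011} together with the claim that $\psi^\alpha_\nu$ is smooth away from the origin with derivatives bounded by $\abs{z}^{\alpha-\abs{\mu}}$; for $j=-1$ it uses the L\'evy integro-differential representation of $-\La$ combined with Young's convolution inequality. You instead use the symmetric second-difference L\'evy--Khintchine representation of $\La$ for all $j$ at once, together with the two competing bounds $t^2\norm{\partial_\xi^2 f}_{L^p}$ versus $4\norm{f}_{L^p}$ and a dyadic split of the $t$-integral at $t_0=(2^j\vee 1)^{-1}$; a quick check confirms the small-$t$ piece gives $(2^j\vee 1)^\alpha/(2-\alpha)$ and the large-$t$ piece gives $4(2^j\vee 1)^\alpha/\alpha$, so the constants behave correctly. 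What your approach buys is exactly what you point out at the end: it is insensitive to the regularity of $\psi^\alpha_\nu$. In fact, for a singular spherical measure such as $\nu=\sum_{i}(\delta_{e_i}+\delta_{-e_i})$, the symbol is $\psi^\alpha_\nu(z)=2\sum_i\abs{z_i}^\alpha$, which is not even $C^1$ on the coordinate hyperplanes away from the origin; the paper's blanket assertion that $\psi^\alpha_\nu\in C^\infty(\R^d\setminus\{0\})$ therefore fails for such $\nu$ (which are admissible under \cref{ass}), and the cited multiplier lemma typically demands on the order of $d$ derivatives. Your pointwise argument closes this gap cleanly, requiring only symmetry and finiteness of $\nu$. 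The one step you should make explicit, since $C^\infty_b$ is not Schwartz, is that the representation is applied to the band-limited blocks $\Delta_j u$, whose Fourier transforms are compactly supported, so the scalar identity $\int_0^\infty(1-\cos(as))s^{-1-\alpha}\,\diff s = C_\alpha\abs{a}^\alpha$ can be plugged into the Fourier side without issue. The $\alpha=2$ case via Bernstein on $\partial_k\partial_l f$ and the extension by density via \eqref{eq:sep} are both correct.
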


\begin{proof}
For $j \geqslant 0$ it follows from \cite[Lemma~2.2]{Bahouri2011} the estimate $\| \La \Delta_j u\|_{L^p} \lesssim 2^{-j(\beta-\alpha)} \|u \|_{\calC^{\beta}_{p}}$. This uses that $\psi^{\alpha}_{\nu}$ is infinitely continuously differentiable in $\R^{d}\setminus\{0\}$ with $\abs{\partial^{\mu}\psi^{\alpha}_{\nu}(z)}\lesssim\abs{z}^{\alpha-\abs{\mu}}$ for a multi-index $\mu\in\N_{0}^{d}$ with $\abs{\mu}:=\mu_{1}+\dots+\mu_{d}\leqslant\alpha$ and that $\Delta_{j}u$ has a Fourier transform, which is supported in $2^{j}\cal{A}$, where $\cal{A}$ is the annulus, where $p_{0}$ is supported. For $j=-1$ we use that $-\La \varphi = A\varphi$ for test functions $\varphi\in C^{\infty}_{b}$ and $A$ defined as
\begin{align}
A\varphi(x)=\int_{\R^{d}}\paren[\big]{\varphi(x+y)-\varphi(x)-\mathbf{1}_{\{\abs{y}\leqslant 1\}}(y) \nabla \varphi(x) \cdot y}\mu(dy)\qquad\text{for }\varphi\in C_{b}^{\infty}.
\end{align} and therefore
\begin{align}\label{bernstein-est}
	-\La \F^{-1}\tilde{p}_{-1}(x) & =  \int_{\R^d}\left(\F^{-1}\tilde{p}_{-1}(x+y) - \F^{-1}\tilde{p}_{-1} u(x) - \nabla \F^{-1}\tilde{p}_{-1} u(x)\cdot y \1_{\{|y| \leqslant 1\}} \right) \mu(dy)\nonumber \\
	& \lesssim \int_{B(0,1)} \|D^2\F^{-1}\tilde{p}_{-1}\|_{L^\infty}|y|^2 \mu(dy) + \|\F^{-1}\tilde{p}_{-1}\|_{L^\infty} \mu(B(0,1)^c) \nonumber\lesssim 1
\end{align}
where $B(0,1) = \{|y| \leqslant 1\}$ and $\tilde{p}_{-1}$ is smooth and compactly supported in a ball and such that $\tilde{p}_{-1}p_{-1}=p_{-1}$.
Then we obtain with $-\La\Delta_{-1}u=-\La\F^{-1}\tilde{p}_{-1}\ast\Delta_{-1}u$ and Young's convolution inequality,  
\begin{align*}
\norm{-\La\Delta_{-1}u}_{L^{p}}\leqslant \norm{-\La\F^{-1}\tilde{p}_{-1}}_{L^{1}}\norm{\Delta_{-1}u}_{L^{p}}\leqslant\norm{-\La\F^{-1}\tilde{p}_{-1}}_{L^{\infty}}\norm{\Delta_{-1}u}_{L^{p}}\lesssim \norm{u}_{\calC^{\beta}_{p}}.
\end{align*} 
\end{proof}

\noindent For $z \in \R^d \setminus\{0\}$, we also have
\[
	\psi^{\alpha}_{\nu}(z)= |z|^\alpha \int_{S} \Big|\Big\langle \frac{z}{|z|},\xi\Big\rangle\Big|^\alpha \nu(d\xi) \geqslant |z|^\alpha \min_{|y|=1} \int_{S} |\langle y,\xi\rangle|^\alpha \nu(d\xi),
\]
and by~\cref{ass} the minimum on the right hand side is strictly positive. Otherwise, there exists some $y_0\neq 0$ with $\int_S |\langle y_0,\xi\rangle|^\alpha \nu(d\xi) = 0$ and this would mean that the support of $\nu$ (and thus also its span) is contained in the orthogonal complement of $\operatorname{span}(y_0)$. Therefore, $e^{-\psi^\alpha_\nu}$ decays faster than any polynomial at infinity and outside of $0$ it even behaves like a Schwartz function.

\begin{lemma}[Semigroup estimates]\label{schauder}
Let $\nu$ be a finite, symmetric measure on the sphere $S\subset\R^{d}$ satisfying \cref{ass}. Let $P_{t}\varphi:=\mathcal{F}^{-1}(e^{-t\psi^{\alpha}_{\nu}}\hat{\varphi}) = \rho_t \ast \varphi$, where $t > 0$, $\rho_t = \mathcal F^{-1} e^{-t\psi^\alpha_\nu} \in L^1$, and $\varphi\in C^\infty_b$. Then we have for $\vartheta\geqslant 0$, $\beta\in\R$, $p\in[1,\infty]$
\begin{align}\label{eq:schauder1}
\norm{P_{t}\varphi}_{\calC^{\beta+\vartheta}_{p}}\lesssim (t^{-\vartheta/\alpha} \vee 1) \norm{\varphi}_{\calC^{\beta}_{p}},
\end{align} and for $\vartheta\in [0,\alpha]$
\begin{align}
\norm{(P_{t}-\operatorname{Id})\varphi}_{\calC^{\beta-\vartheta}_{p}}\lesssim t^{\vartheta/\alpha}\norm{\varphi}_{\calC^{\beta}_{p}}.\label{eq:schauder2}
\end{align}
Furthermore, for $\beta\in (0,1)$, $p=\infty$,
\begin{align}\label{eq:schauder-infty}
\norm{(P_{t}-\operatorname{Id})\varphi}_{L^{\infty}}\lesssim t^{\beta/\alpha}\norm{\varphi}_{\calC^{\beta}}.
\end{align}
Therefore, if $\vartheta\geqslant 0$, then $P_{t}$ has a unique extension to a bounded linear operator in $L(\mathcal{C}^{\beta},\mathcal{C}^{\beta+\vartheta})$ and this extension satisfies the same bounds. 
\end{lemma}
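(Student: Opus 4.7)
My plan is to establish each of the three bounds by separating the Littlewood--Paley block $j=-1$ from the high-frequency blocks $j\geqslant 0$, as was already done in the proof of \cref{prop:contfl}.

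For \cref{eq:schauder1}, on a block $j\geqslant 0$ the Fourier multiplier $p_j(z) e^{-t\psi^\alpha_\nu(z)}$ is supported on the annulus $2^j\mathcal{A}$, where by \cref{ass} one has $\psi^\alpha_\nu(z)\gtrsim|z|^\alpha\sim 2^{j\alpha}$. Combining the bound $|\partial^\mu\psi^\alpha_\nu(z)|\lesssim|z|^{\alpha-|\mu|}$ (already used in \cref{prop:contfl}) with the chain rule and absorbing polynomial factors into the exponential yields $|\partial^\mu(p_j e^{-t\psi^\alpha_\nu})(z)|\lesssim 2^{-j|\mu|} e^{-ct2^{j\alpha}}$. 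The Bernstein-type multiplier estimate \cite[Lemma~2.2]{Bahouri2011} then gives $\norm{\Delta_j P_t\varphi}_{L^p}\lesssim e^{-ct2^{j\alpha}}\norm{\Delta_j\varphi}_{L^p}$, and $\sup_{j\geqslant 0} 2^{j\vartheta}e^{-ct2^{j\alpha}}\lesssim t^{-\vartheta/\alpha}\vee 1$ supplies the prefactor. On the block $j=-1$ I would follow \cref{prop:contfl} and write $\Delta_{-1}P_t\varphi = \mathcal{F}^{-1}(\tilde p_{-1}e^{-t\psi^\alpha_\nu})*\Delta_{-1}\varphi$ for a cutoff $\tilde p_{-1}$ with $\tilde p_{-1}p_{-1}=p_{-1}$; since $\mathcal{F}^{-1}(\tilde p_{-1}e^{-t\psi^\alpha_\nu})=\mathcal{F}^{-1}(\tilde p_{-1})*\rho_t$ and $\norm{\rho_t}_{L^1}=1$, Young's convolution inequality yields a uniform-in-$t$ $L^1$-bound on the kernel and hence $\norm{\Delta_{-1}P_t\varphi}_{L^p}\lesssim\norm{\Delta_{-1}\varphi}_{L^p}$, which is enough since there is no $2^{j\vartheta}$ factor to absorb at $j=-1$.

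For \cref{eq:schauder2}, rather than re-doing the multiplier analysis for $e^{-t\psi^\alpha_\nu}-1$ (whose derivatives behave badly at the origin for $\alpha<2$), I would derive the bound from \cref{eq:schauder1} and \cref{prop:contfl} via the fundamental theorem of calculus $(P_t-\operatorname{Id})\varphi=-\int_0^t\La P_s\varphi\diff s$, valid for $\varphi\in C^\infty_b$. This gives
\begin{align*}
\norm{(P_t-\operatorname{Id})\varphi}_{\calC^{\beta-\vartheta}_p}\lesssim\int_0^t\norm{P_s\varphi}_{\calC^{\beta+\alpha-\vartheta}_p}\diff s\lesssim\int_0^t(s^{-(\alpha-\vartheta)/\alpha}\vee 1)\diff s\cdot\norm{\varphi}_{\calC^\beta_p},
\end{align*}
and for $\vartheta\in(0,\alpha]$ this integral is bounded by a constant multiple of $t^{\vartheta/\alpha}$ on any bounded range of times; the edge case $\vartheta=0$ follows at once from the triangle inequality together with \cref{eq:schauder1}.

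For \cref{eq:schauder-infty}, the key point is that for $\beta\in(0,1)$ the space $\calC^\beta=B^\beta_{\infty,\infty}$ coincides with the classical bounded Hölder space, giving the pointwise estimate $|\varphi(x+y)-\varphi(x)|\leqslant\norm{\varphi}_{\calC^\beta}|y|^\beta$. Since $\rho_t$ is a probability density,
\begin{align*}
|(P_t-\operatorname{Id})\varphi(x)|=\Big|\int(\varphi(x+y)-\varphi(x))\rho_t(y)\diff y\Big|\leqslant\norm{\varphi}_{\calC^\beta}\int|y|^\beta\rho_t(y)\diff y,
\end{align*}
and the scaling identity $\rho_t(y)=t^{-d/\alpha}\rho_1(t^{-1/\alpha}y)$, immediate from $e^{-t\psi^\alpha_\nu(z)}=e^{-\psi^\alpha_\nu(t^{1/\alpha}z)}$, reduces the claim to the finiteness of $\int|y|^\beta\rho_1(y)\diff y$, which follows from the super-polynomial decay of $\rho_1$ noted just before the lemma together with local integrability at the origin. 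The final extension of $P_t$ to $\calC^\beta$ follows by density of $C^\infty_b$ in $\calC^\beta$ (cf.~\cite[Remark~2.75]{Bahouri2011}) combined with \cref{eq:schauder1}. The main subtlety, as in \cref{prop:contfl}, lies in the low-frequency block $j=-1$: $\psi^\alpha_\nu$ fails to be smooth at the origin for $\alpha<2$, and the multiplier arguments that work so cleanly on annuli must be replaced by the convolution rewriting involving $\rho_t$ described above.
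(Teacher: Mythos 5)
Your strategy is more self-contained than the paper's, which simply cites \cite[Lemmas A.5, A.7, A.8]{Gubinelli2015Paracontrolled} for the ranges $\vartheta\in[0,\alpha)$ in \eqref{eq:schauder1}--\eqref{eq:schauder2} and for \eqref{eq:schauder-infty}, and only fills in the endpoint $\vartheta=\alpha$ of \eqref{eq:schauder2} via the identity $(P_t-\operatorname{Id})\varphi=\int_0^t(-\La)P_r\varphi\,dr$. Your treatments of \eqref{eq:schauder1} via Bernstein-type multiplier bounds on annuli plus a $\rho_t$-convolution argument at $j=-1$, and of \eqref{eq:schauder2} via the same Duhamel identity applied on the whole range $\vartheta\in(0,\alpha]$, are sound; they essentially re-derive the cited results rather than quoting them. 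One minor imprecision: for $\vartheta<\alpha$ the integral $\int_0^t(s^{-(\alpha-\vartheta)/\alpha}\vee 1)\,ds$ is \emph{not} $\lesssim t^{\vartheta/\alpha}$ uniformly in $t$ (for $t\geqslant 1$ it grows linearly), so you need to observe separately that for $t\geqslant 1$ the claim is trivial because $\norm{(P_t-\operatorname{Id})\varphi}_{\calC^{\beta-\vartheta}_p}\lesssim\norm{\varphi}_{\calC^\beta_p}\leqslant t^{\vartheta/\alpha}\norm{\varphi}_{\calC^\beta_p}$. You gesture at this with ``on any bounded range of times'' but do not close the loop.

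There is, however, a genuine gap in your proof of \eqref{eq:schauder-infty}. You reduce the claim to the finiteness of $\int|y|^\beta\rho_1(y)\,dy$ and justify this by invoking ``the super-polynomial decay of $\rho_1$ noted just before the lemma.'' But the remark in the paper (just before the lemma) asserts super-polynomial decay of $e^{-\psi^\alpha_\nu}=\widehat{\rho}_1$, \emph{not} of $\rho_1$ itself. For $\alpha\in(0,2)$ the density $\rho_1$ of a symmetric $\alpha$-stable law has only power-law tails, $\rho_1(y)\sim c|y|^{-d-\alpha}$ as $|y|\to\infty$; in particular it does \emph{not} decay faster than every polynomial, and $\int|y|^\beta\rho_1(y)\,dy<\infty$ holds precisely when $\beta<\alpha$. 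Your conclusion is therefore correct for $\beta\in(0,1)$ once $\alpha>1$ is in force (as it is throughout Sections 3 and 4), but the reason is the quantitative tail estimate $\rho_1(y)\lesssim(1+|y|)^{-d-\alpha}$ (which must itself be derived, e.g.\ from the behaviour of $\psi^\alpha_\nu$ near the origin and repeated integration by parts, or cited as a known fact about stable densities), not from the remark you point to, which concerns the wrong function. As written, the step ``the finiteness ... follows from the super-polynomial decay of $\rho_1$'' is false, and the fix requires an argument you have not supplied.
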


\begin{proof}
In the case $\theta\in [0,\alpha)$, this follows from~\cite[Lemma~A.5]{Gubinelli2015Paracontrolled}, see also \cite[Lemma~A.7]{Gubinelli2015Paracontrolled}, whose generalization to integrability $p\in[1,\infty]$ is immediate. For the case $\vartheta=\alpha$ in \eqref{eq:schauder2}, we estimate
\begin{align*}
\norm{(P_{t}-\operatorname{Id})\varphi}_{\calC^{\beta-\alpha}_{p}}&=\norm[\bigg]{\int_{0}^{t}(-\La)P_{r}\varphi dr}_{\calC^{\beta-\alpha}_{p}}\\&\leqslant\int_{0}^{t}\norm{(-\La)P_{r}\varphi}_{\calC^{\beta-\alpha}_{p}}dr\\&\lesssim\int_{0}^{t}\norm{P_{r}\varphi}_{\calC^{\beta}_{p}}dr\lesssim t\norm{\varphi}_{\calC^{\beta}_{p}}
\end{align*} using \cref{prop:contfl} and \eqref{eq:schauder1} for $\vartheta=0$. \eqref{eq:schauder-infty} follows from \cite[Lemma~A.8]{Gubinelli2015Paracontrolled}.
\end{proof}
\noindent The next three lemmas deal with commutators between the $(-\La)$ operator, its semigroup and the paraproduct. The proofs can be found in \cref{Appendix A}. 
\begin{lemma}\label{lem:La-comm}
Let $\alpha\in (1,2]$, $f\in \calC^{\sigma}_{p}$ and $g\in\calC^{\varsigma}$ with $\sigma\in (0,1)$ and $\varsigma\in\R$, $p\in[1,\infty]$. Then the commutator for $(-\La)$ follows:
\begin{align*}
\norm{(-\La)(f\para g)-f\para(-\La)g}_{\calC^{\sigma+\varsigma-\alpha}_{p}}\lesssim\norm{f}_{\calC^{\sigma}_{p}}\norm{g}_{\calC^{\varsigma}}.
\end{align*}
\end{lemma}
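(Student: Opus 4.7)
The strategy is a classical paradifferential commutator argument, exploiting the spectral localization of the paraproduct. Writing $f\para g = \sum_{i\geqslant 0} S_{i-1}f \cdot \Delta_i g$, the commutator splits as
$$(-\La)(f\para g) - f\para (-\La)g = \sum_{i\geqslant 0} E_i, \qquad E_i := (-\La)(S_{i-1}f\cdot \Delta_i g) - S_{i-1}f\cdot (-\La)\Delta_i g.$$
For $i\geqslant 1$ the Fourier support of $S_{i-1}f\cdot \Delta_i g$ sits in an annulus of scale $2^i$ (from the standard paraproduct spectral bound), and the same is true for $S_{i-1}f\cdot (-\La)\Delta_i g$ since $(-\La)\Delta_i g$ has the same Fourier support as $\Delta_i g$. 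Hence each $E_i$ is spectrally localized at scale $2^i$, so $\Delta_k E_i = 0$ whenever $|i-k|$ exceeds a fixed constant, and it suffices to prove the scale-wise estimate $\|E_i\|_{L^p} \lesssim 2^{i(\alpha-\sigma-\varsigma)} \|f\|_{\calC^\sigma_p}\|g\|_{\calC^\varsigma}$ and then sum over overlapping scales.

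To obtain this scale-wise bound I would choose $\chi\in C^\infty_c(\R^d\setminus\{0\})$, supported in an annulus that contains the Fourier supports of both $S_{i-1}f\cdot\Delta_i g$ and $\Delta_i g$ after rescaling by $\chi_i(\xi):=\chi(2^{-i}\xi)$. Then $\chi_i \equiv 1$ on the relevant supports, so
$$(-\La)(S_{i-1}f\cdot\Delta_i g) = K_i \ast (S_{i-1}f\cdot\Delta_i g), \qquad (-\La)\Delta_i g = K_i \ast \Delta_i g,$$
with $K_i := \F^{-1}(\chi_i\psi^\alpha_\nu)$. This yields the kernel representation
$$E_i(x) = \int K_i(x-y)\paren[\big]{ S_{i-1}f(y) - S_{i-1}f(x) } \Delta_i g(y)\diff y.$$
Expanding $S_{i-1}f(y) - S_{i-1}f(x) = -\int_0^1 \nabla S_{i-1}f(x+\tau(y-x))\cdot(x-y)\diff\tau$ and applying Minkowski's integral inequality in $L^p_x$ (using translation invariance of $L^p$) leads to
$$\|E_i\|_{L^p} \leqslant \paren[\bigg]{\int |K_i(z)||z|\diff z} \|\nabla S_{i-1}f\|_{L^p} \|\Delta_i g\|_{L^\infty}.$$

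Each factor scales as expected: by the $\alpha$-homogeneity of $\psi^\alpha_\nu$ and Schwartz regularity of $\F^{-1}(\chi\psi^\alpha_\nu)$, a change of variables gives $\int|K_i(z)||z|\diff z \lesssim 2^{i(\alpha-1)}$; Bernstein's inequality together with $\sigma\in(0,1)$ yields $\|\nabla S_{i-1}f\|_{L^p} \lesssim 2^{i(1-\sigma)}\|f\|_{\calC^\sigma_p}$; and trivially $\|\Delta_i g\|_{L^\infty} \lesssim 2^{-i\varsigma}\|g\|_{\calC^\varsigma}$. Multiplying these three contributions produces exactly the required $2^{i(\alpha-\sigma-\varsigma)}$ decay.

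The main obstacle is the low-frequency term $i=0$: the Fourier support of $\Delta_{-1}f\cdot \Delta_0 g$ contains a neighbourhood of the origin, so one cannot replace $\psi^\alpha_\nu$ by the smooth compactly supported $\chi_0\psi^\alpha_\nu$, and the approach via the kernel $K_0$ fails because of the singularity of $\psi^\alpha_\nu$ at $0$. I would handle this contribution separately by bounding $(-\La)(\Delta_{-1}f\cdot \Delta_0 g)$ and $\Delta_{-1}f\cdot(-\La)\Delta_0 g$ individually through the continuity estimate of \cref{prop:contfl} (which is precisely tailored to absorb the singularity at the origin via the Lévy-Khintchine representation) combined with the paraproduct estimates \eqref{eq:paraproduct-estimates}. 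Since this piece is purely low-frequency, any $\calC^\gamma_p$ estimate up to a loss of the constant is acceptable and can be absorbed into the final bound.
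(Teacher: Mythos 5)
Your proof is correct and takes essentially the same route as the paper's: exploit the spectral localization of the paraproduct block to replace $\psi_\nu^\alpha$ by a compactly-supported cutoff version, write the resulting commutator block as $\int K_i(x-y)(S_{i-1}f(y)-S_{i-1}f(x))\Delta_i g(y)\,\diff y$, Taylor-expand $S_{i-1}f$ to first order (which is where $\sigma<1$ enters via Bernstein), and obtain the critical factor $2^{i(\alpha-1)}$ from the $\alpha$-homogeneity of $\psi_\nu^\alpha$ in the $L^1$ bound for $|z| K_i(z)$, then close by Young/Minkowski. The paper applies $\Delta_j$ first and uses $p_j$ itself as the cutoff, whereas you introduce a separate $\chi_i$ at the $i$-th paraproduct scale and argue $\Delta_k E_i=0$ for $|i-k|$ large; these are the same estimate in slightly different bookkeeping. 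The only genuine point of departure is your low-frequency discussion: it is an artifact of your paraproduct convention $S_{i-1}f=\sum_{l\leqslant i-1}\Delta_l f$ (frequency gap one), which indeed puts the $i=0$ block $\Delta_{-1}f\,\Delta_0 g$ with spectrum reaching the origin and forces your separate treatment via \cref{prop:contfl}; with the paper's convention $S_i u = \sum_{j=-1}^{i-1}\Delta_j u$ (frequency gap two), every nonzero term $S_{i-1}f\,\Delta_i g$ for $i\geqslant 1$ already has spectrum in an annulus bounded away from $0$, and $\Delta_{-1}$ of the commutator simply vanishes — so the detour is unnecessary there. Your workaround is nonetheless correct and self-contained.
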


\begin{lemma}\label{schaudercom}
Let $(P_{t})$ be as in \cref{schauder}.
Then, for $\sigma\in (0,1)$, $\varsigma\in\R$, $p\in[1,\infty]$ and $\vartheta \geqslant -\alpha$ the following commutator estimate holds true:
\begin{align}
\norm{P_{t}(u\para v)-u\para P_{t}v}_{\calC^{\sigma+\varsigma+\vartheta}_{p}}\lesssim (t^{-\vartheta/\alpha}\vee 1)\norm{u}_{\calC^{\sigma}_{p}}\norm{v}_{\calC^{\varsigma}}.\label{eq:scom}
\end{align}
\end{lemma}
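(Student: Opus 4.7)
The plan is to reduce the semigroup--paraproduct commutator to the $(-\La)$--paraproduct commutator already controlled by \cref{lem:La-comm}, via an integral representation that exploits the semigroup structure together with the Schauder estimates from \cref{schauder}. Concretely, I introduce the auxiliary path $F(s) := P_{t-s}(u \para P_s v)$ for $s \in [0,t]$, which interpolates between $F(0) = P_t(u \para v)$ and $F(t) = u \para P_t v$. Since $P_\tau$ is a Fourier multiplier that commutes with $\La$ and satisfies $\partial_\tau P_\tau = -\La P_\tau$, a short computation gives $F'(s) = -P_{t-s}[(-\La)(u \para P_s v) - u \para (-\La) P_s v]$, and the fundamental theorem of calculus yields
\[
P_t(u \para v) - u \para P_t v = \int_0^t P_{t-s}\bigl[(-\La)(u \para P_s v) - u \para (-\La) P_s v\bigr] \diff s.
\]

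For $\vartheta \in [-\alpha, \alpha)$ I choose an auxiliary parameter $\epsilon$, taking $\epsilon = 0$ when $\vartheta \leq 0$ and $\epsilon \in (\vartheta, \alpha)$ when $\vartheta \in (0, \alpha)$. Chaining \eqref{eq:schauder1} applied to $P_{t-s}$ (to gain $\epsilon$ regularity at cost $(t-s)^{-\epsilon/\alpha}$), \cref{lem:La-comm} with auxiliary exponent $\varsigma' := \varsigma + \alpha + \vartheta - \epsilon$ (which costs the full $\alpha$ but bounds the inner bracket by $\|u\|_{\calC^\sigma_p}\|P_s v\|_{\calC^{\varsigma'}}$), and \eqref{eq:schauder1} applied to $P_s v$ (to gain $\alpha + \vartheta - \epsilon \geq 0$ at cost $s^{-(\alpha+\vartheta-\epsilon)/\alpha}$), I obtain the pointwise bound
\[
\bigl\|P_{t-s}[\,\cdots\,]\bigr\|_{\calC^{\sigma+\varsigma+\vartheta}_p} \lesssim (t-s)^{-\epsilon/\alpha}\, s^{-(\alpha+\vartheta-\epsilon)/\alpha}\, \|u\|_{\calC^\sigma_p}\|v\|_{\calC^\varsigma}.
\]
Both exponents are strictly below $1$ by the construction of $\epsilon$, so integrating in $s$ via a Beta function collapses the two factors to $t^{1 - \epsilon/\alpha - (\alpha+\vartheta-\epsilon)/\alpha} = t^{-\vartheta/\alpha}$, which is the desired estimate in this range.

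The remaining range $\vartheta \geq \alpha$ is reached by bootstrapping through the semigroup decomposition $P_t = P_{t/2}\,P_{t/2}$. Writing
\[
P_t(u \para v) - u \para P_t v = P_{t/2}\bigl[P_{t/2}(u \para v) - u \para P_{t/2} v\bigr] + \bigl[P_{t/2}(u \para w) - u \para P_{t/2} w\bigr], \qquad w := P_{t/2} v,
\]
I apply the already-proven case with a smaller regularity shift to each bracket and pay the remaining $\vartheta$-gap by one additional use of \eqref{eq:schauder1}: directly on $P_{t/2}$ for the first bracket, and on $w = P_{t/2} v$ via $\|w\|_{\calC^{\varsigma+\kappa}} \lesssim t^{-\kappa/\alpha}\|v\|_{\calC^\varsigma}$ for the second. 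Iterating covers every $\vartheta \geq -\alpha$, and the $\vee 1$ in the bound absorbs the analogous factor in \eqref{eq:schauder1} once $t$ is bounded away from zero. The main technical hurdle is the simultaneous constraint $0 \leq \epsilon \leq \alpha + \vartheta$ (so that both Schauder invocations have non-negative gain) together with $\vartheta < \epsilon < \alpha$ (for integrability at the two endpoints $s = 0$ and $s = t$); for $\vartheta \in (-\alpha, \alpha)$ such an $\epsilon$ exists, the endpoint $\vartheta = -\alpha$ is covered by $\epsilon = 0$ (the $s$-factor then being the constant $1$), and the range $\vartheta \geq \alpha$ is handled by the bootstrap.
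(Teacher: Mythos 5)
Your approach is correct and genuinely more self-contained than the paper's. The paper splits the proof into two cases: for $\vartheta \geqslant -1$ it cites an external reference (\cite{doktorp}, Lemmas 5.3.20 and 5.5.7), and for $\vartheta \in [-\alpha,-1)$ it invokes the Duhamel identity $P_t(u\para v) - u\para P_t v = \int_0^t[(-\La)P_r(u\para v) - u\para(-\La)P_r v]\,dr$ and then applies \cref{lem:a1} (which in turn presupposes the $\vartheta\geqslant 0$ case). You instead introduce the interpolating path $F(s)=P_{t-s}(u\para P_s v)$, which gives the alternative Duhamel representation with integrand $P_{t-s}[(-\La)(u\para P_s v) - u\para(-\La)P_s v]$; this lets you chain \eqref{eq:schauder1}, \cref{lem:La-comm} and \eqref{eq:schauder1} again with a tunable exponent $\epsilon$ and close the estimate with a Beta integral. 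The payoff is that your argument runs through for all $\vartheta\in[-\alpha,\alpha)$ in one pass, depending only on \cref{lem:La-comm} and semigroup Schauder estimates, with a clean $P_{t/2}\circ P_{t/2}$ bootstrap for $\vartheta\geqslant\alpha$ — no external citation and no appeal to \cref{lem:a1}, avoiding the paper's somewhat convoluted sequencing (Lemma~\ref{schaudercom} for $\vartheta\geqslant 0$, then Lemma~\ref{lem:a1}, then back to Lemma~\ref{schaudercom} for $\vartheta\in[-\alpha,-1)$).

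One small slip: your recipe ``$\epsilon=0$ when $\vartheta\leqslant0$'' breaks precisely at $\vartheta=0$, where the $s$-endpoint exponent $(\alpha+\vartheta-\epsilon)/\alpha$ equals $1$ and the Beta integral diverges. For $\vartheta<0$, $\epsilon=0$ is fine; for $\vartheta=0$ you must again take $\epsilon\in(0,\alpha)$ strictly positive. In other words, the correct strict constraint is $\epsilon>\vartheta$ (with equality permitted only when it kills the singularity, i.e.\ $\epsilon=0$ for $\vartheta<0$, or $\epsilon=\alpha+\vartheta$ for $\vartheta<0$), so the recipe should read $\epsilon=0$ for $\vartheta\in[-\alpha,0)$ and $\epsilon\in(\vartheta,\alpha)$ for $\vartheta\in[0,\alpha)$. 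Also, your remark that the $\vee 1$ ``absorbs the analogous factor once $t$ is bounded away from zero'' is accurate but could use a line of elaboration distinguishing $t\leqslant 1$ (where the Beta computation applies verbatim) from $t>1$ (where one splits the integration domain at the level sets of $(t-s)^{-\epsilon/\alpha}\vee 1$). Neither issue affects the validity of the argument.
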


\begin{lemma}\label{lem:a1}
Let $\La$ and $(P_{t})_{t\geqslant 0}$ be defined as in \cref{def:fl} and \cref{schauder} and let $\alpha\in (1,2]$. Let $T>0$, $\sigma\in (0,1)$, $\varsigma\in\R$, $p\in[1,\infty]$ and $\theta\geqslant 0$. Then the commutator on the operator $(-\La)P_{t}$ follows:
\begin{align*}
\norm{(-\La)P_{t}(u\para v)-u\para (-\La)P_{t}v}_{\calC^{\sigma+\varsigma-\alpha+\theta}_{p}}\lesssim (t^{-\theta/\alpha}\vee 1)\norm{u}_{\calC^{\sigma}_{p}}\norm{v}_{\calC^{\varsigma}}.
\end{align*}
\end{lemma}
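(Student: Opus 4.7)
The natural strategy is to split the commutator with the composite operator $(-\La)P_t$ into two pieces: one commutator between $P_t$ and the paraproduct, and one commutator between $(-\La)$ and the paraproduct. Concretely, I would write
\begin{align*}
(-\La)P_{t}(u\para v)-u\para (-\La)P_{t}v
&= (-\La)\bigl[P_{t}(u\para v)-u\para P_{t}v\bigr] \\
&\quad{}+\bigl[(-\La)(u\para P_{t}v)-u\para (-\La)P_{t}v\bigr]
\end{align*}
and estimate the two terms separately in $\calC^{\sigma+\varsigma-\alpha+\theta}_{p}$.

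For the first term, I would use the semigroup–paraproduct commutator estimate of \cref{schaudercom} at parameter $\vartheta=\theta\geqslant 0\geqslant -\alpha$ to obtain
\[
\norm{P_{t}(u\para v)-u\para P_{t}v}_{\calC^{\sigma+\varsigma+\theta}_{p}}\lesssim (t^{-\theta/\alpha}\vee 1)\norm{u}_{\calC^{\sigma}_{p}}\norm{v}_{\calC^{\varsigma}},
\]
and then apply the continuity of $(-\La)\colon \calC^{\sigma+\varsigma+\theta}_{p}\to \calC^{\sigma+\varsigma+\theta-\alpha}_{p}$ from \cref{prop:contfl}. This lands exactly in the target space with the desired bound.

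For the second term, I would apply the $(-\La)$–paraproduct commutator of \cref{lem:La-comm}, which is available since $\sigma\in(0,1)$. Using it on $u\in\calC^{\sigma}_{p}$ and $P_{t}v\in\calC^{\varsigma+\theta}$ (with $\varsigma+\theta\in\R$ admissible), one gets
\[
\norm{(-\La)(u\para P_{t}v)-u\para(-\La)P_{t}v}_{\calC^{\sigma+\varsigma+\theta-\alpha}_{p}}\lesssim \norm{u}_{\calC^{\sigma}_{p}}\norm{P_{t}v}_{\calC^{\varsigma+\theta}},
\]
and then the Schauder estimate \eqref{eq:schauder1} of \cref{schauder} provides $\norm{P_{t}v}_{\calC^{\varsigma+\theta}}\lesssim (t^{-\theta/\alpha}\vee 1)\norm{v}_{\calC^{\varsigma}}$. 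Adding the two bounds yields the claim.

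The only subtlety worth double-checking is that the regularity indices remain in the admissible ranges of the quoted lemmas after inserting $P_tv$ in place of $v$. Since \cref{lem:La-comm} permits arbitrary real $\varsigma$, and since $\theta\geqslant 0$ keeps us inside the range $\vartheta\geqslant -\alpha$ required by \cref{schaudercom}, this decomposition goes through without further conditions beyond those in the statement. The main (mild) obstacle is therefore just choosing the split so that no further regularity is lost; applying \cref{schaudercom} first and then $(-\La)$ achieves precisely this, because \cref{schaudercom} already provides the full gain of $\theta$ derivatives that we later spend in \cref{prop:contfl}.
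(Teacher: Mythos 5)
Your decomposition of $(-\La)P_t$ into the two commutators $(-\La)\circ[P_t,\,u\para\cdot]$ and $[(-\La),\,u\para\cdot]\circ P_t$, estimated via \cref{schaudercom} plus \cref{prop:contfl} and via \cref{lem:La-comm} plus \cref{schauder} respectively, is exactly the paper's proof. The argument is correct and matches the original step for step.
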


\noindent The mild formulation of the Kolmogorov equation is given by
\begin{align}
u_{t}=P_{T-t}u_{T}+\int_{t}^{T}P_{r-t}(V_{r}\cdot\nabla u_{r}-f_{r})dr=:P_{T-t}u_{T}+J^{T}(V\cdot\nabla u-f)(t).
\end{align} 
Due to the Schauder estimates, considering a singular terminal condition with $u_{T}\in\calC^{\beta +}_{p}$, we obtain that $\norm{P_{T-t}u_{T}}_{\calC^{\alpha+\beta}_{p}}$ blows up for $t\to T$ and the blow-up is of order $\gamma\in (0,1)$. 
This motivates the definition of blow-up spaces below, from which we can build the solution space in the next section.\\
\noindent For $\gamma\in (0,1)$, $T>0$ and $\overline{T}\in (0,T]$, and a Banach space $X$, let us define the blow-up space
\begin{align*}
\mathcal{M}_{\overline{T},T}^{\gamma}X:=\{u:[T-\overline{T},T)\to X\mid t\mapsto (T-t)^{\gamma}u_{t}\in C([T-\overline{T},T),X)\},
\end{align*} with $\norm{u}_{\mathcal{M}_{\overline{T},T}^{\gamma}X}:=\sup_{t\in [T-\overline{T},T)}(T-t)^{\gamma}\norm{u_t}_{X}$ and $\mathcal{M}_{\overline{T},T}^{0}X:=C([T-\overline{T},T),X)$. For $\overline{T}=T$, we use the notation $\mathcal{M}^{\gamma}_{T}X:=\mathcal{M}_{T,T}^{\gamma}X$. For $\vartheta\in (0,1]$, $\gamma\in (0,1)$, we furthermore define
\begin{align*}
C_{\overline{T},T}^{\gamma,\vartheta}X:=\biggl\{u:[T-\overline{T},T)\to X\biggm| \norm{f}_{C_{T}^{\gamma,\vartheta}X}:=\sup_{0\leqslant s<t< T}\frac{(T-t)^{\gamma}\norm{f_{t}-f_{s}}_{X}}{\abs{t-s}^{\vartheta}}<\infty\biggr\}
\end{align*} and $C_{T}^{\gamma,\vartheta}X:=C_{T,T}^{\gamma,\vartheta}X$. 
Let us also define for $\vartheta\in (0,1]$, $\overline{T}\in (0,T]$, the space of $\vartheta$-Hölder continuous functions on $[T-\overline{T},T]$ with values in $X$,
\begin{align*}
C_{\overline{T},T}^{\vartheta}X:=\biggl\{u:[T-\overline{T},T]\to X\biggm| \norm{u}_{C_{T}^{\vartheta}X}:=\sup_{T-\overline{T}\leqslant s<t\leqslant T}\frac{\norm{u_{t}-u_{s}}_{X}}{\abs{t-s}^{\vartheta}}<\infty\biggr\} 
\end{align*} and $C_{T}^{\vartheta}X:=C_{T,T}^{\vartheta}X$. We set $C_{\overline{T},T}^{0,\vartheta}X:=C^{\vartheta}([T-\overline{T},T),X)$.\\
We have the trivial estimates 
\begin{align}\label{eq:blow-up-inclusion}
\norm{u}_{\mathcal{M}_{\overline{T},T}^{\gamma_{1}}X}\leqslant\overline{T}^{\gamma_{1}-\gamma_{2}}\norm{u}_{\mathcal{M}_{\overline{T},T}^{\gamma_{2}}X},\quad \norm{u}_{C_{\overline{T},T}^{\gamma_{1},\vartheta_{1}}X}\leqslant\overline{T}^{(\gamma_{1}-\gamma_{2})+(\vartheta_{2}-\vartheta_{1})}\norm{u}_{ C_{\overline{T},T}^{\gamma_{2},\vartheta_{2}}X}
\end{align} for $0\leqslant\gamma_{2}\leqslant\gamma_{1}<1$ and $0<\vartheta_{1}\leqslant\vartheta_{2}\leqslant 1$. Moreover, we have that for a subinterval $[T-2\overline{T},T-\overline{T}]\subset [0,T]$ with $0<\overline{T}\leqslant \frac{T}{2}$,
\begin{align}\label{eq:subinterval-blow-up}
\norm{u}_{\mathcal{M}_{\overline{T},T-\overline{T}}^{0}X}\leqslant\overline{T}^{-\gamma}\norm{u}_{\mathcal{M}_{T}^{\gamma}X}.
\end{align}
\end{section}

\begin{section}{Schauder theory and commutator estimates for blow-up spaces}\label{section:singular-gen-eq}
In this section, we define the solution space $\mathcal{L}_{T}^{\gamma,\alpha+\beta}$ and prove Schauder and commutator estimates. We conclude the section with interpolation estimates for the solution spaces.\\ 

\noindent Heuristically, the solution space shall combine maximal space regularity (i.e. $\alpha+\beta$) in a time-blow-up space with maximal time regularity (i.e. Lipschitz) in a space of low space regularity. By interpolation, the solution will then also admit all time and space regularities ``in between".\\
Let us thus define for $\gamma\in (0,1)$ and $\theta\in \R$, $p\in[1,\infty]$, the space
\begin{align}\label{eq:function-space}
\mathcal{L}_{T}^{\gamma,\theta}:=\mathcal{M}_{T}^{\gamma}\calC^{\theta}_{p}\cap C_{T}^{1-\gamma}\calC^{\theta-\alpha}_{p}\cap C_{T}^{\gamma,1}\calC^{\theta-\alpha}_{p}.
\end{align} 
We moreover define for $\gamma=0$, 
\begin{align}\label{eq:gamma0}
\mathcal{L}_{T}^{0,\theta}:= C_{T}^{1}\calC^{\theta-\alpha}_{p}\cap C_{T}\calC^{\theta}_{p},
\end{align} where $C^{1}_{T}X$ denotes the space of $1$-Hölder or Lipschitz functions with values in $X$.\\ 
For $\overline{T}\in (0,T)$, we define $\mathcal{L}_{\overline{T},T}^{\gamma,\theta}:=\mathcal{M}_{\overline{T},T}^{\gamma}\calC^{\theta}_{p}\cap C_{\overline{T},T}^{1-\gamma}\calC^{\theta-\alpha}_{p}\cap C_{\overline{T},T}^{\gamma,1}\calC^{\theta-\alpha}_{p}$ and similarly $\mathcal{L}_{\overline{T},T}^{0,\theta}$.\\
The spaces $\mathcal{L}_{T}^{\gamma,\theta}$ are Banach spaces equipped with the norm
\begin{align*}
\norm{u}_{\mathcal{L}_{T}^{\gamma,\theta}}&:=\norm{u}_{\mathcal{M}_{T}^{\gamma}\calC^{\theta}_{p}}+\norm{u}_{C_{T}^{1-\gamma}\calC^{\theta-\alpha}_{p}} +\norm{u}_{C_{T}^{\gamma,1}\calC^{\theta-\alpha}_{p}}\\&=\squeeze[1]{\sup_{t\in[0,T)}(T-t)^{\gamma}\norm{u_{t}}_{\calC^{\theta}_{p}}+\sup_{0\leqslant s<t\leqslant T}\frac{\norm{u_{t}-u_{s}}_{\calC^{\theta-\alpha}_{p}}}{\abs{t-s}^{1-\gamma}}+\sup_{0\leqslant s<t< T}\frac{(T-t)^{\gamma}\norm{u_{t}-u_{s}}_{\calC^{\theta-\alpha}_{p}}}{\abs{t-s}}.}
\end{align*}
Notice, that $u\in\mathcal{L}_{T}^{\gamma,\theta}$ in particular implies that $t\mapsto\norm{u_{t}}_{\calC^{\theta-\alpha}}$ is $(1-\gamma)$-Hölder continuous at $t=T$.\\ 
\noindent The next corollary proves estimates for the semigroup $(P_{t})$ of $(-\La)$ acting on the spaces $\mathcal{L}_{T}^{\gamma,\theta}$. 
We will need the following auxillary lemma. In particular, the lemma can be applied, to show that the inverse fractional Laplacian improves space regularity by $\alpha$ (and not only by $\theta<\alpha$). It is a slight generalization of \cite[Lemma A.9, (A.1)]{Gubinelli2015Paracontrolled}. Its proof can be found in \cref{Appendix A}.
\begin{lemma}\label{lem:maxreg}
Let $\sigma\in\R$, $p\in [1,\infty]$, $\gamma\in[0,1)$, $\epsilon\in (0,1)$ and $\varsigma\geqslant 0$. Let moreover $f:\mathring{\Delta}_{T}\to\mathcal{S}'$, $\mathring{\Delta}_{T}:=\{(t,r)\in[0,T]^{2}\mid t<r\}$, be such that there exists $C>0$ such that for all $j\geqslant -1$ and $0\leqslant t< r\leqslant T$, for the Littlewood-Paley blocks holds
\begin{align*}
\norm{\Delta_{j}f_{t,r}}_{L^{p}}\leqslant C(T-r)^{-\gamma}\min(2^{-j\sigma},2^{-j(\sigma+\varsigma+\epsilon\varsigma)}(r-t)^{-(1+\epsilon)}).
\end{align*}
Then it follows that for all $t\in[0,T]$
\begin{align}
\norm[\bigg]{\int_{t}^{T}f_{t,r}dr}_{\calC^{\sigma+\varsigma}_{p}}\leqslant [2C \max(\epsilon^{-1},(1-\gamma)^{-1})](T-t)^{-\gamma}.
\end{align} 
\end{lemma}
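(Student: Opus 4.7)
The plan is to reduce the claim to an integral estimate for the Littlewood-Paley blocks of the integrand and then to carry out a frequency-dependent split of the time integral. Because the Littlewood-Paley projector $\Delta_j$ commutes with integration in $r$, one has
\[
\Delta_j \int_t^T f_{t,r}\,dr = \int_t^T \Delta_j f_{t,r}\,dr,
\]
so by the triangle inequality in $L^p$ and the definition of $\calC^{\sigma+\varsigma}_p$ it suffices to show that
\[
\sup_{j\geqslant -1} 2^{j(\sigma+\varsigma)} \int_t^T \|\Delta_j f_{t,r}\|_{L^p}\,dr \lesssim (T-t)^{-\gamma},
\]
with the constant claimed in the statement. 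Setting $a_j := 2^{-j\varsigma}$ and factoring out $C\,2^{-j\sigma}$ from the hypothesis, the task reduces to proving the purely scalar estimate
\[
I_j(t) := \int_t^T (T-r)^{-\gamma}\,\min\bigl(1,\,a_j^{1+\epsilon}(r-t)^{-(1+\epsilon)}\bigr)\,dr \;\lesssim\; a_j\,(T-t)^{-\gamma},
\]
because then $2^{j(\sigma+\varsigma)}\cdot C\cdot 2^{-j\sigma}\cdot a_j = C$, which is $j$-independent.

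To bound $I_j(t)$ I would split the integration interval at the midpoint $r_* = (t+T)/2$. On $[t,r_*]$ the factor $(T-r)^{-\gamma}$ is comparable to $(T-t)^{-\gamma}$, namely $(T-r)^{-\gamma}\leqslant 2^{\gamma}(T-t)^{-\gamma}$, and the remaining integral is absorbed into the explicit one-variable computation
\[
\int_0^{\infty}\min\bigl(1,a_j^{1+\epsilon}u^{-(1+\epsilon)}\bigr)\,du \;=\; a_j + \int_{a_j}^{\infty} a_j^{1+\epsilon}u^{-(1+\epsilon)}\,du \;=\; a_j\bigl(1+\epsilon^{-1}\bigr),
\]
giving a contribution of order $\epsilon^{-1} a_j (T-t)^{-\gamma}$. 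On $[r_*,T]$ the factor $(r-t)^{-(1+\epsilon)}$ is frozen to at most $2^{1+\epsilon}(T-t)^{-(1+\epsilon)}$, so the $\min$ becomes a constant $M \leqslant \min(1, (2a_j/(T-t))^{1+\epsilon})$ that can be pulled out. Using the elementary fact $\min(1,x^{1+\epsilon})\leqslant x$ for $x\in[0,1]$ in the case $2a_j\leqslant T-t$, and the trivial bound $T-t\leqslant 2a_j$ with $M\leqslant 1$ in the complementary case, one concludes $M\cdot\int_{r_*}^{T}(T-r)^{-\gamma}dr \lesssim a_j(T-t)^{-\gamma}/(1-\gamma)$, which is of the desired form.

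Combining the two halves yields $I_j(t)\lesssim \max(\epsilon^{-1},(1-\gamma)^{-1})\,a_j\,(T-t)^{-\gamma}$ uniformly in $j\geqslant -1$, and multiplying by $C\cdot 2^{j(\sigma+\varsigma)}\cdot 2^{-j\sigma}$ gives the Besov estimate claimed, completing the proof. The only genuinely delicate point is to arrange the split in such a way that the numerical prefactor matches the sharp constant $2C\max(\epsilon^{-1},(1-\gamma)^{-1})$ stated in the lemma rather than a larger multiple; a crude midpoint split gives a constant of a few units, but with a slightly more careful book-keeping one recovers the stated value. Everything else is routine: the two regimes in the $\min$ encode the ``low-frequency/large-time'' and ``high-frequency/small-time'' behaviour, and the midpoint split isolates the singularity of $(T-r)^{-\gamma}$ at $r=T$ from the small-time singularity of $(r-t)^{-(1+\epsilon)}$ at $r=t$.
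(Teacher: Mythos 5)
Your argument is correct, but it departs from the paper's proof in a way worth noting. You split the time interval at the fixed midpoint $(t+T)/2$, isolating the $(r-t)^{-(1+\epsilon)}$ singularity on the first half (where $(T-r)^{-\gamma}$ is bounded) and the $(T-r)^{-\gamma}$ singularity on the second half (where $(r-t)^{-(1+\epsilon)}$ is bounded), and handle the first half via the scale-invariant integral $\int_0^\infty \min(1, a^{1+\epsilon}u^{-(1+\epsilon)})\,du = a(1+\epsilon^{-1})$. The paper instead splits at $t+\delta$ with a \emph{frequency-dependent} cutoff $\delta = 2^{-j\varsigma}$, uses the crude branch $\|\Delta_j f_{t,r}\|_{L^p}\leqslant C2^{-j\sigma}(T-r)^{-\gamma}$ on $[t,t+\delta]$ and the sharp branch on $[t+\delta,T]$, and then evaluates $\int_{\delta/(T-t)}^1(1-r)^{-\gamma}r^{-(1+\epsilon)}\,dr$ with a secondary split at $1/2$. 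Both work; yours is slightly more streamlined because the two singularities are separated geometrically from the start, whereas the paper's choice mirrors the standard Schauder-estimate proof scheme (cf.\ Lemma A.9 in Gubinelli--Imkeller--Perkowski) where the dyadic scale and the time increment are matched. On the numerical prefactor: your midpoint split, followed literally, yields a constant of roughly $6C\max(\epsilon^{-1},(1-\gamma)^{-1})$ rather than the stated $2C\max(\epsilon^{-1},(1-\gamma)^{-1})$; you flag this yourself. It is worth observing that the paper's own proof, summed naively, produces $[2\max(\epsilon^{-1},(1-\gamma)^{-1}) + (1-\gamma)^{-1}]C \leqslant 3C\max(\epsilon^{-1},(1-\gamma)^{-1})$, so neither proof delivers the displayed constant exactly, and for how the lemma is used (as a qualitative maximal-regularity bound) the precise prefactor is immaterial. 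Your proof is sound.
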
 
\begin{corollary}[Schauder estimates]\label{cor:schauder}
Let $(P_{t})$ and $\nu$ be as in \cref{schauder}. Let $T>0$, $\overline{T}\in (0,T]$. For $t\in[T-\overline{T},T]$ we define $J^{T}v (t)=J^{T}(v) (t):=\int_{t}^{T}P_{r-t}v(r)dr$. Then we have for $\beta\in\R$, $\vartheta\in[0,\alpha]$, $\gamma\in [\vartheta/\alpha,1]$,
\begin{align}\label{j1}
\norm{P_{T-\cdot}w}_{\mathcal{L}_{\overline{T},T}^{\gamma,\beta+\vartheta}}\lesssim \overline{T}^{(\gamma\alpha-\vartheta)/\alpha} \norm{w}_{\calC^{\beta}_{p}}
\end{align} 
and for $0\leqslant\gamma'\leqslant\gamma< 1$,
\begin{align}\label{j3}
\norm{J^{T}v}_{\mathcal{L}_{\overline{T},T}^{\gamma,\beta+\alpha}}\lesssim \overline{T}^{\gamma-\gamma'} \norm{v}_{\mathcal{M}_{\overline{T},T}^{\gamma'}\calC^{\beta}_{p}}.
\end{align} 
\end{corollary}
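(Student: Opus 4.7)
In each case the strategy is to bound separately the three seminorms constituting $\|\cdot\|_{\mathcal L^{\gamma,\theta}_{\overline T,T}}$.

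For \eqref{j1}, the blow-up seminorm is immediate from \eqref{eq:schauder1}, giving $(T-t)^\gamma\|P_{T-t}w\|_{\calC^{\beta+\vartheta}_p}\lesssim(T-t)^{\gamma-\vartheta/\alpha}\|w\|_{\calC^\beta_p}\le\overline T^{\gamma-\vartheta/\alpha}\|w\|_{\calC^\beta_p}$, using $\gamma\ge\vartheta/\alpha$. For the $C^{1-\gamma}_{\overline T,T}$ seminorm at level $\calC^{\beta+\vartheta-\alpha}_p$, I would write $P_{T-t}w-P_{T-s}w=P_{T-t}(\operatorname{Id}-P_{t-s})w$ for $s<t$ and apply \eqref{eq:schauder2} with exponent $\alpha-\vartheta\in[0,\alpha]$ followed by the bound \eqref{eq:schauder1} at $\vartheta=0$; the resulting factor $(t-s)^{1-\vartheta/\alpha}$ is rewritten as $(t-s)^{1-\gamma}\overline T^{\gamma-\vartheta/\alpha}$. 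For the $C^{\gamma,1}_{\overline T,T}$ seminorm I would instead use the identity $P_{T-t}w-P_{T-s}w=-\int_s^t(-\La)P_{T-r}w\,dr$ together with \cref{prop:contfl} and \eqref{eq:schauder1}, giving $(t-s)(T-t)^{-\vartheta/\alpha}\|w\|_{\calC^\beta_p}$, which yields the target after multiplication by $(T-t)^\gamma/(t-s)$.

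For \eqref{j3} the crux is bounding $\sup_t(T-t)^\gamma\|J^Tv(t)\|_{\calC^{\beta+\alpha}_p}$, since the pointwise Schauder bound $\|P_{r-t}v(r)\|_{\calC^{\beta+\alpha}_p}\lesssim(r-t)^{-1}\|v(r)\|_{\calC^\beta_p}$ is not integrable at $r=t$. Here I invoke \cref{lem:maxreg} with $\sigma=\beta$, $\varsigma=\alpha$ and $f_{t,r}=P_{r-t}v(r)$, verifying its hypothesis by combining the trivial contraction $\|P_{r-t}\Delta_jv(r)\|_{L^p}\le\|\Delta_jv(r)\|_{L^p}\lesssim 2^{-j\beta}(T-r)^{-\gamma'}\|v\|_{\mathcal M^{\gamma'}_{\overline T,T}\calC^\beta_p}$ with a polynomial-decay bound of the form $\|P_{r-t}\Delta_jv(r)\|_{L^p}\lesssim((r-t)2^{j\alpha})^{-(1+\epsilon)}\|\Delta_jv(r)\|_{L^p}$ valid for $j\ge0$ and arbitrary small $\epsilon>0$. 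The latter is deduced from \eqref{eq:schauder1} at regularity $(1+\epsilon)\alpha$ and Bernstein's inequality, using that $\widehat{\Delta_jv(r)}$ is supported in a dyadic annulus of scale $2^j$; for $j=-1$ only the trivial bound is used, and it is still accommodated by the $\min$ in \cref{lem:maxreg}. This produces $\|J^Tv(t)\|_{\calC^{\beta+\alpha}_p}\lesssim(T-t)^{-\gamma'}\|v\|_{\mathcal M^{\gamma'}_{\overline T,T}\calC^\beta_p}$ and hence the desired $\overline T^{\gamma-\gamma'}$ after multiplication by $(T-t)^\gamma$.

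The two Hölder seminorms of $J^Tv$ at level $\calC^\beta_p$ then follow by differentiating the mild formula, yielding $\partial_tJ^Tv(t)=-v(t)-(-\La)J^Tv(t)$: integrating and applying \cref{prop:contfl} together with the blow-up bound just proved gives $\|J^Tv(t)-J^Tv(s)\|_{\calC^\beta_p}\lesssim\int_s^t(T-r)^{-\gamma'}dr\,\|v\|_{\mathcal M^{\gamma'}_{\overline T,T}\calC^\beta_p}$. For the $C^{\gamma,1}_{\overline T,T}$ seminorm I bound this integral crudely by $(t-s)(T-t)^{-\gamma'}$, which after division by $(t-s)$ and multiplication by $(T-t)^\gamma$ produces $\overline T^{\gamma-\gamma'}$. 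For the $C^{1-\gamma}_{\overline T,T}$ seminorm I instead use the subadditivity inequality $|(T-s)^{1-\gamma'}-(T-t)^{1-\gamma'}|\le(t-s)^{1-\gamma'}$ to obtain $\int_s^t(T-r)^{-\gamma'}dr\lesssim(t-s)^{1-\gamma'}$, and then interpolate $(t-s)^{1-\gamma'}\le(t-s)^{1-\gamma}\overline T^{\gamma-\gamma'}$. The main technical point is the polynomial-decay estimate on a single Littlewood--Paley block of $P_{r-t}$, needed to set up \cref{lem:maxreg}; everything else reduces to bookkeeping using \eqref{eq:blow-up-inclusion} and the already established semigroup and Laplacian estimates.
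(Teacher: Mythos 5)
Your proof of \eqref{j1} is correct and coincides with the paper's: the $\mathcal M^\gamma$ bound from \eqref{eq:schauder1}, the $C^{1-\gamma}$ bound from $P_{T-t}(\operatorname{Id}-P_{t-s})w$ with \eqref{eq:schauder2}, and the $C^{\gamma,1}$ bound via $-\int_s^t(-\La)P_{T-r}w\,dr$ — the last of these is exactly how the paper establishes \eqref{eq:schauder2} at $\vartheta=\alpha$, so it is the same idea unfolded. For \eqref{j3} you take a genuinely different route for the time regularity. The paper decomposes $J^Tv(t)-J^Tv(s)=-\int_s^t P_{r-s}v_r\,dr-(P_{t-s}-\operatorname{Id})J^Tv(t)$ and has to estimate the second term twice: once with \eqref{eq:schauder2} at $\vartheta=\alpha$, and once at $\vartheta=(1-\gamma)\alpha$ together with a Beta-function integral $\int_t^T(T-r)^{-\gamma'}(r-t)^{\gamma-1}\,dr$, which forces a separate treatment of $\gamma=0$. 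You instead differentiate the mild formula to get $J^Tv(t)-J^Tv(s)=-\int_s^t v_r\,dr+\int_s^t\La J^Tv(r)\,dr$ (algebraically equivalent to the paper's identity after a Fubini argument) and then feed the already-established blow-up bound $\|\La J^Tv(r)\|_{\calC^\beta_p}\lesssim\|J^Tv(r)\|_{\calC^{\beta+\alpha}_p}\lesssim(T-r)^{-\gamma'}\|v\|$ into the second piece, so that both pieces reduce to the single integral $\int_s^t(T-r)^{-\gamma'}\,dr$, which you then bound two ways (by $(t-s)(T-t)^{-\gamma'}$ and by $(t-s)^{1-\gamma'}$, the latter via subadditivity). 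This is slightly cleaner: it avoids the interpolated Schauder step and the Beta integral, and treats $\gamma=0$ and $\gamma>0$ uniformly. Your verification of the hypothesis of \cref{lem:maxreg} (trivial $L^p$-boundedness of $P_{r-t}$ together with a polynomial-decay Bernstein estimate on a single Littlewood--Paley block) is correct and is actually more explicit than in the paper, which simply invokes the lemma.
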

\begin{proof}
For \eqref{j1} we only prove the estimate in $C_{\overline{T},T}^{1-\gamma}\calC_{p}^{\beta+\vartheta-\alpha}$ and in $C_{\overline{T},T}^{\gamma,1}\calC_{p}^{\beta+\vartheta-\alpha}$, the estimate in $\mathcal{M}^{\gamma}_{\overline{T},T}\calC_{p}^{\beta+\vartheta}$ follows from a direct application of \cref{schauder}.\\
Therefore we write $P_{T-t}w-P_{T-s}w=P_{T-t}(\operatorname{Id}-P_{t-s})w$ for $T-\overline{T}\leqslant s<t\leqslant T$ and use \cref{schauder} to conclude
\begin{align*}
\norm{P_{T-t}w-P_{T-s}w}_{\calC_{p}^{\beta+\vartheta-\alpha}}\lesssim \norm{(\operatorname{Id}-P_{t-s})w}_{\calC_{p}^{\beta+\vartheta-\alpha}}&\lesssim (t-s)^{1-\vartheta/\alpha}\norm{w}_{\calC_{p}^{\beta}}\\&\lesssim \overline{T}^{(\gamma\alpha-\vartheta)/\alpha}(t-s)^{1-\gamma}\norm{w}_{\calC_{p}^{\beta}}
\end{align*} using $0\leqslant\vartheta\leqslant\alpha$ and $\gamma\geqslant\vartheta/\alpha$. This controls $\norm{P_{T-\cdot}w}_{C_{\overline{T},T}^{1-\gamma}\calC_{p}^{\beta+\gamma-\alpha}}$. To bound the norm $\norm{P_{T-\cdot}w}_{C_{\overline{T},T}^{\gamma, 1}\calC_{p}^{\beta+\gamma-\alpha}}$, we note that 
\begin{align*}
\norm{P_{T-t}w-P_{T-s}w}_{\calC_{p}^{\beta+\vartheta-\alpha}}&\lesssim (T-t)^{-\vartheta/\alpha}\norm{(\operatorname{Id}-P_{t-s})w}_{\calC_{p}^{\beta-\alpha}}\\&\lesssim (T-t)^{-\vartheta/\alpha}(t-s)\norm{w}_{\calC_{p}^{\beta}}\\&\lesssim \overline{T}^{(\gamma\alpha-\vartheta)/\alpha}(T-t)^{-\gamma}(t-s)\norm{w}_{\calC_{p}^{\beta}}.
\end{align*}
To estimate the $\mathcal{M}^{\gamma}_{\overline{T},T}\calC_{p}^{\beta+\alpha}$-norm in \eqref{j3}, we use \cref{lem:maxreg} with $f_{t,r}=P_{r-t}v_{r}$ and $\sigma=\beta$, $\varsigma=\alpha$, to obtain for $t\in[T-\overline{T},T]$ 
\begin{align*}
(T-t)^{\gamma}\norm{J^{T}v(t)}_{\calC_{p}^{\beta+\alpha}}=(T-t)^{\gamma-\gamma'}(T-t)^{\gamma'}\norm[\bigg]{\int_{t}^{T}P_{r-t}v_{r}dr}_{\calC_{p}^{\beta+\alpha}}
\lesssim \overline{T}^{\gamma-\gamma'}\norm{v}_{\mathcal{M}_{\overline{T},T}^{\gamma'}\calC_{p}^{\beta}}.
\end{align*} 
To prove the bounds on the time regularity in \eqref{j3} we write
\begin{align*}
J^{T}(v)_{t}-J^{T}(v)_{s}=\int_{s}^{t}P_{r-s}v_{r}dr-(P_{t-s}-\operatorname{Id})\paren[\bigg]{\int_{t}^{T}P_{r-t}v_{r}dr},
\end{align*} for $T-\overline{T}\leqslant s< t\leqslant T$. We can estimate by \cref{schauder}
\begin{align*}
\norm[\bigg]{\int_{s}^{t}P_{r-s}v_{r}dr}_{\calC^{\beta}_{p}}&\leqslant\int_{s}^{t}\norm{P_{r-s}v_{r}}_{\calC^{\beta}_{p}}dr\\&\lesssim \norm{v}_{\mathcal{M}^{\gamma'}_{\overline{T},T}\calC_{p}^{\beta}}\int_{s}^{t}\abs{T-r}^{-\gamma'}dr\\&\lesssim\norm{v}_{\mathcal{M}^{\gamma'}_{\overline{T},T}\calC_{p}^{\beta}}(\abs{T-s}^{1-\gamma'}-\abs{T-t}^{1-\gamma'})\\&\leqslant\overline{T}^{\gamma-\gamma'}\abs{t-s}^{1-\gamma}\norm{v}_{\mathcal{M}^{\gamma'}_{\overline{T},T}\calC_{p}^{\beta}},
\end{align*} using that $0\leqslant\gamma'\leqslant\gamma<1$ and the estimate 
\begin{align*}
\abs{T-t}^{1-\gamma'}-\abs{T-s}^{1-\gamma'}\leqslant \abs{t-s}^{1-\gamma'}\leqslant\overline{T}^{\gamma-\gamma'}\abs{t-s}^{1-\gamma}.
\end{align*} 
On the other hand, we can also estimate that term by
\begin{align*}
\norm[\bigg]{\int_{s}^{t}\!P_{r-s}v_{r}dr}_{\calC^{\beta}_{p}}&\lesssim \norm{v}_{\mathcal{M}^{\gamma'}_{\overline{T},T}\calC_{p}^{\beta}}\int_{s}^{t}\!\abs{T-r}^{-\gamma'}dr\\&\leqslant \norm{v}_{\mathcal{M}^{\gamma'}_{\overline{T},T}\calC_{p}^{\beta}}\abs{T-t}^{-\gamma'}\int_{s}^{t}\!dr
\\&\leqslant \overline{T}^{\gamma-\gamma'}\norm{v}_{\mathcal{M}^{\gamma'}_{\overline{T},T}\calC_{p}^{\beta}}\abs{T-t}^{-\gamma}\abs{t-s}.
\end{align*}
Moreover, by \cref{schauder} for $\vartheta=\alpha$ and \cref{lem:maxreg}, we obtain that
\begin{align*}
\norm[\bigg]{(P_{t-s}-\operatorname{Id})\paren[\bigg]{\int_{t}^{T}P_{r-t}v_{r}dr}}_{\calC^{\beta}_{p}}&\lesssim\abs{t-s}\norm[\bigg]{\int_{t}^{T}P_{r-t}v_{r}dr}_{\calC^{\beta+\alpha}_{p}}\\&\lesssim\abs{t-s}\norm{v}_{\mathcal{M}^{\gamma'}_{\overline{T},T}\calC_{p}^{\beta}}(T-t)^{-\gamma'}\\&\lesssim \abs{t-s}\overline{T}^{\gamma-\gamma'}\norm{v}_{\mathcal{M}^{\gamma'}_{\overline{T},T}\calC_{p}^{\beta}}(T-t)^{-\gamma},
\end{align*} and on the other hand we can estimate by \cref{schauder} for $\vartheta=(1-\gamma)\alpha$,
\begin{align*}
\MoveEqLeft
\norm[\bigg]{(P_{t-s}-\operatorname{Id})\paren[\bigg]{\int_{t}^{T}P_{r-t}v_{r}dr}}_{\calC^{\beta}_{p}}\\&\lesssim\abs{t-s}^{(\alpha-\gamma\alpha)/\alpha}\norm[\bigg]{\int_{t}^{T}P_{r-t}v_{r}dr}_{\calC^{\beta+\alpha-\gamma\alpha}_{p}}\\&\lesssim\abs{t-s}^{(\alpha-\gamma\alpha)/\alpha}\norm{v}_{\mathcal{M}^{\gamma'}_{\overline{T},T}\calC_{p}^{\beta}}\int_{t}^{T}(T-r)^{-\gamma'}(t-r)^{(\gamma\alpha-\alpha)/\alpha}dr\\&\lesssim\abs{t-s}^{1-\gamma}\norm{v}_{\mathcal{M}^{\gamma'}_{\overline{T},T}\calC_{p}^{\beta}}(T-t)^{\gamma-\gamma'}\\&\lesssim\abs{t-s}^{1-\gamma}\norm{v}_{\mathcal{M}^{\gamma'}_{\overline{T},T}\calC_{p}^{\beta}}\overline{T}^{\gamma-\gamma'},
\end{align*} where we used that $\gamma>0$ and that $\gamma'\leqslant\gamma <1$ (if $\gamma=0$, we can use the previous estimate instead).
\end{proof}

\begin{remark}
A less general approach for dealing with singular initial conditions in paracontrolled equations was developed in \cite{Gubinelli2017KPZ}. The function spaces above \eqref{eq:function-space} seem more flexible, and actually there is a mistake in the singular Schauder estimates in \cite[Lemma 6.6]{Gubinelli2017KPZ}: Equation (49) therein is only true for $\beta \in (0,2-\alpha)$, i.e. only for distributional initial conditions\footnote{We thank Ruhong Jin for pointing out this mistake.}, and $\beta \in (-\alpha,0)$ would force $u_0=0$.
\end{remark}

\noindent Next, we prove a commutator estimate for the $J^{T}$-operator and the paraproduct. 
\begin{lemma}[Commutator estimates]\label{lem:sharp}
Let $T>0$ and $\overline{T}\in (0,T]$ and let $\varsigma\in\R$, $\sigma\in (0,1)$ and $p\in[1,\infty]$. Let $\alpha\in (1,2]$ and $\gamma\in[0,1)$. Then for $u\in\calC^{\sigma}_{p}$, $v\in\calC^{\varsigma}$
the following semigroup commutator estimate holds
\begin{align}\label{eq:c1}
\norm{t\mapsto P_{T-t}( u\para v)- u\para P_{T-t}(v)}_{\mathcal{L}_{T}^{\gamma,\sigma+\varsigma+\gamma\alpha}}\lesssim \norm{u}_{\calC^{\sigma}_{p}}\norm{v}_{\calC^{\varsigma}}.
\end{align}
Furthermore, for  $g\in \mathcal{L}_{\overline{T},T}^{\gamma',\sigma}$ with $0\leqslant\gamma'\leqslant\gamma <1$ and $h\in \CTcalC^{\varsigma}$, we have
\begin{align}\label{eq:c3}
\norm{J^{T}( g\para h)- g\para J^{T}(h)}_{\mathcal{L}_{\overline{T},T}^{\gamma,\sigma+\varsigma+\alpha}}\lesssim \overline{T}^{\gamma-\gamma'}\norm{g}_{\mathcal{L}_{\overline{T},T}^{\gamma',\sigma}}\norm{h}_{C_{T}\mathcal{C}^{\varsigma}}.
\end{align} 
\end{lemma}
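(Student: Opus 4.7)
For \eqref{eq:c1}, I would bound each of the three norms comprising $\mathcal{L}^{\gamma,\sigma+\varsigma+\gamma\alpha}_T$ separately. Setting $f(t):=P_{T-t}(u\para v)-u\para P_{T-t}v$, the $\mathcal{M}^\gamma_T\calC^{\sigma+\varsigma+\gamma\alpha}_p$-bound follows directly from \cref{schaudercom} applied with $\vartheta=\gamma\alpha$. For both Hölder-in-time norms, use $\partial_r P_{T-r}=\La P_{T-r}$ to write
\[
f(t)-f(s)=-\int_s^t\bigl[(-\La)P_{T-r}(u\para v)-u\para(-\La)P_{T-r}v\bigr]\diff r,
\]
apply \cref{lem:a1} with $\theta=\gamma\alpha$ to the integrand to obtain a $\calC^{\sigma+\varsigma+\gamma\alpha-\alpha}_p$-bound of order $(T-r)^{-\gamma}\|u\|_{\calC^\sigma_p}\|v\|_{\calC^\varsigma}$, and then estimate $\int_s^t(T-r)^{-\gamma}\diff r$ in two ways, either by $|t-s|^{1-\gamma}$ (yielding the $C_T^{1-\gamma}$-norm) or by $(T-t)^{-\gamma}|t-s|$ (yielding the $C_T^{\gamma,1}$-norm).

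For \eqref{eq:c3}, I would decompose
\[
F(t):=J^T(g\para h)(t)-g_t\para J^T(h)(t)=F_1(t)+F_2(t),
\]
where $F_1(t)=\int_t^T[P_{r-t}(g_r\para h_r)-g_r\para P_{r-t}h_r]\diff r$ is the pure semigroup-commutator piece and $F_2(t)=\int_t^T(g_r-g_t)\para P_{r-t}h_r\diff r$ captures the time increments of $g$. The $\mathcal{M}^\gamma_{\overline{T},T}\calC^{\sigma+\varsigma+\alpha}_p$-bound on $F_1$ is obtained from \cref{lem:maxreg} with the lemma's parameters $\sigma_{\mathrm{lem}}=\sigma+\varsigma$ and $\varsigma_{\mathrm{lem}}=\alpha$, feeding in the two Littlewood-Paley block estimates supplied by \cref{schaudercom} at $\vartheta=0$ and $\vartheta=\alpha(1+\epsilon)$ combined with $\|g_r\|_{\calC^\sigma_p}\leqslant(T-r)^{-\gamma'}\|g\|_{\mathcal{M}^{\gamma'}_{\overline{T},T}\calC^\sigma_p}$. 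For $F_2$, since $\sigma-\alpha<0$, I would combine the paraproduct estimate \eqref{eq:paraproduct-estimates}, the Schauder gain of $P_{r-t}$, and the bound $\|g_r-g_t\|_{\calC^{\sigma-\alpha}_p}\lesssim(T-r)^{-\gamma'}(r-t)\|g\|_{C^{\gamma',1}_{\overline{T},T}\calC^{\sigma-\alpha}_p}$ to produce the analogous block estimates, then invoke \cref{lem:maxreg} again. Both parts yield $\|F_i(t)\|_{\calC^{\sigma+\varsigma+\alpha}_p}\lesssim(T-t)^{-\gamma'}\|g\|_{\mathcal{L}^{\gamma',\sigma}_{\overline{T},T}}\|h\|_{\CTcalC^\varsigma}$, so that multiplication by $(T-t)^{\gamma}$ produces the advertised $\overline{T}^{\gamma-\gamma'}$ prefactor.

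For the time regularity of $F$, I would mimic the decomposition used in the proof of \cref{cor:schauder}. Writing $A(t):=J^T(g\para h)(t)$ and combining the identity $A(t)-A(s)=(\operatorname{Id}-P_{t-s})A(t)-\int_s^t P_{r-s}(g_r\para h_r)\diff r$ with the analogous identity for $J^T(h)(t)-J^T(h)(s)$ and then using $A(t)=g_t\para J^T(h)(t)+F(t)$ leads to
\[
F(t)-F(s)=(\operatorname{Id}-P_{t-s})F(t)-\bigl[P_{t-s}(g_t\para J^T(h)(t))-g_t\para P_{t-s}J^T(h)(t)\bigr]-\int_s^t\!\bigl[P_{r-s}(g_r\para h_r)-g_t\para P_{r-s}h_r\bigr]\diff r+(g_t-g_s)\para J^T(h)(s).
\]
The short-time integral is controlled by the same \cref{lem:maxreg}-machinery as in the $\mathcal{M}^\gamma$-bound, producing in parallel the factors $|t-s|^{1-\gamma}$ and $(T-t)^{-\gamma}|t-s|$; the semigroup--paraproduct commutator term is handled by \cref{schaudercom} (e.g.\ with $\vartheta=-\alpha$); and $(g_t-g_s)\para J^T(h)(s)$ is bounded via \eqref{eq:paraproduct-estimates} using the time regularity of $g$ and $\|J^T(h)(s)\|_{\calC^{\varsigma+\alpha}}\lesssim\|h\|_{\CTcalC^\varsigma}$, which follows from \cref{cor:schauder} applied with $\gamma=\gamma'=0$. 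The main obstacle is the $(\operatorname{Id}-P_{t-s})F(t)$-piece: the direct Schauder bound $(t-s)\|F(t)\|_{\calC^{\sigma+\varsigma+\alpha}_p}$ still carries the unwanted $(T-t)^{-\gamma'}$ blow-up of $F(t)$. To recover the $C^{1-\gamma}_{\overline{T},T}$-bound with the correct $\overline{T}^{\gamma-\gamma'}$ prefactor, I would complement the bound on $\|F(t)\|_{\calC^{\sigma+\varsigma+\alpha}_p}$ by the low-regularity bound $\|F(t)\|_{\calC^{\sigma+\varsigma}_p}\lesssim(T-t)^{1-\gamma'}\|g\|\|h\|$ (obtained by integrating the $\vartheta=0$ estimates on the integrands of $F_1,F_2$), and then Besov-interpolate to get $\|F(t)\|_{\calC^{\sigma+\varsigma+(1-\gamma)\alpha}_p}\lesssim(T-t)^{\gamma-\gamma'}\|g\|\|h\|\lesssim\overline{T}^{\gamma-\gamma'}\|g\|\|h\|$; applying \cref{schauder} with $\vartheta=(1-\gamma)\alpha$ then gives $\|(\operatorname{Id}-P_{t-s})F(t)\|_{\calC^{\sigma+\varsigma}_p}\lesssim(t-s)^{1-\gamma}\overline{T}^{\gamma-\gamma'}\|g\|\|h\|$ as required.
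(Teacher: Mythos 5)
Your overall framework is correct and, for the $\mathcal{M}^\gamma$-bounds, coincides with the paper's: the $\mathcal{M}_T^\gamma$-estimate in \eqref{eq:c1} via \cref{schaudercom} at $\vartheta=\gamma\alpha$, and the decomposition $F=F_1+F_2$ together with \cref{lem:maxreg} for the $\mathcal{M}^\gamma_{\overline T,T}$-bound in \eqref{eq:c3}, are exactly the paper's $I_1,I_2$. Where you differ is in the time-regularity estimates, and there your route is genuinely distinct. For \eqref{eq:c1} you integrate $\partial_r P_{T-r}=\La P_{T-r}$ and hit the integrand with \cref{lem:a1}; the paper instead telescopes algebraically, $P_{T-t}-P_{T-s}=(\operatorname{Id}-P_{t-s})P_{T-t}$ plus a $P_{t-s}$-commutator, and estimates two separate pieces. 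Your version is cleaner because a single integrand estimate feeds both the $C_T^{1-\gamma}$ and $C_T^{\gamma,1}$ bounds through the two ways of integrating $\int_s^t(T-r)^{-\gamma}\diff r$. For \eqref{eq:c3} your handling of $(\operatorname{Id}-P_{t-s})F(t)$ via the low-regularity bound $\norm{F(t)}_{\calC^{\sigma+\varsigma}_p}\lesssim(T-t)^{1-\gamma'}$ and Besov interpolation to $\calC^{\sigma+\varsigma+(1-\gamma)\alpha}_p$ (prefactor $(T-t)^{\gamma-\gamma'}$) is a nice alternative to what the paper does, namely re-deriving the $I_1,I_2$ estimates directly at the intermediate regularity $\calC^{\varsigma+\sigma+(1-\gamma')\alpha}_p$ rather than interpolating the two endpoint bounds.

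There is, however, one genuine gap in \eqref{eq:c3}: you identify $(\operatorname{Id}-P_{t-s})F(t)$ as ``the main obstacle'' and fix it, but the semigroup--paraproduct commutator $P_{t-s}(g_t\para J^T(h)(t))-g_t\para P_{t-s}J^T(h)(t)$ suffers from exactly the same problem, and your stated remedy, \cref{schaudercom} ``e.g.\ with $\vartheta=-\alpha$'', only covers the $C^{\gamma,1}$-norm. With $\vartheta=-\alpha$ one gets $\lesssim(t-s)\,\norm{g_t}_{\calC^\sigma_p}\norm{J^T(h)(t)}_{\calC^{\varsigma+\alpha}}\lesssim(t-s)(T-t)^{-\gamma'}\norm{g}\norm{h}$, and dividing by $(t-s)^{1-\gamma}$ leaves the uncancelled factor $(t-s)^\gamma(T-t)^{-\gamma'}$, which blows up as $t\to T$ when $\gamma'>0$. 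The fix is the same flavour as your interpolation trick: apply \cref{schaudercom} with $\vartheta=-(1-\gamma)\alpha$ and pay for the weaker gain with the bound $\norm{J^T(h)(t)}_{\calC^{\varsigma+(1-\gamma)\alpha}}\lesssim(T-t)^{\gamma}\norm{h}_{\CTcalC^\varsigma}$, giving $(t-s)^{1-\gamma}(T-t)^{\gamma-\gamma'}\lesssim\overline T^{\gamma-\gamma'}(t-s)^{1-\gamma}$. This is precisely how the paper treats its corresponding term in \eqref{eq:ct2}.

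Two further minor remarks. First, in the identity for $F(t)-F(s)$ the last term should read $-(g_t-g_s)\para J^T(h)(s)$, not $+$; this is immaterial for the estimates. Second, because you place $g_t$ rather than $g_s$ in the short-time integral, the increment $g_r-g_t$ does not directly balance the $(r-s)^{-1}$ singularity of $P_{r-s}$; one needs the further split $g_r-g_t=(g_r-g_s)+(g_s-g_t)$, after which the first piece is the paper's $A_{st}$ estimate and the second is a constant-in-$r$ paraproduct against $\int_s^t P_{r-s}h_r\,\diff r$. Also, the phrase ``the same \cref{lem:maxreg}-machinery'' for this $\int_s^t$-integral is slightly misleading, since \cref{lem:maxreg} is about $\int_t^T$; what is really used is the elementary pair of estimates $\int_s^t(T-r)^{-\gamma'}\diff r\lesssim(t-s)^{1-\gamma'}\wedge(T-t)^{-\gamma'}(t-s)$, $\int_s^t(r-s)^{-\gamma'}\diff r\lesssim(t-s)^{1-\gamma'}$. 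None of these issues is structural; once the $\vartheta$-choice in the semigroup--paraproduct commutator is corrected, the proof closes.
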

\begin{remark}
It was already known that the commutator for the $J^{T}$-operator from the lemma allows for more space regularity than both of its summands. The above commutator estimate moreover yields a gain in time regularity, i.e. $J^{T}(g\para h)-g\para J^{T}(h)\in C_{T}^{1-\gamma}\calC_{p}^{\sigma+\varsigma}\cap C_{T}^{\gamma,1}\calC_{p}^{\sigma+\varsigma}$, provided that $g\in\mathcal{L}^{\gamma,\sigma}_{T}$.
\end{remark}

\begin{proof}
Recall that $\mathcal{L}^{\gamma,\sigma+\varsigma+\gamma\alpha}_{T}$ is equipped with the sum of the norms in 
\begin{align*}
\mathcal{M}_{T}^{\gamma}\calC_{p}^{\sigma+\varsigma+\alpha\gamma},\quad C_{T}^{\gamma,1}\calC_{p}^{\sigma+\varsigma+\alpha\gamma-\alpha}\quad\text{ and }\quad  C_{T}^{1-\gamma}\calC_{p}^{\sigma+\varsigma+\alpha\gamma-\alpha},
\end{align*} that we need to estimate below.\\
For \eqref{eq:c1}, the estimate in $\mathcal{M}_{T}^{\gamma}\calC_{p}^{\sigma+\varsigma+\alpha\gamma}$ follows directly by the semigroup commutator \cref{schaudercom} applied to $\vartheta=\gamma\alpha$. For the estimate in $C_{T}^{\gamma,1}\calC_{p}^{\sigma+\varsigma+\alpha\gamma-\alpha}\cap C_{T}^{1-\gamma}\calC_{p}^{\sigma+\varsigma+\alpha\gamma-\alpha}$ we write for $0\leqslant s\leqslant t\leqslant T$,
\begin{align*}
\MoveEqLeft
P_{T-t}( u\para v)- u\para P_{T-t}(v)-(P_{T-s}( u\para v)- u\para P_{T-s}(v))\\&=(\operatorname{Id}-P_{t-s})[P_{T-t}(u\para v)-u\para P_{T-t}v]\\&\qquad+[u\para P_{t-s}P_{T-t}v-P_{t-s}(u\para P_{T-t}v)].
\end{align*} 
The first summand we can estimate by the semigroup estimates (\cref{schauder}) for $\operatorname{Id}-P_{t-s}$ and the commutator estimate in $\mathcal{M}_{T}^{\gamma}\calC_{p}^{\sigma+\varsigma+\alpha\gamma}$, obtaining
\begin{align*}
\MoveEqLeft
\norm{(\operatorname{Id}-P_{t-s})[P_{T-t}(u\para v)-u\para P_{T-t}v]}_{\calC^{\sigma+\varsigma+\alpha\gamma-\alpha}_{p}}\\&\lesssim\abs{t-s}\norm{[P_{T-t}(u\para v)-u\para P_{T-t}v]}_{\calC^{\sigma+\varsigma+\alpha\gamma}_{p}}
\\&\lesssim(T-t)^{-\gamma}\abs{t-s}\norm{u}_{\calC^{\sigma}_{p}}\norm{v}_{\calC^{\varsigma}}.
\end{align*} 
This gives the estimate in $C_{T}^{\gamma,1}\calC_{p}^{\sigma+\varsigma+\alpha(\gamma-1)}$.  Analogously we estimate the $C_{T}^{1-\gamma}\calC_{p}^{\sigma+\varsigma+\alpha(\gamma-1)}$-norm using the Schauder estimates for $\operatorname{Id}-P_{t-s}$ (obtaining a factor of $\abs{t-s}^{1-\gamma}$) and the commutator in $C_{\overline{T},T}\calC_{p}^{\sigma+\varsigma}$, i.e.
\begin{align*}
\MoveEqLeft
\norm{(\operatorname{Id}-P_{t-s})[P_{T-t}(u\para v)-u\para P_{T-t}v]}_{\calC^{\sigma+\varsigma+\alpha\gamma-\alpha}_{p}}\\&\lesssim\abs{t-s}^{1-\gamma}\norm{[P_{T-t}(u\para v)-u\para P_{T-t}v]}_{\calC^{\sigma+\varsigma}_{p}}
\\&\lesssim\abs{t-s}^{1-\gamma}\norm{u}_{\calC^{\sigma}_{p}}\norm{v}_{\calC^{\varsigma}}.
\end{align*}
The second summand can be estimated using the semigroup commutator (\cref{schaudercom}) for $\vartheta=(\gamma-1)\alpha\geqslant -\alpha$ and the semigroup estimate \eqref{eq:schauder2}, such that
\begin{align*}
\MoveEqLeft
\norm[\big]{P_{t-s}(u\para P_{T-t}v)-u\para P_{t-s}P_{T-t}v}_{\calC^{\sigma+\varsigma+\alpha\gamma-\alpha}_{p}}\\&\lesssim\abs{t-s}^{1-\gamma}\norm{u}_{\calC^{\sigma}_{p}}\norm{P_{T-t}v}_{\calC^\varsigma}\lesssim\abs{t-s}^{1-\gamma}\norm{u}_{\calC^{\sigma}_{p}}\norm{v}_{\calC^\varsigma}.
\end{align*} 
Using instead the semigroup commutator for $\vartheta=-\alpha\geqslant -\alpha$ and again the semigroup estimate \eqref{eq:schauder2} yields
\begin{align}\label{eq:id-p-commutator}
\MoveEqLeft
\norm[\big]{P_{t-s}(u\para P_{T-t}v)-u\para P_{t-s}P_{T-t}v}_{\calC^{\sigma+\varsigma+\alpha\gamma-\alpha}_{p}}\nonumber
\\&\lesssim\abs{t-s}\norm{u}_{\calC^{\sigma}_{p}}\norm{P_{T-t}v}_{\calC^{\varsigma+\alpha\gamma}}\nonumber
\\&\lesssim\abs{t-s}(T-t)^{-\gamma}\norm{u}_{\calC^{\sigma}_{p}}\norm{v}_{\calC^\varsigma}.
\end{align} 
Together, we obtain \eqref{eq:c1}.
For \eqref{eq:c3}, we first prove that $C(g,h):=J^{T}(g\para h)-g\para J^{T}(h)\in\mathcal{M}_{\overline{T},T}^{\gamma}\calC_{p}^{\sigma+\varsigma+\alpha}$.
To that end, we write
\begin{align*}
\squeeze[1]{C(g,h)_{t}=\!\int_{t}^{T}\!\paren[\big]{P_{r-t}(g_{r}\para h_{r})-g_{r}\para P_{r-t}h_{r}}dr+\int_{t}^{T}\!(g_{r}-g_{t})\para P_{r-t}h_{r}dr=:I_{1}(t)+I_{2}(t).}
\end{align*}
To estimate $I_{1}$, we utilize \cref{lem:maxreg} for $f_{t,r}=P_{r-t}(g_{r}\para h_{r})-g_{r}\para P_{r-t}h_{r}$,  
where the assumptions of the lemma are satisfied by the semigroup commutator estimate (\cref{schaudercom}). Then, we obtain
\begin{align*}
\norm{I_{1}(t)}_{\calC^{\sigma+\varsigma+\alpha}_{p}}
\lesssim\norm{g}_{\mathcal{M}^{\gamma'}_{\overline{T},T}\calC_{p}^{\sigma}}\norm{h}_{C_{T}\mathcal{C}^{\varsigma}}(T-t)^{-\gamma'}
\lesssim\overline{T}^{\gamma-\gamma'}\norm{g}_{\mathcal{M}^{\gamma'}_{\overline{T},T}\calC_{p}^{\sigma}}\norm{h}_{C_{T}\mathcal{C}^{\varsigma}}(T-t)^{-\gamma}.
\end{align*} 
For $I_{2}$, we apply \cref{lem:maxreg} for $f_{t,r}:=(g_{r}-g_{t})\para P_{r-t}h_{r}$. We check the assumptions on $f_{t,r}$ of that lemma, using the time regularity of $g$, as well as the paraproduct estimate (using $\sigma-\alpha<0$) and the semigroup estimates. Then, choosing $\theta=0$ or $\theta=(1+\epsilon)\alpha$ for $\epsilon\in[0,1]$, we estimate (the estimate is in fact valid for all $\theta\geqslant -\alpha$)
\begin{align*}
\norm{(g_{r}-g_{t})\para P_{r-t}h_{r}}_{\calC^{\sigma+\varsigma+\theta}_{p}}&=\norm{(g_{r}-g_{t})\para P_{r-t}h_{r}}_{\calC^{(\sigma-\alpha)+(\varsigma+\theta+\alpha)}_{p}}\\&\lesssim (T-r)^{-\gamma'}(r-t)^{-\theta/\alpha}\norm{h}_{C_{T}\calC^{\varsigma}}\norm{g}_{C_{\overline{T},T}^{\gamma',1}\calC^{\sigma-\alpha}_{p}}
\end{align*} 
Applying \cref{lem:maxreg} yields then the estimate for $I_{2}$:
\begin{align*}
\norm{I_{2}(t)}_{\calC^{\sigma+\varsigma+\alpha}_{p}}
\lesssim\overline{T}^{\gamma-\gamma'}\norm{g}_{C_{\overline{T},T}^{\gamma',1}\calC^{\sigma-\alpha}_{p}}\norm{h}_{C_{T}\mathcal{C}^{\varsigma}_{\R^{d}}}(T-t)^{-\gamma}.
\end{align*} 
Next, we prove the time regularity estimates on the commutator $C(g,h)$. For that, we write for $T-\overline{T}\leqslant s\leqslant t\leqslant T$, 
\begin{align*}
\MoveEqLeft
J^{T}( g\para h)_t- g_t\para J^{T}(h)_t-(J^{T}( g\para h)_s- g_s\para J^{T}(h)_s)\\&=-\int_{s}^{t}P_{r-s}(g_{r}\para h_{r})dr-(P_{t-s}-\operatorname{Id})\paren[\bigg]{\int_{t}^{T}P_{r-t}(g_{r}\para h_{r})dr}\\&\qquad+g_{s}\para\int_{s}^{t}P_{r-s}h_{r}dr-g_{s,t}\para\int_{t}^{T}P_{r-t}h_{r}dr+g_{s}\para (P_{t-s}-\operatorname{Id})\paren[\bigg]{\int_{t}^{T}P_{r-t}h_{r}dr}\\&=A_{st}+B_{st}+C_{st},
\end{align*} where we define
\begin{align*}
A_{st}:=g_{s}\para\int_{s}^{t}P_{r-s}h_{r}dr-\int_{s}^{t}P_{r-s}(g_{r}\para h_{r})dr
\end{align*} and
\begin{align*}
B_{st}:=-g_{s,t}\para\int_{t}^{T}P_{r-t}h_{r}dr,
\end{align*} where $g_{s,t}:=g_{t}-g_{s}$ and
\begin{align*}
C_{st}:=g_{s}\para P_{t-s}\paren[\bigg]{\int_{t}^{T}P_{r-t}h_{r}dr}-P_{t-s}\paren[\bigg]{\int_{t}^{T}P_{r-t}(g_{r}\para h_{r})dr}.
\end{align*}
We will consider the terms $A_{st},B_{st}$ and $C_{st}$ separately and estimate each term in the $C_{\overline{T},T}^{1-\gamma}\calC^{\sigma+\varsigma}$-norm and in the $C_{\overline{T},T}^{\gamma,1}\calC^{\sigma+\varsigma}$-norm.\\
We start with $B_{st}$, using the time regularity of $g$, obtaining on the one hand
\begin{align*}
\norm{B_{st}}_{\calC^{\sigma+\varsigma}_{p}}&=\norm[\bigg]{g_{s,t}\para\int_{t}^{T}P_{r-t}h_{r}dr}_{\calC^{(\sigma-\alpha)+(\alpha+\varsigma)}_{p}}\\&\lesssim\norm{g}_{C_{\overline{T},T}^{1-\gamma'}\calC^{\sigma-\alpha}_{p}}\abs{t-s}^{1-\gamma'}\norm[\bigg]{\int_{t}^{T}P_{r-t}h_{r}dr}_{\calC^{\alpha+\varsigma}}\\&\lesssim\abs{t-s}^{1-\gamma'}\norm{g}_{C_{\overline{T},T}^{1-\gamma'}\calC^{\sigma-\alpha}_p}\norm{h}_{C_{T}\calC^{\varsigma}}
\\&\lesssim\overline{T}^{\gamma-\gamma'}\abs{t-s}^{1-\gamma}\norm{g}_{C_{\overline{T},T}^{1-\gamma'}\calC^{\sigma-\alpha}_p}\norm{h}_{C_{T}\calC^{\varsigma}},
\end{align*} using $\sigma-\alpha<0$ and \cref{lem:maxreg} for $f_{t,r}=P_{r-t}h_{r}$ to bound the time integral. On the other hand, along the same lines, using instead $g\in C_{\overline{T},T}^{\gamma',1}\calC^{\sigma-\alpha}_p$, we can estimate $B_{st}$ by
\begin{align*}
\norm{B_{st}}_{\calC^{\sigma+\varsigma}_p}\lesssim\overline{T}^{\gamma-\gamma'}\norm{h}_{C_{T}\calC^{\varsigma}}\norm{g}_{C_{\overline{T},T}^{\gamma',1}\calC^{\sigma-\alpha}_p}\abs{t-s}(T-t)^{-\gamma}.
\end{align*}
For $A_{st}$, we use the semigroup commutator (\cref{schaudercom}) for $\vartheta=0$, as well as the time regularity of $g$, which yields 
\begin{align*}
\MoveEqLeft
\norm{A_{st}}_{\calC^{\sigma+\varsigma}_p}\\&=\norm[\bigg]{\int_{s}^{t}P_{r-s}(g_{r}\para h_{r})dr-g_{s}\para\int_{s}^{t}P_{r-s}h_{r}dr}_{\calC^{\sigma+\varsigma}_p}\\&\leqslant\squeeze[1]{\norm[\bigg]{\int_{s}^{t}(P_{r-s}(g_{r}\para h_{r})-g_{r}\para P_{r-s}h_{r})dr}_{\calC^{\sigma+\varsigma}_p}\!\!\!\!+\norm[\bigg]{\int_{s}^{t}(g_{r}-g_{s})\para P_{r-s}h_{r}dr}_{\calC^{(\sigma-\alpha)+(\varsigma+\alpha)}_p}}\\&\leqslant\norm{g}_{\mathcal{M}^{\gamma'}_{\overline{T},T}\calC^{\sigma}}\norm{h}_{C_{T}\calC^{\varsigma}_p}\int_{s}^{t}(T-r)^{-\gamma'}dr+\norm{h}_{C_{T}\calC^{\varsigma}}\norm{g}_{C^{1-\gamma'}_{\overline{T},T}\calC^{\sigma-\alpha}_p}\int_{s}^{t}\abs{r-s}^{-\gamma'}dr\\&\lesssim\norm{g}_{\mathcal{L}_{\overline{T},T}^{\gamma',\sigma}}\norm{h}_{C_{T}\mathcal{C}^{\varsigma}}\paren[\big]{(T-s)^{1-\gamma'}-(T-t)^{1-\gamma'}+\abs{t-s}^{1-\gamma'}}\\&\lesssim\norm{g}_{\mathcal{L}_{\overline{T},T}^{\gamma',\sigma}}\norm{h}_{C_{T}\mathcal{C}^{\varsigma}}\overline{T}^{\gamma-\gamma'}\abs{t-s}^{1-\gamma}.
\end{align*} 
We can also estimate the term $A_{st}$ by
\begin{align*}
\norm{A_{st}}_{\calC^{\sigma+\varsigma}_p}&\leqslant\squeeze[1]{\norm{g}_{\mathcal{M}^{\gamma'}_{\overline{T},T}\calC^{\sigma}_p}\norm{h}_{C_{T}\calC^{\varsigma}}\int_{s}^{t}(T-r)^{-\gamma'}dr+\norm{h}_{C_{T}\calC^{\varsigma}}\norm{g}_{C^{\gamma',1}_{\overline{T},T}\calC^{\sigma-\alpha}_p}\int_{s}^{t}(T-r)^{-\gamma'}dr}\\&\lesssim (T-t)^{-\gamma'}\abs{t-s}\norm{h}_{C_{T}\calC^{\varsigma}}\paren[\bigg]{\norm{g}_{\mathcal{M}^{\gamma'}_{\overline{T},T}\calC^{\sigma}_p}+\norm{g}_{C^{\gamma',1}_{\overline{T},T}\calC^{\sigma-\alpha}_p}}\\&\lesssim \overline{T}^{\gamma-\gamma'}(T-t)^{-\gamma}\abs{t-s}\norm{h}_{C_{T}\calC^{\varsigma}}\norm{g}_{\mathcal{L}^{\gamma',\sigma}_{\overline{T},T}},
\end{align*} using $(T-r)^{-\gamma}\leqslant(T-t)^{-\gamma}$ for $r\in [s,t]$.
It is left to estimate the term $C_{st}$, that we first rewrite:
\begin{align}
C_{st}&=P_{t-s}\paren[\bigg]{\int_{t}^{T}P_{r-t}(g_{r}\para h_{r})dr}-g_{s}\para P_{t-s}\paren[\bigg]{\int_{t}^{T}P_{r-t}h_{r}dr}\nonumber\\&=(P_{t-s}-\operatorname{Id})\paren[\bigg]{\int_{t}^{T}P_{r-t}(g_{r}\para h_{r})dr-g_{s}\para\int_{t}^{T}P_{r-t}h_{r}dr}\label{eq:ct1}\\&\quad+P_{t-s}\paren[\bigg]{g_{s}\para\int_{t}^{T}P_{r-t}h_{r}dr}-g_{s}\para P_{t-s}\paren[\bigg]{\int_{t}^{T}P_{r-t}h_{r}dr}.\label{eq:ct2}
\end{align} 
To estimate the term in line \eqref{eq:ct1}, we use \cref{schauder} and the estimate for $I_{1}(t)+I_{2}(t)$ from above to obtain
\begin{align*}
\MoveEqLeft
\norm[\bigg]{(P_{t-s}-\operatorname{Id})\paren[\bigg]{\int_{t}^{T}(P_{r-t}(g_{r}\para h_{r}) -g_{s}\para P_{r-t}h_{r})dr}}_{\calC^{\varsigma+\sigma+\alpha-\alpha}_p}\\&\lesssim\abs{t-s}\norm[\bigg]{\int_{t}^{T}(P_{r-t}(g_{r}\para h_{r}) -g_{s}\para P_{r-t}h_{r})dr}_{\calC^{\varsigma+\sigma+\alpha}_p}\\&\lesssim\abs{t-s}(T-t)^{-\gamma}\overline{T}^{\gamma-\gamma'}\norm{g}_{\mathcal{L}_{\overline{T},T}^{\gamma',\sigma}}\norm{h}_{C_{T}\mathcal{C}^{\varsigma}}.
\end{align*} 
The term in line \eqref{eq:ct1}, we can also estimate differently using \cref{schauder} and an easier estimate for $I_{1}(t),I_{2}(t)$ using the semigroup estimates and $\alpha(1-\gamma')<\alpha$ to obtain
\begin{align*}
\MoveEqLeft
\norm[\bigg]{(P_{t-s}-\operatorname{Id})\paren[\bigg]{\int_{t}^{T}(P_{r-t}(g_{r}\para h_{r}) -g_{s}\para P_{r-t}h_{r})dr}}_{\calC^{\varsigma+\sigma}_p}\\&\lesssim\abs{t-s}^{1-\gamma'}\norm[\bigg]{\int_{t}^{T}(P_{r-t}(g_{r}\para h_{r}) -g_{s}\para P_{r-t}h_{r})dr}_{\calC^{\varsigma+\sigma+\alpha(1-\gamma')}_p}\\&\lesssim\abs{t-s}^{1-\gamma'}(\norm{I_{1}(t)}_{\calC^{\varsigma+\sigma+\alpha(1-\gamma')}_p}+\norm{I_{2}(t)}_{\calC^{\varsigma+\sigma+\alpha(1-\gamma')}_p})\\&\lesssim\abs{t-s}^{1-\gamma'}\norm{h}_{C_{T}\mathcal{C}^{\varsigma}}\paren[\bigg]{[\norm{g}_{\mathcal{M}^{\gamma'}_{\overline{T},T}\calC^{\sigma}_p}+\norm{g}_{C^{\gamma',1}_{\overline{T},T}\calC^{\sigma-\alpha}_p}]\int_{t}^{T}(T-r)^{-\gamma'}(r-t)^{-1+\gamma'}dr}\\&\lesssim\overline{T}^{\gamma-\gamma'}\abs{t-s}^{1-\gamma}\norm{h}_{C_{T}\mathcal{C}^{\varsigma}}\norm{g}_{\mathcal{L}^{\gamma',\sigma}_{\overline{T},T}}\int_{0}^{1}(1-r)^{-\gamma'}r^{-1+\gamma'}dr\\&\lesssim\overline{T}^{\gamma-\gamma'}\abs{t-s}^{1-\gamma}\norm{h}_{C_{T}\mathcal{C}^{\varsigma}}\norm{g}_{\mathcal{L}^{\gamma',\sigma}_{\overline{T},T}}.
\end{align*} 
To estimate the term in line \eqref{eq:ct2}, we use the commutator for $P_{t-s}$ for $\vartheta=-\alpha$ and again \cref{lem:maxreg} for $f_{t,r}=P_{r-t}h_{r}$, yielding 
\begin{align*}
\MoveEqLeft
\norm[\bigg]{P_{t-s}\paren[\bigg]{g_{s}\para\int_{t}^{T}P_{r-t}h_{r}dr}-g_{s}\para P_{t-s}\paren[\bigg]{\int_{t}^{T}P_{r-t}h_{r}dr)}}_{\calC^{\sigma+\varsigma+\alpha-\alpha}_p}\\&\lesssim\abs{t-s}\norm{h}_{C_{T}\mathcal{C}^{\varsigma}}\norm{g}_{\mathcal{M}^{\gamma'}_{\overline{T},T}\calC^{\sigma}_p}(T-s)^{-\gamma'}\norm[\bigg]{\int_{t}^{T}P_{r-t}h_{r}dr}_{\calC^{\varsigma+\alpha}}\\&\lesssim\overline{T}^{\gamma-\gamma'}\abs{t-s}\norm{h}_{C_{T}\mathcal{C}^{\varsigma}}\norm{g}_{\mathcal{M}^{\gamma'}_{\overline{T},T}\calC^{\sigma}_p}(T-s)^{-\gamma}.
\end{align*} 
Applying instead the semigroup commutator for $\vartheta=-(1-\gamma')\alpha$ yields
\begin{align*}
\MoveEqLeft
\norm[\bigg]{P_{t-s}\paren[\bigg]{g_{s}\para\int_{t}^{T}P_{r-t}h_{r}dr}-g_{s}\para P_{t-s}\paren[\bigg]{\int_{t}^{T}P_{r-t}h_{r}dr)}}_{\calC^{\sigma+\varsigma}_p}\\&\lesssim\abs{t-s}^{1-\gamma'}\norm{g}_{\mathcal{M}^{\gamma'}_{\overline{T},T}\calC^{\sigma}_p}(T-s)^{-\gamma'}\norm[\bigg]{\int_{t}^{T}P_{r-t}h_{r}dr}_{\calC^{\varsigma+\alpha(1-\gamma')}}\\&\lesssim\abs{t-s}^{1-\gamma'}\norm{h}_{C_{T}\mathcal{C}^{\varsigma}}\norm{g}_{\mathcal{M}^{\gamma'}_{\overline{T},T}\calC^{\sigma}_p}(T-s)^{-\gamma'}(T-t)^{\gamma'}\\&\lesssim\overline{T}^{\gamma-\gamma'}\abs{t-s}^{1-\gamma}\norm{h}_{C_{T}\mathcal{C}^{\varsigma}}\norm{g}_{\mathcal{M}^{\gamma'}_{\overline{T},T}\calC^{\sigma}_p},
\end{align*} 
where to bound the time integral, we used that $\alpha(1-\gamma')<\alpha$ and $s\leqslant t$. 
Together we obtain the desired estimates for $C_{st}$, which yield together with the estimates for $A_{st}$ and $B_{st}$ the claim.
\end{proof}
\begin{remark}
The proof of the commutator estimate does not apply if we consider instead of $g\in \mathcal{L}_{T}^{\gamma,\sigma}$, a function $g\in \mathcal{M}_{T}^{\gamma}\calC^{\sigma}\cap C_{T}^{1-\gamma}\calC^{\sigma-\alpha}$. The reason is the estimate for the term $I_{2}$ in the above proof, for which we need to employ that $g\in C_{T}^{\gamma,1}\calC^{\sigma-\alpha}$.  
\end{remark}
\noindent We conclude this section with interpolation estimates for the spaces $\mathcal{L}_{T}^{\gamma,\theta}$.
\begin{lemma}[Interpolation estimates]\label{lem:int-est}
Let $\gamma\in[0,1)$, $\theta\in [0,\alpha]$, $p\in[1,\infty]$. Let moreover $v\in\mathcal{L}_{T}^{\gamma,\theta}$. Then the following estimates hold true:\\ 
It follows that for $\theta\in (0,\alpha)$,
\begin{align}\label{eq:i2}
\norm{v}_{C_{T}^{\theta/\alpha}L^{p}}\lesssim \norm{v}_{\mathcal{L}_{T}^{0,\theta}}.
\end{align} 
Furthermore, for $\tilde{\theta}\in [0,\alpha]$, it holds that
\begin{align}\label{eq:i3a}
\norm{v}_{C_{T}^{\gamma,\tilde{\theta}/\alpha}\calC^{\theta-\tilde{\theta}}_p}\lesssim\norm{v}_{\mathcal{L}_{T}^{\gamma,\theta}}
\end{align} and
\begin{align}\label{eq:i3}
\norm{v}_{\mathcal{M}_{T}^{\gamma(1-\tilde{\theta}/\alpha)}\calC^{\theta-\tilde{\theta}}_{p}}\lesssim \norm{v}_{\mathcal{L}_{T}^{\gamma,\theta}}.
\end{align} 
If $v_{T}\in\calC^{\theta-\tilde{\theta}}_{p}$ and $\tilde{\theta}\in[\alpha\gamma,\alpha]$, then the following estimate holds true
\begin{align}\label{eq:i3b}
\norm{v_{t}}_{\calC^{\theta-\tilde{\theta}}_{p}}\lesssim (T-t)^{\tilde{\theta}/\alpha-\gamma}\norm{v}_{\mathcal{L}_{T}^{\gamma,\theta}}+\norm{v_{T}}_{\calC^{\theta-\tilde{\theta}}_{p}}.
\end{align}
\end{lemma}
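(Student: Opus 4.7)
The overall strategy is a Littlewood--Paley dyadic splitting combined with the elementary interpolation identity $\theta-\tilde\theta=(1-\tilde\theta/\alpha)\theta+(\tilde\theta/\alpha)(\theta-\alpha)$, which yields blockwise
\[
2^{j(\theta-\tilde\theta)}\|\Delta_j f\|_{L^p}=\bigl(2^{j\theta}\|\Delta_j f\|_{L^p}\bigr)^{1-\tilde\theta/\alpha}\bigl(2^{j(\theta-\alpha)}\|\Delta_j f\|_{L^p}\bigr)^{\tilde\theta/\alpha},
\]
and therefore $\|f\|_{\calC^{\theta-\tilde\theta}_p}\le\|f\|_{\calC^{\theta}_p}^{1-\tilde\theta/\alpha}\|f\|_{\calC^{\theta-\alpha}_p}^{\tilde\theta/\alpha}$. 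Each of the four bounds then reduces to combining the $\calC^{\theta}_p$-information (which carries a blow-up in $T-t$) with the $\calC^{\theta-\alpha}_p$-information (which carries time regularity) in the appropriate way.

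For \eqref{eq:i3a} I would apply the above interpolation inequality directly to $v_t-v_s$, using $\|v_t-v_s\|_{\calC^{\theta}_p}\le 2(T-t)^{-\gamma}\|v\|_{\mathcal M_T^{\gamma}\calC^{\theta}_p}$ (note that $(T-s)^{-\gamma}\le(T-t)^{-\gamma}$ for $s\le t$) and $\|v_t-v_s\|_{\calC^{\theta-\alpha}_p}\le(T-t)^{-\gamma}|t-s|\,\|v\|_{C_T^{\gamma,1}\calC^{\theta-\alpha}_p}$; the factor $(T-t)^{-\gamma}|t-s|^{\tilde\theta/\alpha}$ then falls out at once. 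For \eqref{eq:i2}, which concerns an $L^p$-norm, the same two blockwise bounds (with $\gamma=0$) are summed rather than interpolated: writing $\|v_t-v_s\|_{L^p}\le\sum_j\|\Delta_j(v_t-v_s)\|_{L^p}$, splitting the sum at the threshold $2^N\sim|t-s|^{-1/\alpha}$, and using $\theta\in(0,\alpha)$ to make both geometric series convergent gives the rate $|t-s|^{\theta/\alpha}$. Estimate \eqref{eq:i3} follows by applying the interpolation inequality to $v_t$ itself, combining $\|v_t\|_{\calC^{\theta}_p}\lesssim(T-t)^{-\gamma}\|v\|$ with a $T$-dependent uniform bound $\|v_t\|_{\calC^{\theta-\alpha}_p}\lesssim_T\|v\|$; the latter follows by writing $v_t=v_0+(v_t-v_0)$, using that the $\mathcal M_T^{\gamma}\calC^{\theta}_p$-component of the $\mathcal L_T^{\gamma,\theta}$-norm enforces continuity at $t=0$ so that $\|v_0\|_{\calC^{\theta}_p}\le T^{-\gamma}\|v\|$, and controlling the increment via the $C_T^{1-\gamma}\calC^{\theta-\alpha}_p$-seminorm.

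For \eqref{eq:i3b} the hypothesis $v_T\in\calC^{\theta-\tilde\theta}_p$ provides strictly better information than what $C_T^{1-\gamma}\calC^{\theta-\alpha}_p$ alone would give, so the crude interpolation of \eqref{eq:i3} wastes information. I would therefore decompose $v_t=v_T+(v_t-v_T)$ and estimate $\|v_t\|_{\calC^{\theta-\tilde\theta}_p}=\sup_j 2^{j(\theta-\tilde\theta)}\|\Delta_j v_t\|_{L^p}$ dyadically. On low frequencies $j\le N$ I bound
\[
2^{j(\theta-\tilde\theta)}\|\Delta_j v_t\|_{L^p}\le\|v_T\|_{\calC^{\theta-\tilde\theta}_p}+2^{j(\alpha-\tilde\theta)}(T-t)^{1-\gamma}\|v\|_{C_T^{1-\gamma}\calC^{\theta-\alpha}_p},
\]
while on high frequencies $j>N$ I use $2^{j(\theta-\tilde\theta)}\|\Delta_j v_t\|_{L^p}\le 2^{-j\tilde\theta}(T-t)^{-\gamma}\|v\|_{\mathcal M_T^{\gamma}\calC^{\theta}_p}$. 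Choosing the threshold $2^N\sim(T-t)^{-1/\alpha}$ makes both competing rates equal $(T-t)^{\tilde\theta/\alpha-\gamma}$, and the restriction $\tilde\theta\in[\alpha\gamma,\alpha]$ is exactly what ensures this exponent is non-negative so that the bound does not blow up as $t\to T$; the $\|v_T\|_{\calC^{\theta-\tilde\theta}_p}$-contribution is frequency independent and enters additively as in the statement.

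I expect \eqref{eq:i3b} to be the most delicate step, because it forces the $(T-t)$-dependent dyadic threshold and has to juggle three estimates (two low-frequency and one high-frequency) so that the exponents match up precisely at the critical scale. The endpoint cases $\tilde\theta\in\{0,\alpha\}$ throughout the lemma are handled directly from the defining norms of $\mathcal L_T^{\gamma,\theta}$, and the restrictions $\theta\in(0,\alpha)$ in \eqref{eq:i2} and $\tilde\theta\in[\alpha\gamma,\alpha]$ in \eqref{eq:i3b} show up exactly where geometric series or rate exponents would otherwise diverge.
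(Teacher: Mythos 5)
Your proof is correct and follows essentially the same route as the paper: Littlewood--Paley blockwise bounds interpolated via $\min(a,b)\le a^{1-\tilde\theta/\alpha}b^{\tilde\theta/\alpha}$ (the paper's phrasing; equivalent to your exact blockwise identity), summing the blocks at a threshold $\sim|t-s|^{-1/\alpha}$ for \eqref{eq:i2}, and for \eqref{eq:i3b} decomposing at $v_T$ with a $(T-t)$-dependent dyadic threshold. One small detail that you actually handle more carefully than the paper: for \eqref{eq:i3} the paper just asserts $\|v\|_{C_T\calC^{\theta-\alpha}_p}\lesssim\|v\|_{\mathcal L_T^{\gamma,\theta}}$, whereas you give the short justification via $v_t=v_0+(v_t-v_0)$ and the $C_T^{1-\gamma}$--seminorm.
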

\begin{remark}
For $\gamma=0$, $\theta\in (0,1]$ and a Banach space $X$, we recall that $C_{T}^{0,\theta}X=C_{T}^{\theta}X$.
\end{remark}

\begin{proof}
To prove \eqref{eq:i2} we let $0\leqslant s\leqslant t\leqslant T$ and estimate
\begin{align*}
\norm{v_{t}-v_{s}}_{L^{p}}&\leqslant\sum_{j}\norm{\Delta_{j}(v_{t}-v_{s})}_{L^{p}}\\&\lesssim\sum_{j:\,2^{-j}\leqslant\abs{t-s}^{1/\alpha}}2^{-j\theta}\norm{v}_{C_{T}\calC_{p}^{\theta}}+\sum_{j:\,2^{-j}>\abs{t-s}^{1/\alpha}}2^{-j(\theta-\alpha)}\abs{t-s}\norm{v}_{C_{T}^{1}\calC_{p}^{\theta-\alpha}}\\&\lesssim \abs{t-s}^{\theta/\alpha}\norm{v}_{C_{T}\calC_{p}^{\theta}}+\abs{t-s}^{\theta/\alpha}\norm{v}_{C_{T}^{1}\calC_{p}^{\theta-\alpha}},
\end{align*} using that $\theta> 0$ for the convergence of the geometric sum and that $\theta <\alpha$.
To prove \eqref{eq:i3a} and \eqref{eq:i3}, we let $\tilde{\theta}\in [0,\alpha]$.
Then we estimate for $s<t$,
\begin{align*}
\norm{\Delta_{j}(v_{t}-v_{s})}_{L^{p}}\lesssim (T-t)^{-\gamma}\min\Big(2^{-j\theta}\norm{v}_{\mathcal{M}_{T}^{\gamma}\calC_{p}^{\theta}},2^{-j(\theta-\alpha)}\abs{t-s}\norm{v}_{C_{T}^{\gamma,1}\calC_{p}^{\theta-\alpha}}\Big)
\end{align*} and for $t\in[0,T)$, 
\begin{align*}
\norm{\Delta_{j}v_{t}}_{L^{p}}\lesssim\min(2^{-j\theta}(T-t)^{-\gamma}\norm{v}_{\mathcal{M}_{T}^{\gamma}\calC_{p}^{\theta}},2^{-j(\theta-\alpha)}\norm{v}_{C_{T}\calC_{p}^{\theta-\alpha}}).
\end{align*} 
Thus by interpolation (that is, $\min(a,b)\leqslant a^{\epsilon}b^{1-\epsilon}$ for $a,b\geqslant 0$, $\epsilon\in [0,1]$) and using that $\norm{v}_{C_{T}\calC_{p}^{\theta-\alpha}}\lesssim\norm{v}_{\mathcal{L}_{T}^{\gamma,\theta}}$, we obtain
\begin{align*}
\norm{\Delta_{j}(v_{t}-v_{s})}_{L^{p}}&\lesssim (T-t)^{-\gamma}2^{-j\theta(1-\tilde{\theta}/\alpha)}2^{-j(\theta-\alpha)\tilde{\theta}/\alpha}\abs{t-s}^{\tilde{\theta}/\alpha}\norm{v}_{\mathcal{L}_{T}^{\gamma,\theta}}\\&=(T-t)^{-\gamma} 2^{-j(\theta-\tilde{\theta})}\abs{t-s}^{\tilde{\theta}/\alpha}\norm{v}_{\mathcal{L}_{T}^{\gamma,\theta}},
\end{align*} from which \eqref{eq:i3a} follows, and
\begin{align*}
\norm{\Delta_{j}v_{t}}_{L^{p}}&\lesssim 2^{-j\theta(1-\tilde{\theta}/\alpha)}(T-t)^{-\gamma(1-\tilde{\theta}/\alpha)}\norm{v}_{\mathcal{M}_{T}^{\gamma}\calC_{p}^{\theta}}^{1-\tilde{\theta}/\alpha}\,\, 2^{-j(\theta-\alpha)\tilde{\theta}/\alpha}\norm{v}_{C_{T}\calC_{p}^{\theta-\alpha}}^{\tilde{\theta}/\alpha}\\&\leqslant 2^{-j(\theta-\tilde{\theta})}\norm{v}_{\mathcal{L}_{T}^{\gamma,\theta}}(T-t)^{-\gamma(1-\tilde{\theta}/\alpha)},
\end{align*} which yields \eqref{eq:i3}.
Finally, if $(T-t)\geqslant 1$, then \eqref{eq:i3b} follows from \eqref{eq:i3} as
\begin{align*}
\norm{v_{t}}_{\calC^{\theta-\tilde{\theta}}_{p}}\lesssim \norm{v}_{\mathcal{L}_{T}^{\gamma,\theta}}(T-t)^{-\gamma(1-\tilde{\theta}/\alpha)}&\leqslant \norm{v}_{\mathcal{L}_{T}^{\gamma,\theta}}\\&\leqslant (T-t)^{\tilde{\theta}/\alpha-\gamma} [\norm{v}_{\mathcal{L}_{T}^{\gamma,\theta}}+\norm{v_{T}}_{\calC^{\theta-\tilde{\theta}}_{p}}^{1-\tilde{\theta}/\alpha}]+\norm{v_{T}}_{\calC^{\theta-\tilde{\theta}}_{p}}
\end{align*} using that $\tilde{\theta}/\alpha\geqslant \gamma$. If $(T-t)\leqslant 1$ , then \eqref{eq:i3b} follows from
\begin{align*}
\norm{v_{t}}_{\calC^{\theta-\tilde{\theta}}_{p}}\leqslant\norm{v_{t}-v_{T}}_{\calC^{\theta-\tilde{\theta}}_{p}}+\norm{v_{T}}_{\calC^{\theta-\tilde{\theta}}_{p}}
\end{align*} and
\begin{align*}
\MoveEqLeft
\norm{\Delta_{j}(v_{t}-v_{T})}_{L^{p}}\\&\lesssim \min\Big(2^{-j\theta}(T-t)^{-\gamma}\norm{v}_{\mathcal{M}_{T}^{\gamma}\calC_{p}^{\theta}}+2^{-j(\theta-\tilde{\theta})}\norm{v_{T}}_{\calC^{\theta-\tilde{\theta}}_{p}}, 2^{-j(\theta-\alpha)}(T-t)^{1-\gamma}\norm{v}_{C_{T}^{1-\gamma}\calC_{p}^{\theta-\alpha}}\Big)
\\&\leqslant \min\Big(2^{-j\theta}(T-t)^{-\gamma}\norm{v}_{\mathcal{M}_{T}^{\gamma}\calC_{p}^{\theta}}, 2^{-j(\theta-\alpha)}(T-t)^{1-\gamma}\norm{v}_{C_{T}^{1-\gamma}\calC_{p}^{\theta-\alpha}}\Big)+2^{-j(\theta-\tilde{\theta})}\norm{v_{T}}_{\calC^{\theta-\tilde{\theta}}_{p}}.
\end{align*} 
By interpolation as above, we thus have
\begin{align*}
\norm{\Delta_{j}(v_{t}-v_{T})}_{L^{p}}\lesssim 2^{-j(\theta-\tilde{\theta})}(T-t)^{\tilde{\theta}/\alpha-\gamma}\norm{v}_{\mathcal{L}_{T}^{\gamma,\theta}},
\end{align*} such that together \eqref{eq:i3b} follows.
\end{proof}
\end{section}

\begin{section}{Solving the Kolmogorov backward equation}\label{sec:singularKeq}
In this section, we develop a concise solution theory that simultaneously treats singular and non-singular terminal condition for the Kolmogorov backward equation.

\noindent We start by solving the Kolmogorov equation in the Young regime, that is $\beta>(1-\alpha)/2$. 
\begin{theorem}\label{thm:ygeneq}\footnote{The theorem is a generalization of \cite[Theorem 3.1]{kp} to regularity $\theta=\alpha+\beta$ and integrability $p\in[1,\infty]$.}
Let $\alpha\in (1,2]$, $\beta\in(\frac{1-\alpha}{2}, 0)$ and $p\in[1,\infty]$. Let $V\in\CTcalC^{\beta}_{\R^{d}}$, $f\in\CTcalC^{\beta}_{p}$ and $u^{T}\in \calC^{\alpha+\beta}_{p}$. Then the PDE 
\begin{align}\label{eq:pdem}
\partial_{t}u=\La u-V\cdot \nabla u+f,\quad u(T,\cdot)=u^{T},
\end{align}
has a unique mild solution $u\in C_{T}\calC^{\alpha+\beta}\cap C_{T}^{1}\calC^{\beta}$ (i.p.~by \eqref{eq:i2}, $u\in C_{T}^{(\alpha+\beta)/\alpha}L^{p}$). Moreover, the solution map 
\[
	\calC^{\alpha+\beta}_{p} \times \CTcalC^{\beta}_{p} \times \CTcalC^{\beta}_{\R^{d}} \ni (u^{T},f,V)\mapsto u \in \mathcal{L}_{T}^{0,\alpha+\beta}
\]
is continuous.\\
Furthermore, for a singular terminal conditions $u^{T}\in\calC^{(1-\gamma)\alpha+\beta}_{p}$ for $\gamma\in [0,1)$, the solution $u$ is obtained in $\mathcal{L}_{T}^{\gamma,\alpha+\beta}$.
\end{theorem}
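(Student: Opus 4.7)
The plan is to solve the equation by a Banach fixed-point argument applied to the mild formulation. On a sub-interval $[T - \overline T, T]$ with $\overline T \in (0, T]$ to be chosen small, define
\[
\Psi(u)(t) := P_{T-t} u^{T} + J^{T}(f - V \cdot \nabla u)(t)
\]
on the space $\mathcal{L}_{\overline{T},T}^{\gamma, \alpha+\beta}$. In the Young regime $\beta > (1-\alpha)/2$, the sum of regularities $\beta + (\alpha+\beta-1) = 2\beta + \alpha - 1$ is strictly positive, so the paraproduct estimates \eqref{eq:paraproduct-estimates} yield $\norm{V\cdot \nabla u}_{\calC^{\beta}_{p}} \lesssim \norm{V}_{\calC^{\beta}}\, \norm{u}_{\calC^{\alpha+\beta}_{p}}$, and in particular the product on the right-hand side of the equation makes sense.

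The invariance $\Psi(\mathcal L^{\gamma,\alpha+\beta}_{\overline T, T}) \subset \mathcal L^{\gamma, \alpha+\beta}_{\overline T, T}$ follows from the two Schauder estimates of \cref{cor:schauder}. Estimate \eqref{j1} with $\vartheta = \gamma\alpha$ yields $\norm{P_{T-\cdot}u^{T}}_{\mathcal L^{\gamma,\alpha+\beta}_{\overline T, T}} \lesssim \norm{u^{T}}_{\calC^{(1-\gamma)\alpha+\beta}_{p}}$, and since $V\cdot \nabla u \in \mathcal M^{\gamma}_{\overline T, T}\calC^{\beta}_{p}$, estimate \eqref{j3} bounds $\norm{J^T(f - V\cdot\nabla u)}_{\mathcal L^{\gamma,\alpha+\beta}_{\overline T, T}}$ by $\overline T^{\gamma}\norm{f}_{C_T\calC^{\beta}_{p}} + \norm{V}_{C_T\calC^{\beta}}\, \norm{u}_{\mathcal L^{\gamma,\alpha+\beta}_{\overline T, T}}$, so a ball in $\mathcal L^{\gamma, \alpha+\beta}_{\overline T, T}$ of radius depending on the data is mapped to itself.

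The main obstacle is obtaining a genuine contraction: because $V\cdot\nabla u$ inherits the same blow-up exponent $\gamma$ as $u$, the estimate \eqref{j3} with $\gamma' = \gamma$ does not supply a small prefactor in $\overline T$. The remedy exploits that the difference $w := u^{(1)} - u^{(2)}$ of two candidate fixed points vanishes at the terminal time $T$. Applying the interpolation estimate \eqref{eq:i3b} with $v_{T}=0$ and $\tilde\theta$ chosen slightly above $\alpha\gamma$ and below $2\beta + \alpha - 1$, one obtains
\[
\norm{w_t}_{\calC^{\alpha+\beta-\tilde\theta}_{p}} \lesssim (T-t)^{\tilde\theta/\alpha - \gamma}\, \norm{w}_{\mathcal L^{\gamma,\alpha+\beta}_{\overline T, T}}.
\]
Since $\tilde\theta < 2\beta + \alpha - 1$, the product $V\cdot\nabla w$ is still well-defined in $C_{\overline T, T}\calC^{\beta}_{p}$ and, critically, with \emph{no} blow-up at $T$ and with norm of order $\overline T^{\tilde\theta/\alpha - \gamma}\,\norm V_{C_T\calC^{\beta}}\,\norm w_{\mathcal L^{\gamma,\alpha+\beta}_{\overline T, T}}$. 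Feeding this into \eqref{j3} with $\gamma' = 0$ produces
\[
\norm{\Psi(u^{(1)}) - \Psi(u^{(2)})}_{\mathcal L^{\gamma,\alpha+\beta}_{\overline T, T}} \lesssim \overline T^{\eta}\, \norm V_{C_T\calC^{\beta}}\, \norm w_{\mathcal L^{\gamma,\alpha+\beta}_{\overline T, T}}
\]
for some $\eta > 0$, which is a strict contraction for $\overline T$ small enough (depending on $\norm V_{C_T\calC^{\beta}}$); for the range of $\gamma$ where the admissible window for $\tilde\theta$ is empty, the argument is closed by a preliminary bootstrap on a finer sub-splitting.

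Banach's fixed-point theorem now yields a unique local mild solution on $[T-\overline T, T]$, and this extends to $[0,T]$ by iteration over successive sub-intervals: after the first step, the value $u(T-\overline T, \cdot)$ lies in $\calC^{\alpha+\beta}_{p}$ (the blow-up factor $(T-t)^{\gamma}$ disappears away from the singular time), providing a non-singular terminal condition for the next step, so finitely many iterations cover $[0, T]$. Continuity of the solution map $(u^{T}, f, V) \mapsto u$ in the $\mathcal L_T^{\gamma,\alpha+\beta}$-norm, together with the uniform bound for bounded data on sub-intervals, then follows from the linearity of the equation and the same fixed-point estimates applied to differences of solutions.
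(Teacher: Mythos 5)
Your fixed-point framework is the same as the paper's (mild formulation, Banach FPT on a short window $[T-\overline T,T]$, then iteration), but the contraction step is done differently, and that is where the issues sit.

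\emph{What you do differently and what works:} The paper obtains the contraction by applying the interpolation estimate \eqref{eq:i3} directly to $u$: it trades $\theta\in(0,\alpha+2\beta-1)$ units of space regularity for a \emph{multiplicative} reduction of the blow-up from $\gamma$ to $\gamma'=\gamma(1-\theta/\alpha)$, and then \eqref{j3} produces the factor $\overline T^{\gamma-\gamma'}=\overline T^{\gamma\theta/\alpha}$. This works for every $\gamma\in(0,1)$. (For $\gamma=0$ that factor degenerates, which is why the paper first constructs the solution with a small artificial blow-up $\gamma>0$ and only then upgrades to $\mathcal L^{0,\alpha+\beta}_T$ using \eqref{eq:i3b}.) You instead apply \eqref{eq:i3b} to the difference $w$ of two elements of the affine subspace $\{u: u_T=u^T\}$, exploiting $w_T=0$. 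For $\gamma=0$ this is actually a clean shortcut: any $\tilde\theta\in(0,\alpha+2\beta-1)$ yields the factor $\overline T^{\tilde\theta/\alpha}$, and you get a genuine contraction directly in $\mathcal L^{0,\alpha+\beta}_{\overline T,T}$ without the two-step detour. So the headline claim of the theorem (regular terminal condition) is handled by your argument, modulo the clarification that the contraction estimate is on the affine closed subspace $\{u:u_T=u^T\}$, not on a ball — the sentence ``a ball of radius depending on the data is mapped to itself'' does not follow from the bound you stated, and is not needed once you have the contraction on that subspace.

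\emph{Where there is a genuine gap:} Your contraction hinges on \eqref{eq:i3b}, which requires $\tilde\theta\in[\alpha\gamma,\alpha]$, and you also need $\tilde\theta<\alpha+2\beta-1$ so that $V\cdot\nabla w$ is well-defined and lands in $\calC^\beta_p$. Both can hold only if $\gamma<(\alpha+2\beta-1)/\alpha$, and since $\beta<0$ we have $(\alpha+2\beta-1)/\alpha<1$, so the window is empty for $\gamma$ in a nontrivial upper range. Your remark that this is ``closed by a preliminary bootstrap on a finer sub-splitting'' is not an argument: the obstruction is an inequality between exponents, not the length of the sub-interval, and shrinking $\overline T$ does not restore the missing room for $\tilde\theta$. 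The fix is to switch to \eqref{eq:i3} for the contraction (as in the paper), whose multiplicative trade-off $\gamma\mapsto\gamma(1-\theta/\alpha)$ imposes no upper bound on $\gamma$; alternatively, you would need to spell out a genuinely different bootstrap (e.g.\ solving first with an intermediate regularity target), which is not at all evident. As written, the proof of the last sentence of the theorem (singular $u^T\in\calC^{(1-\gamma)\alpha+\beta}_p$ for arbitrary $\gamma\in[0,1)$) is incomplete.
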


\begin{proof}
Let $u^{T}\in\calC^{\alpha+\beta}_{p}$. We first prove, that the solution exists in $\mathcal{L}_{T}^{\gamma,\alpha+\beta}$ for any $\gamma\in(0,1)$. Afterwards we argue that indeed $u\in\mathcal{L}_{T}^{0,\alpha+\beta}$.
The proof follows from the Banach fixed point theorem applied to the map
\begin{align*}
\mathcal{L}_{\overline{T},T}^{\gamma,\alpha+\beta}\ni u\mapsto \Phi^{\overline T, T} (u)\in \mathcal{L}_{\overline{T},T}^{\gamma,\alpha+\beta}\text{ with }  \Phi^{\overline T, T}u(t)=P_{T-t}u^{T}+J^{T}(\nabla u\cdot V-f)(t),
\end{align*} where $J^{T}(v)(t)=\int_{t}^{T}P_{r-t}v(r)dr$. We show below, that for $\overline{T}\in (0,T]$ small enough, the map is a contraction. By the Schauder estimates (\cref{cor:schauder}), we obtain that $t\mapsto P_{T-t}u^{T}\in\mathcal{L}_{\overline{T},T}^{0,\alpha+\beta}$ and $J^{T}(f)\in\mathcal{L}_{\overline{T},T}^{0,\alpha+\beta}$. Furthermore, the Schauder estimates (\cref{cor:schauder}) and the interpolation estimate \eqref{eq:i3} from \cref{lem:int-est} yield that for $\gamma'\in (0,\gamma)$ chosen, such that $\gamma=\gamma'(1-\theta/\alpha)$ for a $\theta\in (0,\alpha+2\beta-1)$,
\begin{align*}
\norm{J^{T}(\nabla u\cdot V)}_{\mathcal{L}_{\overline{T},T}^{\gamma,\alpha+\beta}}&\lesssim \overline{T}^{\gamma-\gamma'}\norm{\nabla u\cdot V}_{\mathcal{M}_{\overline{T},T}^{\gamma '}\calC^{\beta}_{p}}\\&\lesssim \overline{T}^{\gamma-\gamma'}\norm{\nabla u}_{\mathcal{M}_{\overline{T},T}^{\gamma '}(\calC^{\alpha+\beta-1-\theta}_{p})^{d}}\norm{V}_{C_{T}\calC^{\beta}_{\R^{d}}}\\&\lesssim\overline{T}^{\gamma-\gamma'}\norm{u}_{\mathcal{L}_{\overline{T},T}^{\gamma ,\alpha+\beta}}\norm{V}_{C_{T}\calC^{\beta}_{\R^{d}}}.
\end{align*} 
Notice that due to the choice of $\theta$ the regularity of the resonant product $\nabla u\reso V$ is strictly positive.
Thus, for $\overline{T}\in (0,T]$ sufficiently small, 
$\Phi^{\overline T, T}$ is a contraction on $\mathcal{L}_{\overline{T},T}^{\gamma,\alpha+\beta}$ and we obtain a solution $u\in\mathcal{L}_{\overline{T},T}^{\gamma,\alpha+\beta}$ (i.e. the fixed point of the map).\\
By plugging the solution back in the contraction map and using the interpolation estimate \eqref{eq:i3b} for $\theta=\alpha+\beta, \tilde{\theta}=\gamma\alpha$ and $\gamma\in (0,(\alpha+2\beta-1)/\alpha)$, we then obtain
\begin{align}\label{eq:more-reg}
\norm{u}_{\mathcal{L}_{\overline{T},T}^{0,\alpha+\beta}}&=\norm{\Phi^{\overline{T},T}(u)}_{\mathcal{L}_{\overline{T},T}^{0,\alpha+\beta}}\nonumber\\&\lesssim\norm{P_{T-\cdot}u^{T}+J^{T}(f)}_{\mathcal{L}_{\overline{T},T}^{0,\alpha+\beta}}+\norm{u}_{C_{\overline{T},T}\calC^{\alpha+\beta-\gamma\alpha}}\norm{V}_{C_{T}\calC^{\beta}_{\R^{d}}}\nonumber\\&\lesssim\norm{P_{T-\cdot}u^{T}+J^{T}(f)}_{\mathcal{L}_{T}^{0,\alpha+\beta}}+[\norm{u}_{\mathcal{L}_{\overline{T},T}^{\gamma,\alpha+\beta}}+\norm{u_{T}}_{\calC^{\alpha+\beta}_{p}}]\norm{V}_{C_{T}\calC^{\beta}_{\R^{d}}}.
\end{align}
This implies that indeed $u\in \mathcal{L}^{0,\alpha+\beta}_{\overline{T},T}$ and we constructed the solution on $[T-\overline{T},T]$.\\
Moreover, the choice of $\overline T$ does not depend on the terminal condition $u^T$ and therefore we can iterate the construction of the solution on subintervals $[T-k\overline{T},T-(k-1)\overline{T}]$ for $k\in {1,\dots,n}$ and $n\in\N$ such that $T-n\overline{T}\leqslant 0$. Here, we choose the terminal condition of the solution on $[T-k\overline{T},T-(k-1)\overline{T}]$ equal to the initial value of the solution constructed in the previous iteration step. We then obtain the solution $u\in\mathcal{L}_{T}^{0,\alpha+\beta}$ on $[0,T]$ by patching the solutions on the subintervals together. Indeed, $u$ is the fixed point of $\Phi^{T,T}$, due to the semigroup property $P_{t}P_{s}=P_{t+s}$ for $t,s\geqslant 0$.\\ 
The continuity of the solution map follows from 
\begin{align*}
\norm{u}_{\mathcal{L}_{T}^{0,\alpha+\beta}}\leqslant \sum_{k=0}^{n}\norm{u}_{\mathcal{L}_{\overline{T},T-k\overline{T}}^{0,\alpha+\beta}}
\end{align*} for $n\in\N$ such that $T-(n+1)\overline{T}\leqslant 0$, together with \eqref{eq:more-reg} applied for each of the terms on the right-hand-side and the contraction property on each of the spaces $\mathcal{L}_{\overline{T},T-k\overline{T}}^{\gamma,\alpha+\beta}$.
For a terminal condition $u^{T}\in\calC^{(1-\gamma)\alpha+\beta}_{p}$, the above arguments show that we obtain a solution in $\mathcal{L}_{T}^{\gamma,\alpha+\beta}$. Notice that the blow-up just occures for the solution on the last subinterval $[T-\overline{T},T]$. That is, the solutions on $[T-k\overline{T},T-(k-1)\overline{T}]$ for $k=2,\dots,n$ have a regular terminal condition in $\mathcal{C}^{\alpha+\beta}_{p}$.  
\end{proof}
\noindent Next, we define the space of enhanced distributions and afterwards the solution space for solving the generator equation with paracontrolled terminal condition and right hand side in the rough regime $\beta\leqslant\frac{1-\alpha}{2}$. 
For that, we define for a Banach space $X$, the blow up space
\begin{align*}
\mathcal{M}_{\mathring{\Delta}_{T}}^{\gamma}X=\{g:\mathring{\Delta}_{T}\to X\mid\sup_{0\leqslant s<t\leqslant T}(t-s)^{\gamma}\norm{g(s,t)}_{X}<\infty\}
\end{align*} for the triangle without diagonal $\mathring{\Delta}_{T}:=\{(s,t)\in [0,T]^{2}\mid s<t\}$. Below we take $g(s,t)=P_{t-s}(\partial_{j}\eta^{i}_{t})\reso\eta^{j}_{s}$ for $\eta\in C_T C^{\infty}_{b}(\R^{d},\R^{d})$ and $i,j\in\{1,...,d\}$.
\begin{definition}[Enhanced drift]\label{def:roughdist}
Let $T>0$. For $\beta\in (\frac{2-2\alpha}{3},\!\frac{1-\alpha}{2}]$ and $\gamma\in [\frac{2\beta+2\alpha-1}{\alpha},1)$, we define the space of enhanced drifts $\mathcal{X}^{\beta,\gamma}$ as
the closure of
\begin{align*}
\{(\eta,\calK (\eta)):= \paren[\big]{\eta,\paren[\big]{\sum_{i=1}^{d}P_{\cdot}(\partial_{i}\eta^{j})\reso\eta^{i}}_{j=1,...,d}}:\, \eta\in C_T C^{\infty}_{b}(\R^{d},\R^{d})\}
\end{align*}
in $\CTcalC^{\beta+(1-\gamma)\alpha}_{\R^{d}}\times \mathcal{M}_{\mathring{\Delta}_{T}}^{\gamma}\calC^{2\beta+\alpha-1}_{\R^{d\times d}}$. 
We say that $\mathcal{V}$ is a lift or an enhancement of $V$ if $\mathcal{V}_{1}=V$ and we also write $V\in\mathcal{X}^{\beta,\gamma}$ identifying $V$ with $(\calV_{1},\calV_{2})$.\\
For $\beta\in (\frac{1-\alpha}{2},0)$ and $\gamma\in [\frac{\beta-1}{\alpha},1)$, we set $\mathcal{X}^{\beta,\gamma}=C_{T}\calC^{\beta+(1-\gamma)\alpha}$.
\end{definition}
\begin{remark}\label{rem:past-enhanced-drift}
For $\calV\in\calX^{\beta,\gamma}$, we assume on the first component $\calV_{1}\in C_{T}\calC^{\beta+\alpha(1-\gamma)}$. We think of $\gamma\sim 1$, that is $\gamma<1$, but very close to $1$.
The assumptions on $\calV$ in particular imply by the semigroup estimates, that $t\mapsto P_{T-t}V_{T}^{i}\in\mathcal{M}_{T}^{\gamma}\calC^{\alpha+\beta}$.
Furthermore, from $\sum_{i}P(\partial_{i} V^{j})\reso V^{i}\in\mathcal{M}_{\mathring{\Delta}_{T}}^{\gamma}\calC^{2\beta+\alpha-1}$ follows that $t\mapsto \sum_{i}J^{T}(\partial_{i} V^{j})_{t}\reso V^{i}_{t}=\sum_{i}\int_{t}^{T}P_{r-t}(\partial_{i} V_{r}^{j})\reso V_{t}^{i}dr\in C_{T}\calC^{2\beta+\alpha-1}$.
Indeed, as $\gamma<1$, we can estimate 
\begin{align*}
\sup_{t\in[0,T]}\norm[\big]{\sum_{i=1}^{d}J^{T}(\partial_{i} V^{j})_{t}\reso V^{i}_{t}}_{\alpha+2\beta-1}&\leqslant\sup_{t\in[0,T]}\int_{t}^{T}\norm[\big]{\sum_{i}P_{u-t}(\partial_{i} V_{u}^{j})\reso V_{t}^{i}}_{\alpha+2\beta-1}du\\&\leqslant\norm[\big]{\sum_{i}P_{\cdot}(\partial_{i} V^{j})\reso V^{i}}_{\mathcal{M}_{\mathring{\Delta}_{T}}^{\gamma}\calC^{2\beta+\alpha-1}_{\R^{d}}}\sup_{t\in[0,T]}\int_{t}^{T}(u-t)^{-\gamma}du\\&\lesssim\norm[\big]{\sum_{i}P_{\cdot}(\partial_{i} V^{j})\reso V^{i}}_{\mathcal{M}_{\mathring{\Delta}_{T}}^{\gamma}\calC^{2\beta+\alpha-1}_{\R^{d}}}\times T^{1-\gamma},
\end{align*} using that $\gamma<1$. 
Analogously we obtain that $\sum_{i}J^{r}(\partial_{i} V^{j})\reso V^{i}\in C_{[0,r]}\calC^{\alpha+2\beta-1}$ with a uniform bound in $r\in (0,T]$. The assumptions on the enhancement will become handy, as soon as we consider paracontrolled solutions on subintervals of $[0,T]$.
\end{remark}
\begin{remark}
We assume the lower bound on $\gamma$ to ensure, that the regularity of $V$, respectively the regularity of the resonant products $\sum_{i}J^{T}(\partial_{i} V^{j})_{t}\reso V^{i}_{t}$ are negative. 
That is, for $\gamma<(2\beta+2\alpha-1)/\alpha$, we obtain that $\sum_{i}J^{T}(\partial_{i} V^{j})_{t}\reso V^{i}_{t}\in C_{T}\calC^{2\beta+(2-\gamma)\alpha-1}$ due to $V\in C_{T}\calC^{\beta+(1-\gamma)\alpha}$ with $2\beta+(2-\gamma)\alpha-1\geqslant 0$.
In this case, $V$ has enough regularity, so that the Kolmogorov PDE can be solved with the classical approach. We exclude this case here, as we explicitly treat the singular case.  
\end{remark}

\begin{definition}\label{def:paradist}
Let $\alpha\in (1,2]$ and $\beta\in(\frac{2-2\alpha}{3},\frac{1-\alpha}{2}]$. Let $T>0$ and $V\in\cal{X}^{\beta,\gamma'}$ for $\gamma'\in [\frac{2\beta+2\alpha-1}{\alpha},1)$ and let $u^{T,\prime}\in\calC^{\alpha+\beta-1}_p$. 
For $\gamma\in (\gamma',\frac{\alpha}{2-\alpha-3\beta}\gamma')$ and $\overline{T}\in (0,T]$, we define the space of paracontrolled distributions $\mathcal{D}_{\overline{T},T}^{\gamma} = \mathcal{D}_{\overline{T},T}^{\gamma,\gamma'}(\calV, u^{T,\prime})$ as the set of tuples $(u,u')\in \mathcal{L}_{\overline{T},T}^{\gamma',\alpha+\beta}\times (\mathcal{L}_{\overline{T},T}^{\gamma,\alpha+\beta-1})^{d}$, such that 
\begin{align*}
u^{\sharp}:=u-u'\para J^{T}(V)-u^{T,\prime}\para P_{T-\cdot}V_{T}\in \mathcal{L}_{\overline{T},T}^{\gamma,2(\alpha+\beta)-1}.
\end{align*} 
We define a metric on $\mathcal{D}_{\overline{T},T}^{\gamma}$ by 
\begin{align*}
d_{\mathcal{D}_{\overline{T},T}^{\gamma}}((u,u'),(v,v')) &:=\norm{u-v}_{\mathcal{D}_{\overline{T},T}^{\gamma}} \\&:= \norm{u-v}_{\mathcal{L}_{\overline{T},T}^{\gamma',\alpha+\beta}}
		+\norm{u'-v'}_{(\mathcal{L}_{\overline{T},T}^{\gamma,\alpha+\beta-1})^{d}}+\norm{u^{\sharp}-v^{\sharp}}_{\mathcal{L}_{\overline{T},T}^{\gamma,2(\alpha+\beta)-1}}.
\end{align*} 
Then, $(\mathcal{D}_{\overline{T},T}^{\gamma},d_{\mathcal{D}_{\overline{T},T}^{\gamma}})$ is a complete metric space. If moreover $(v,v') \in \mathcal{D}_{\overline{T},T}^{\gamma,\gamma'}(\mathcal{W}, v^{T,\prime})$ for different data $(\mathcal{W},v^{T,\prime})\in\calX^{\beta,\gamma'}\times\calC^{\alpha+\beta-1}_p$, then we use the same definition for $\norm{u-v}_{\mathcal{D}_{\overline{T},T}^{\gamma}}$, despite the fact that $(u,u')$ and $(v,v')$ do not live in the same space.
\end{definition}
\begin{remark}
The intuition behind the paracontrolled ansatz is as follows. Assume for simplicitiy regular data $(u^{T},f)\in\calC^{2(\alpha+\beta)-1}_p\times\mathcal{L}_{T}^{0,\alpha+2\beta-1}$.  Assume also that we found a solution $u\in \mathcal{L}_{T}^{0,\alpha+\beta}$
and that we can make sense of the resonant product $\nabla u\reso V$ in such a way that it has its natural regularity $C_{T}\calC^{2\beta+\alpha-1}_p$, despite the fact that $2\beta+\alpha-1\leqslant 0$. Then we would get that
\begin{align*}
u^{\sharp}:&=u-\nabla u\para J^{T}(V)\\&=\squeeze[1]{P_{T-\cdot}u^{T}-J^{T}(f)+J^{T}(\nabla u\arap V)+J^{T}(\nabla u\reso V)+(J^{T}(\nabla u\para V)-\nabla u\para J^{T}(V))}
\end{align*} is more regular than $u$. Indeed, by the Schauder estimates for the first four terms and by the commutator estimate from \cref{lem:sharp}, we obtain that $u^{\sharp}\in\mathcal{L}_{T}^{0,2(\alpha+\beta)-1}$. This explains why the paracontrolled ansatz might be justified. The reason why the ansatz is useful is that it isolates the singular part of $u$ in a paraproduct, that we can handle by commutator estimates and the assumptions on $V$.
\end{remark}
\noindent Our main theorem of this section is the following. We give its proof after the corollary below.
\begin{theorem}\label{thm:singular}
Let $T>0$, $\alpha\in (1,2]$, $p\in[1,\infty]$ and $\beta\in (\frac{2-2\alpha}{3},\frac{1-\alpha}{2}]$ and $\mathcal{V}\in\calX^{\beta,\gamma'}$ for $\gamma'\in [\frac{2\beta+2\alpha-1}{\alpha},1)$. 
Let 
\begin{align*}
f=f^{\sharp}+f'\para V
\end{align*} for $f^{\sharp}\in \mathcal{L}_{T}^{\gamma',\alpha+2\beta-1}$, $f'\in (\mathcal{L}_{T}^{\gamma',\alpha+\beta-1})^{d}$ and 
\begin{align*}
u^{T}=u^{T,\sharp}+u^{T,\prime}\para V_{T}
\end{align*} for $u^{T,\sharp}\in \calC^{(2-\gamma')\alpha+2\beta-1}_p$, $u^{T,\prime}\in (\calC^{\alpha+\beta-1}_p)^{d}$.\\
Then for $\gamma\in (\gamma',\frac{\alpha}{2-\alpha-3\beta}\gamma')$ there exists a unique mild solution $(u,u^{\prime})\in\mathcal{D}_{T}^{\gamma}(\calV, u^{T,\prime})$ of the singular  Kolmogorov backward PDE
\begin{align*}
\mathcal{G}^{\mathcal{V}}u=f,\qquad u(T,\cdot)=u^{T}.
\end{align*}
\end{theorem}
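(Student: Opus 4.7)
The plan is to prove existence and uniqueness via a Banach fixed point on the space $\mathcal{D}^{\gamma}_{\bar T,T}(\mathcal V,u^{T,\prime})$ of paracontrolled distributions for a sufficiently small horizon $\bar T$, followed by iteration on subintervals $[T-k\bar T,T-(k-1)\bar T]$ to reach $[0,T]$. The overall strategy mirrors that of \cref{thm:ygeneq}, but with the additional bookkeeping forced by the paracontrolled ansatz, handled throughout by the commutator estimate \cref{lem:sharp}.

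I would define the map $\Phi:\mathcal{D}^{\gamma}_{\bar T,T}\to\mathcal{D}^{\gamma}_{\bar T,T}$ by $\Phi(u,u')=(\tilde u,\tilde u')$ with
\[
\tilde u_t := P_{T-t}u^T + J^T(f-V\cdot\nabla u)(t), \qquad \tilde u'_t := f'_t - \nabla u_t.
\]
The identification of $\tilde u'$ is forced by the ansatz: after decomposing $V\cdot\nabla u$ by Bony's formula as $V\para\nabla u + \nabla u\para V + V\reso\nabla u$, only $J^T(f'\para V)$ and $-J^T(\nabla u\para V)$ produce leading $[\,\cdot\,]\para J^T V$ contributions, and \eqref{eq:c3} moves the paraproduct outside of $J^T$ at the cost of a remainder in $\mathcal{L}^{\gamma,\sigma+\varsigma+\alpha}_{\bar T,T}$.

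The heart of the argument is verifying that the remainder $\tilde u^\sharp := \tilde u - \tilde u'\para J^T V - u^{T,\prime}\para P_{T-\cdot}V_T$ lies in $\mathcal{L}^{\gamma,2(\alpha+\beta)-1}_{\bar T,T}$. It decomposes into: (a) the regular pieces $P_{T-\cdot}u^{T,\sharp}$ and $J^T(f^\sharp)$, handled by \cref{cor:schauder}; (b) three $\para$-commutators, namely the terminal-condition commutator $P_{T-\cdot}(u^{T,\prime}\para V_T)-u^{T,\prime}\para P_{T-\cdot}V_T$ together with the $J^T$-commutators involving $f'\para V$ and $\nabla u\para V$, all absorbed by \cref{lem:sharp}; (c) the mixed paraproduct $-J^T(\nabla u\arap V)$, which sits in a high-enough class directly by Schauder since $\alpha+\beta-1>0$ in our regime; and (d) the genuinely singular resonant piece $-J^T(V\reso\nabla u)$. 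The main obstacle is (d): I would substitute the paracontrolled form $\nabla u = u'\para\nabla J^T V + u^{T,\prime}\para\nabla P_{T-\cdot}V_T + \nabla u^\sharp$ (up to smoother $\nabla$-$\para$ commutators). The piece $V\reso\nabla u^\sharp$ is immediate since $2\alpha+3\beta-2>0$ under $\beta>(2-2\alpha)/3$. For the two pieces with $\para$-structure I apply Bony's resonant-paraproduct commutator $(g\para h)\reso V = g\cdot (h\reso V) + C(g,h,V)$; the trilinear remainder $C$ lies in $\mathcal{C}^{2\alpha+3\beta-2}$, well above $-\alpha$, and is thus absorbed by Schauder through $J^T$, whereas the leading product pairs $u'$ (resp.~$u^{T,\prime}$) against
\[
\sum_i (V^i\reso \partial_i J^T V^j)_t = \int_t^T \bigl(P_{r-t}(\partial_i V^j_r)\reso V^i_t\bigr)\,dr,
\]
which is exactly what $\mathcal{V}\in\mathcal{X}^{\beta,\gamma'}$ controls in $C_{\bar T,T}\mathcal{C}^{\alpha+2\beta-1}$ by \cref{rem:past-enhanced-drift} (and similarly for the terminal version with $P_{T-t}V_T$).

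Finally, linearity of $\Phi$ in $(u,u')$ means the same estimates bound the difference $\Phi(u,u')-\Phi(v,v')$, yielding a Lipschitz constant $\lesssim \bar T^{\gamma-\gamma'}$; here the strict inequality $\gamma>\gamma'$ together with the upper bound $\gamma<\frac{\alpha}{2-\alpha-3\beta}\gamma'$ are what keep all resonance and commutator regularities strictly positive. Choosing $\bar T$ small gives a contraction and hence a unique fixed point. Iteration on subintervals proceeds exactly as in \cref{thm:ygeneq}, with the additional observation that the ``new'' terminal value $u(T-(k-1)\bar T,\cdot)$ inherits a paracontrolled structure with the same derivative $u^{T,\prime}$ and a modified sharp remainder, and that by \cref{rem:past-enhanced-drift} the enhancement restricts uniformly to each subinterval. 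Uniqueness on $[0,T]$ then follows by applying the contraction argument to the difference of two candidate solutions.
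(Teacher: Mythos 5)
Your strategy — a Banach fixed point on the paracontrolled space $\mathcal{D}^{\gamma}_{\overline T,T}$ over a short horizon $\overline T$, unlocking the resonant product via the paracontrolled form of $\nabla u$, the Bony commutator $C_3$, the semigroup/$J^T$-commutators from \cref{lem:sharp}, and the enhancement $\mathcal V_2$, followed by iteration on subintervals — is precisely the paper's. Two technical points are off, however.

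First, a sign error in the map: the mild formulation is $u_t = P_{T-t}u^T + J^T(V\cdot\nabla u - f)(t)$, so the Gubinelli derivative of the fixed-point map is $\nabla u - f'$, not $f'-\nabla u$. Your $J^T(f - V\cdot\nabla u)(t)$ is the Duhamel formula for $\partial_t u = \La u + V\cdot\nabla u - f$, i.e.\ the target PDE with $V\mapsto -V$ and $f\mapsto -f$; as written, the fixed point would solve the wrong equation (easily repaired, but it should be fixed).

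Second, and more substantively, the claimed contraction constant $\lesssim\overline T^{\gamma-\gamma'}$ does not cover the $\mathcal L^{\gamma',\alpha+\beta}_{\overline T,T}$-component of the map, i.e.\ the norm of $\psi(u)$ itself. The Schauder bound \eqref{j3} yields $\|J^T v\|_{\mathcal L^{\gamma',\alpha+\beta}}\lesssim\overline T^{\gamma'-\gamma''}\|v\|_{\mathcal M^{\gamma''}\mathcal C^{\beta}_p}$ and a shrinking factor requires $\gamma''<\gamma'$. Showing that $\nabla u\cdot\mathcal V$ has blow-up $\gamma''$ strictly smaller than $\gamma'$ costs regularity, and the paper pays for this with an explicit interpolation trick: take $\gamma''=\gamma'(1-\epsilon_1/\alpha)$ and then $\epsilon_2 = \alpha-\alpha\gamma''/\gamma\in(0,\,3\beta+2\alpha-2+(1-\gamma')\alpha)$, and trade this much spatial regularity of $u$, $u'$, $u^\sharp$ (via \eqref{eq:i3}) against the excess $3\beta+2\alpha-2>0$. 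A parallel trick with $\epsilon=\alpha(1-\gamma'/\gamma)\in(0,3\beta+2\alpha-2)$ is needed in the estimate of $\nabla u\reso\mathcal V$ in $\mathcal M^{\gamma'}\mathcal C^{\alpha+2\beta-1}_p$, since $u'$ and $u^\sharp$ carry the larger blow-up $\gamma$; this is exactly where the upper bound $\gamma<\frac{\alpha}{2-\alpha-3\beta}\gamma'$ is used. Your proposal invokes the constraint on $\gamma$ only loosely ("keep regularities positive") and does not carry out either trade-off, so the contraction as stated is incomplete for the $u$-component.
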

\begin{remark}
As $\mathcal{L}_{T}^{\tilde{\gamma},\theta}\subset\mathcal{L}_{T}^{\gamma',\theta}$ and $\calC_{p}^{(2-\tilde{\gamma})\alpha+2\beta-1}\subset \calC_{p}^{(2-\gamma')\alpha+2\beta-1}$ for $\tilde{\gamma}\in [0,\gamma']$, we can in particular treat $f^{\sharp}\in \mathcal{L}_{T}^{\tilde{\gamma},\alpha+2\beta-1}$, $f^{\prime}\in \mathcal{L}_{T}^{\tilde{\gamma},\alpha+\beta-1}$ and $u^{T,\sharp}\in \calC_{p}^{(2-\tilde{\gamma})\alpha+2\beta-1}$. 
\end{remark}
\begin{remark}
Examples for right-hand-sides and terminal conditions, which are paracontrolled by $V$, respectively $V_{T}$, are the following.
Clearly we can take as a right-hand side $f=V^{i}$, i.e. $f'=e_{i}$ for the $i$-th unit vector $e_{i}$. Another example would be $f=J^{T}(\nabla V^{i})\cdot V$ for $i\in \{1,\dots,d\}$, where $f^{\sharp}=J^{T}(\nabla V^{i})\reso V+J^{T}(\nabla V^{i})\arap V$ and $f^{\prime}=J^{T}(\nabla V^{i})$. Furthermore, as a terminal condition, we can take $u^{T}=V^{i}_{T}$, i.e. $u^{T,\prime}=e_{i}$. 
\end{remark}
\noindent In the case of $u^{T,\prime}=0$, the terminal condition can still be irregular, but is such that $t\mapsto P_{T-t}u^{T}=P_{T-t}u^{T,\sharp}\in\mathcal{M}_{T}^{\gamma'}\calC^{2(\alpha+\beta)-1}_p$. As  $\frac{2\alpha+2\beta-1}{\alpha}\leqslant\gamma'$ and thus $(2-\gamma')\alpha+2\beta-1\leqslant 0$, another example for a terminal condition, that can be treated with our approach would be a distribution $u^{T}=u^{T,\sharp}\in\calC^{0}_p$. An example would be $u^{T}=\delta_{0}\in\calC^{0}_{1}$, where $\delta_{0}$ denotes the Dirac measure at $x=0$.\\ 
In the case of $u^{T,\prime}=0$ and $u^{T,\sharp}\in\calC^{2(\alpha+\beta)-1}$, the terminal condition is sufficiently regular, such that we can prove, that the solution of the equation is an element of the solution space without blow-up (provided, that $f$ admits zero blow-up).  
We define, in the case of $u^{T,\prime}=0$ and $u^{T,\sharp}\in\calC^{2(\alpha+\beta)-1}$, the paracontrolled solution space as
\begin{align*}
\squeeze[1]{D_{T}:=\mathcal{D}_{T}^{0}=\{(u,u^{\prime})\in\mathcal{L}_{T}^{0,\alpha+\beta}\times(\mathcal{L}_{T}^{0,\alpha+\beta-1})^{d}\mid u^{\sharp}:=u-u^{\prime}\para J^{T}(V)\in\mathcal{L}_{T}^{0,2(\alpha+\beta)-1}\}.} 
\end{align*}
\begin{corollary}[Regular terminal condition]\label{cor:non-singular}
Let $T>0$, $\alpha\in (1,2]$, $p\in[1,\infty]$ and $\beta\in (\frac{2-2\alpha}{3},\frac{1-\alpha}{2}]$ and $\mathcal{V}\in\calX^{\beta,\gamma'}$ for $\gamma'\in [\frac{2\beta+2\alpha-1}{\alpha},1)$. Let $f=f^{\sharp}+f^{\prime}\para V$ for $f^{\sharp}\in\mathcal{L}_{T}^{0,\alpha+2\beta-1}$ and $f^{\prime}\in\mathcal{L}_{T}^{0,\alpha+\beta-1}$ and let $u^{T}=u^{T,\sharp}\in \calC^{2\alpha+2\beta-1}_p$ be non-singular.\\
Then, there exists a unique mild solution $u\in D_{T}$ of the generator equation
\begin{align*}
\mathcal{G}^{\mathcal{V}}u=f,\qquad u(T,\cdot)=u^{T}.
\end{align*}
\end{corollary}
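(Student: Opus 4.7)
The plan is to invoke \cref{thm:singular} to produce a paracontrolled solution $(u,u')\in\mathcal{D}_T^\gamma(\calV,0)$ in the blow-up space, and then to bootstrap this to the non-blow-up space $D_T$, mimicking the Young-regime argument at the end of the proof of \cref{thm:ygeneq}.

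I would first apply \cref{thm:singular}: since $\calC^{2\alpha+2\beta-1}_p\hookrightarrow\calC^{(2-\gamma')\alpha+2\beta-1}_p$ (using $\gamma'\geqslant 0$) and $\mathcal{L}_T^{0,\theta}\hookrightarrow\mathcal{L}_T^{\gamma',\theta}$ via \eqref{eq:blow-up-inclusion}, the data $(f,u^T)$ meet the hypotheses of \cref{thm:singular} with $u^{T,\prime}=0$, yielding a unique $(u,u')\in\mathcal{D}_T^\gamma(\calV,0)$ with $u^\sharp=u-u'\para J^T(V)\in\mathcal{L}_T^{\gamma,2(\alpha+\beta)-1}$.

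For the bootstrap I would re-run the Banach fixed-point construction on a small interval $[T-\overline T,T]$, but now on the non-blow-up space $D_{\overline T,T}$. With $u^{T,\prime}=0$, the ansatz reads $u=u^\sharp+u'\para J^T(V)$, and the mild equation gives a formula of the schematic form
\begin{align*}
u^\sharp = P_{T-\cdot}u^{T,\sharp} + J^T(f^\sharp) + \bigl[J^T(u'\para V)-u'\para J^T(V)\bigr] - J^T\bigl(\nabla u\arap V + \nabla u\reso V\bigr),
\end{align*}
with $u'$ linked to $f'$ and $\nabla u$ through the equation and $\nabla u\reso V$ defined a priori via the paracontrolled structure of $u$ together with the enhancement $\calV_2$, exactly as in the proof of \cref{thm:singular}. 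Each of the four terms on the right-hand side lands in $\mathcal{L}_{\overline T,T}^{0,2(\alpha+\beta)-1}$: the first by \eqref{j1} applied to $u^{T,\sharp}\in\calC^{2\alpha+2\beta-1}_p$, the second by \eqref{j3} with $\gamma=\gamma'=0$, the third by \cref{lem:sharp} at $\gamma=\gamma'=0$ (a case the lemma explicitly covers), and the fourth by the paraproduct estimates combined with the resonant-product bound coming from the enhancement. Each estimate carries a small-time factor $\overline T^\epsilon$ for some $\epsilon>0$, so for $\overline T$ sufficiently small the fixed-point map is a contraction on $D_{\overline T,T}$; uniqueness against the $\mathcal{D}_T^\gamma$-solution from \cref{thm:singular} then follows from the embedding $D_{\overline T,T}\hookrightarrow\mathcal{D}_{\overline T,T}^\gamma(\calV,0)$.

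Since the choice of $\overline T$ can be made independently of the terminal datum, as in \cref{thm:ygeneq}, I would iterate the construction backward on the subintervals $[T-(k+1)\overline T,T-k\overline T]$, reformulating the terminal datum at each left endpoint as a paracontrolled distribution relative to the drift restricted to the current subinterval (\cref{rem:past-enhanced-drift} guarantees that the enhancement restricts compatibly), and patch together to obtain $(u,u')\in D_T$ globally. The main obstacle I foresee is not the core estimates, which are all available at $\gamma=\gamma'=0$, but the bookkeeping around the resonant product $\nabla u\reso V$ in the non-blow-up setting: one has to check that the a priori definition used in \cref{thm:singular} continues to give the required bound once $u^\sharp$ is upgraded to $\mathcal{L}^{0,\cdot}_{\overline T,T}$. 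This follows from the same resonant-product estimates with $\gamma$ replaced by $0$, but must be tracked carefully because the regularity of $V$ is at the very threshold where the enhancement becomes indispensable.
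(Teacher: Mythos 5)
The central difficulty with your bootstrap is that running a \emph{new} Banach fixed point directly on $D_{\overline{T},T}$ at $\gamma=\gamma'=0$ does not produce a contraction with the tools in the paper, and you have not supplied the extra idea that would make it work. Look at the Schauder estimate \eqref{j3} and the commutator estimate \eqref{eq:c3}: both carry a factor $\overline{T}^{\gamma-\gamma'}$, which at $\gamma=\gamma'=0$ degenerates to $1$. The same is true of the resonant-product estimate, whose small factor $\overline{T}^{\gamma-\gamma'}$ in \cref{thm:singular} is manufactured by losing $\epsilon$ regularity and interpolating via \eqref{eq:i3} between two \emph{different} blow-up exponents $\gamma>\gamma'$. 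At $\gamma=\gamma'=0$ there is no room for this, so the claim ``each estimate carries a small-time factor $\overline T^\epsilon$'' is false as stated. This is exactly why, even in the Young regime with regular terminal data (\cref{thm:ygeneq}), the paper first runs the contraction in the blow-up space $\mathcal{L}_{\overline T,T}^{\gamma,\alpha+\beta}$ with $\gamma>0$ and only afterwards upgrades the solution. One can salvage your route, but only by observing that in the contraction estimate the two competing iterates satisfy $(u-v)_T=0$, which allows the interpolation bound \eqref{eq:i3b} at $\gamma=0$ to produce the missing $(T-t)^{\epsilon/\alpha}$ decay; this key maneuver is absent from your write-up.

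The paper avoids this altogether. It accepts the blow-up solution from \cref{thm:singular} (for a small $\gamma>0$), then performs a \emph{one-time} regularity boost rather than a fresh fixed point: since $u^{T,\prime}=0$ kills the term $u^{T,\prime}\para P_{T-\cdot}V_T$ that forces a blow-up, and since the terminal condition $u^T=u^{T,\sharp}\in\calC^{2(\alpha+\beta)-1}_p$ is regular, interpolation \eqref{eq:i3b} yields non-blowing-up bounds on $\sup_t\|u_t\|_{\calC^{\alpha+\beta-\alpha\gamma}_p}$ and on $\sup_t\|u^\sharp_t\|_{\calC^{2(\alpha+\beta)-1-\alpha\gamma}_p}$. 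Choosing $\gamma\in(0,(3\beta+2\alpha-2)/\alpha)$ then makes $\nabla u\cdot\calV$ controllable in $C_T\calC^\beta_p$ via \eqref{eq:betterest}, and plugging the known solution back into the mild formulation with the Schauder and commutator estimates at $\gamma=\gamma'=0$ upgrades $u$ to $\mathcal{L}_T^{0,\alpha+\beta}$ and $u^\sharp$ to $\mathcal{L}_T^{0,2(\alpha+\beta)-1}$. No contraction at $\gamma=0$ is needed because existence is already settled; the plug-back merely improves the regularity of a given function. Your proposal correctly identifies the ingredients (Schauder, commutator, interpolation, enhancement) but miscaptures how they are combined, and the specific gap — the missing smallness mechanism at $\gamma=\gamma'=0$ — is precisely where your argument would break down.
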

\noindent The proof is deferred to page \pageref{proof:cor:non-singular}.
\begin{remark}\label{rem:other-ed-ass}
The proof of the corollary only uses that $\calV=(V, (J^{T}(\partial_{i}V^{j})\reso V^{i})_{i,j})\in C_{T}\calC^{\beta}_{\R^{d}}\times C_{T}\calC^{2\beta+\alpha-1}_{\R^{d\times d}}$, which is implied by the stronger assumption $\calV\in\calX^{\beta,\gamma'}$ (cf. \cref{rem:past-enhanced-drift}). 
\end{remark}
\begin{proof}[Proof of \cref{thm:singular}]
Let $\calV\in\calX^{\beta,\gamma'}$ with $\calV_{1}=V$, $\calV_{2}=(\sum_{i}P_{\cdot}(\partial_{i}V^{j})\reso V^{i})_{j}$ for $\gamma'\in [\frac{2\beta+2\alpha-1}{\alpha},1)$. Let $\overline{T}\in (0,T]$ to be chosen later and $\gamma\in (\gamma',\frac{\alpha}{2-\alpha-3\beta}\gamma')$. Then we define the contraction mapping as 
\begin{align}
\phi=\phi^{\overline{T},T}:\mathcal{D}^{\gamma}_{\overline{T},T}\to\mathcal{D}^{\gamma}_{\overline{T},T},\quad (u,u^{\prime})\mapsto (\psi(u),\nabla u-f^{\prime})
\end{align} for 
\begin{align}\label{eq:decomp-Keq}
\psi(u)(t)&=P_{T-t}u^{T}+J^{T}(-f)(t)+J^{T}(\nabla u\cdot \calV)(t),\quad t\in [T-\overline{T},T]\nonumber
\\&=P_{T-\cdot}u^{T,\sharp}+J^{T}(-f^{\sharp})+J^{T}(\nabla u\reso \calV)+J^{T}(V\para \nabla u)\nonumber\\&\quad+C_{1}(u^{T,\prime},V_{T})+C_{2}(-f^{\prime},V)+C_{2}(\nabla u,V)\nonumber\\&\quad+(\nabla u-f^{\prime})\para J^{T}(V)+u^{T,\prime}\para P_{T-\cdot}V_{T},
\end{align} where we define
\begin{align}
\nabla u\reso \calV&=\sum_{i=1}^{d}\partial_{i}u\reso\calV^{i}\nonumber\\&:=\sum_{i=1}^{d}[u^{\prime}\cdot(J^{T}(\partial_{i} V)\reso V^{i})+C_{3}(u^{\prime},J^{T}(\partial_{i} V),V^{i})+U^{\sharp}\reso V^{i}\label{eq:resoproduct1}\\&\qquad+u^{T,\prime}\para (P_{T-\cdot} \partial_{i} V_{T}\reso V^{i})+C_{3}(u^{T,\prime},P_{T-\cdot}\partial_{i} V_{T},V^{i})]\label{eq:resoproduct2},
\end{align} with $U^{\sharp}:=\partial_{i} u^{\sharp}+\partial_{i} u^{\prime}\para J^{T}(V)+\partial_{i}u^{T,\prime}\para P_{T-\cdot}V_{T}$. The commutators are defined as follows:
\begin{align*}
C_{1}(f,g):=P_{T-\cdot}(f\para g)-f\para P_{T-\cdot}g, \quad C_{2}(u,v):=J^{T}(u\para v)-u\para J^{T}(v),
\end{align*} where $C_{1}$ denotes the commutator on the semigroup $P_{T-\cdot}$ and $C_{2}$ is the commutator from \cref{lem:sharp}. Furthermore, $C_{3}$ denotes the commuator from \cite[Lemma 2.4]{Gubinelli2015Paracontrolled}, that is 
\begin{align*}
C_{3}(f,g,h):=(f\para g)\reso h-f(g\reso h).
\end{align*} 
For the terms in \eqref{eq:resoproduct1}, we obtain with \cref{rem:past-enhanced-drift}, the paraproduct estimates and \cite[Lemma 2.4]{Gubinelli2015Paracontrolled} using that $3\beta+2\alpha-2>0$ and $2\beta+\alpha-1\leqslant 0$,
\begin{align*}
\MoveEqLeft
\norm[\big]{\sum_{i}\paren[\big]{u^{\prime}\cdot(J^{T}(\partial_{i} V)\reso V^{i})+C_{3}(u^{\prime},J^{T}(\partial_{i} V),V^{i})+U^{\sharp}\reso V^{i}}}_{\mathcal{M}_{\overline{T},T}^{\gamma'}\calC^{\alpha+2\beta-1}}\\&\lesssim \norm{\calV}_{\calX^{\beta,\gamma}}(1+\norm{\calV}_{\calX^{\beta,\gamma}})[\norm{u^{\sharp}}_{\mathcal{M}_{\overline{T},T}^{\gamma'}\calC^{2(\alpha+\beta)-1}_{p}}+\norm{u^{\prime}}_{\mathcal{M}_{\overline{T},T}^{\gamma'}(\calC^{\alpha+\beta-1}_{p})^{d}}]
\end{align*} 
For the terms in \eqref{eq:resoproduct2}, we have by the estimate on the paraproduct and the definition of the enhanced distribution space $\calX^{\beta,\gamma'}$
\begin{align*}
\norm[\big]{\sum_{i}u^{T,\prime}\para (P_{T-\cdot} \partial_{i} V_{T}\reso V^{i})}_{\mathcal{M}_{\overline{T},T}^{\gamma'}\calC_{p}^{2\beta+\alpha-1}}&\lesssim\norm{u^{T,\prime}}_{(\calC^{\alpha+\beta-1}_p)^{d}}\norm[\big]{\sum_{i}P_{T-\cdot} \partial_{i} V_{T}\reso V^{i}}_{\mathcal{M}_{\overline{T},T}^{\gamma'}(\calC^{2\beta+\alpha-1})^d}\\&\lesssim\norm{u^{T,\prime}}_{(\calC^{\alpha+\beta-1}_p)^d}\norm{\calV}_{\calX^{\beta,\gamma'}},
\end{align*} where we used that $\alpha+\beta-1>0$. By the commutator estimate for $C_{3}$ from \cite[Lemma 2.4]{Gubinelli2015Paracontrolled} 
and the estimates for the semigroup to control $P_{T-\cdot}\nabla V_{T}$, we obtain
\begin{align*}
\norm{C_{3}(u^{T,\prime},P_{T-\cdot}\partial_{i} V_{T},V^{i})}_{\mathcal{M}_{\overline{T},T}^{\gamma'}\calC_{p}^{3\beta+2\alpha-2}}\lesssim\norm{u^{T,\prime}}_{(\calC^{\alpha+\beta-1}_p)^d}\norm{\calV}_{\calX^{\beta,\gamma'}}^{2},
\end{align*} using again $2\alpha+3\beta-2>0$ by the assumption on $\beta$.\\
Define $\epsilon:=\alpha-\alpha\frac{\gamma'}{\gamma}$.  Then it follows that $\epsilon\in (0,3\beta+2\alpha-2)$  by the assumption on $\gamma$.
Subtracting $\epsilon$ regularity for $u^{\prime}$ and $u^{\sharp}$, we can estimate the resonant product along the same lines as above, due to $3\beta+2\alpha-2-\epsilon>0$, obtaining
\begin{align}\label{eq:trick}
\MoveEqLeft
\norm{\nabla u\reso \calV}_{\mathcal{M}_{\overline{T},T}^{\gamma'}\calC_{p}^{2\beta+\alpha-1}}\nonumber\\&\lesssim \norm{\calV}_{\calX^{\beta,\gamma'}}(1+\norm{\calV}_{\calX^{\beta,\gamma'}})
\paren[\big]{\norm{u^{\sharp}}_{\mathcal{M}^{\gamma'}_{\overline{T},T}\calC_{p}^{2(\alpha+\beta)-1-\epsilon}}+\norm{u'}_{\mathcal{M}^{\gamma'}_{\overline{T},T}(\calC_{p}^{\alpha+\beta-1-\epsilon})^d}}\nonumber\\&\qquad+\norm{\calV}_{\calX^{\beta,\gamma'}}(1+\norm{\calV}_{\calX^{\beta,\gamma'}})\norm{u^{T,\prime}}_{(\calC^{\alpha+\beta-1}_p)^d}
\nonumber\\&\lesssim \norm{\calV}_{\calX^{\beta,\gamma'}}(1+\norm{\calV}_{\calX^{\beta,\gamma'}})[\norm{(u,u^{\prime})}_{\mathcal{D}_{\overline{T},T}^{\gamma}}+\norm{u^{T,\prime}}_{(\calC^{\alpha+\beta-1}_p)^d}].
\end{align} 
In \eqref{eq:trick}, we moreover used the interpolation bound \eqref{eq:i3} for the norm of $u^{\prime}$, that is
\begin{align*}
\norm{u^{\prime}}_{\mathcal{M}^{\gamma'}_{\overline{T},T}\calC_{p}^{\alpha+\beta-1-\epsilon}}=\norm{u^{\prime}}_{\mathcal{M}^{\gamma(1-\epsilon/\alpha)}_{\overline{T},T}\calC_{p}^{\alpha+\beta-1-\epsilon}}\lesssim\norm{u^{\prime}}_{\mathcal{L}^{\gamma,\alpha+\beta-1}_{\overline{T},T}}
\end{align*}  by the definition of $\epsilon$, and analogously for $u^{\sharp}$.
For $(u,u^{\prime}),(v,v^{\prime})\in\mathcal{D}_{\overline{T},T}^{\gamma}(\calV, u^{T,\prime})$, this also implies the Lipschitz bound:
\begin{align*}
\MoveEqLeft
\norm{\nabla u\reso \calV-\nabla v\reso \calV}_{\mathcal{M}_{\overline{T},T}^{\gamma'}\calC_{p}^{2\beta+\alpha-1}}\\&\lesssim \norm{\calV}_{\calX^{\beta,\gamma'}}(1+\norm{\calV}_{\calX^{\beta,\gamma'}})\norm{(u,u^{\prime})-(v,v^{\prime})}_{\mathcal{D}_{\overline{T},T}^{\gamma}}.
\end{align*}
Next, we show that indeed $\phi(u,u^{\prime})=(\psi(u),\nabla u-f^{\prime})\in\mathcal{D}^{\gamma}_{\overline{T},T}$ and that $\phi$ is a contraction for small enough $\overline{T}$.\\ 
Towards the first aim, we note that by \eqref{eq:decomp-Keq},
\begin{align*}
\phi(u,u^{\prime})^{\sharp}&=\psi(u)-(\nabla u-f^{\prime})\para J^{T}(V)-u^{T,\prime}\para P_{T-\cdot}V_{T}\\&=P_{T-\cdot}u^{T,\sharp}+J^{T}(-f^{\sharp})+J^{T}(\nabla u\reso V)+J^{T}(V\para \nabla u)\\&\qquad+C_{1}(u^{T,\prime},V_{T})+C_{2}(-f^{\prime},V)+C_{2}(\nabla u,V).
\end{align*}
By the Schauder estimates, we obtain $P_{T-\cdot}u^{T,\sharp}+J^{T}(f^{\sharp})\in 
\mathcal{L}_{\overline{T},T}^{\gamma',2\alpha+2\beta-1}$ 
and 
\begin{align*}
\MoveEqLeft
\norm{J^{T}(\nabla u\reso \calV)+J^{T}(V\para \nabla u)}_{\mathcal{L}^{\gamma,2\alpha+2\beta-1}_{\overline{T},T}}\\&\lesssim\overline{T}^{\gamma-\gamma'}[\norm{\nabla u\reso \calV}_{\mathcal{M}^{\gamma'}_{\overline{T},T}\calC_{p}^{\alpha+2\beta-1}}+\norm{V\para\nabla u}_{\mathcal{M}^{\gamma'}_{\overline{T},T}\calC_{p}^{\alpha+2\beta-1}}]
\\&\lesssim\overline{T}^{\gamma-\gamma'}\norm{\calV}_{\calX^{\beta,\gamma'}}(1+\norm{\calV}_{\calX^{\beta,\gamma'}})\norm{(u,u^{\prime})}_{\mathcal{D}_{\overline{T},T}^{\gamma}}\\&\qquad+ \overline{T}^{\gamma-\gamma'}\norm{ u}_{\mathcal{L}^{\gamma',\alpha+\beta}_{\overline{T},T}}\norm{V}_{C_{T}\calC^{\beta}_{\R^{d}}}
\end{align*} using the estimate for the resonant product from above.
Utilizing the commutator estimate (\cref{lem:sharp}), we obtain
\begin{align*}
\norm{C_{2}(\nabla u,V)}_{\mathcal{L}^{\gamma,2(\alpha+\beta)-1}_{\overline{T},T}}\lesssim\overline{T}^{\gamma-\gamma'}\norm{V}_{C_{T}\calC^{\beta}_{\R^{d}}}\norm{u}_{\mathcal{L}_{\overline{T},T}^{\gamma',\alpha+\beta}}\lesssim\overline{T}^{\gamma-\gamma'}\norm{V}_{C_{T}\calC^{\beta}_{\R^{d}}}\norm{(u,u^{\prime})}_{\mathcal{D}_{\overline{T},T}^{\gamma}}.
\end{align*} 
By $V_{T}\in\calC^{\beta+(1-\gamma')\alpha}_{\R^{d}}$ and $u^{T,\prime}\in(\calC_{p}^{\alpha+\beta-1})^d$ and the commutator estimate \eqref{schaudercom} for $C_{1}$ for $\vartheta=\gamma'\alpha$ and $\alpha+\beta-1\in (0,1)$ and again \cref{lem:sharp} for $C_{2}$, we have that
\begin{align*}
\MoveEqLeft
\norm{C_{1}(u^{T,\prime},V_{T})+C_{2}(f^{\prime},V)}_{\mathcal{L}^{\gamma',2(\alpha+\beta)-1}_{\overline{T},T}}\\&\lesssim\norm{V}_{C_{T}\calC^{\beta+(1-\gamma')\alpha}_{\R^{d}}}(\norm{u^{T,\prime}}_{(\calC_{p}^{\alpha+\beta-1})^d}+\norm{f^{\prime}}_{\mathcal{L}_{T}^{\gamma',\alpha+\beta-1}}).
\end{align*}
Hence, together we obtain $\phi(u,u^{\prime})^{\sharp}\in\mathcal{L}^{\gamma,2\alpha+2\beta-1}_{\overline{T},T}$.\\
Next, we show that $\psi(u)\in\mathcal{L}^{\gamma',\alpha+\beta}_{\overline{T},T}$.\\ 
Define $\gamma'':=\gamma'(1-\epsilon_{1}/\alpha)$ for a fixed $\epsilon_{1}\in (0,(\alpha+\beta-1)\wedge \frac{(1-\gamma')\alpha}{2-\alpha-3\beta})=(0,\frac{(1-\gamma')\alpha}{2-\alpha-3\beta})$ and define $\epsilon_{2}:=\alpha-\alpha\frac{\gamma''}{\gamma}$. Then it follows that $\epsilon_{2}\in (0,3\beta+2\alpha-2+(1-\gamma')\alpha)$.
Using that $V\in C_{T}\calC^{\beta+(1-\gamma')\alpha}_{\R^{d}}$ and applying twice the interpolation bound \eqref{eq:i3} (once for $u$ and once for $u^{\sharp}$ and $u^{\prime}$), an analogue estimate as for the resonant product $\norm{J^{T}(\nabla u\reso\calV)}_{\mathcal{L}_{T}^{\gamma,2\alpha+2\beta-1}}$ yields that
\begin{align*}
\MoveEqLeft
\norm{J^{T}(\nabla u\cdot \calV)}_{\mathcal{L}_{\overline{T},T}^{\gamma',\beta+\alpha}}\\&\lesssim\norm{J^{T}(\nabla u\para V)}_{\mathcal{L}_{\overline{T},T}^{\gamma',\beta+\alpha}}+\norm{J^{T}(\nabla u\arap V+\nabla u \reso\calV)}_{\mathcal{L}_{\overline{T},T}^{\gamma',2\alpha+\beta-1}}\\&\lesssim \overline{T}^{\gamma'-\gamma''}[\norm{\nabla u\para V}_{\mathcal{M}_{\overline{T},T}^{\gamma''}\calC_{p}^{\beta}}+\norm{\nabla u\arap V+ \nabla u\reso\calV}_{\mathcal{M}_{\overline{T},T}^{\gamma''}\calC_{p}^{\alpha+2\beta-1}}]\\&\lesssim \overline{T}^{\gamma'-\gamma''}\norm{\calV}_{\calX^{\beta,\gamma'}}(1+\norm{\calV}_{\calX^{\beta,\gamma'}})\\&\qquad\times[\norm{u}_{\mathcal{M}_{\overline{T},T}^{\gamma''}\calC_{p}^{\alpha+\beta-\epsilon_{1}}}+\norm{u^{\sharp}}_{\mathcal{M}_{\overline{T},T}^{\gamma''}\calC_{p}^{2(\alpha+\beta)-1-\epsilon_{2}}}+\norm{u^{\prime}}_{\mathcal{M}_{\overline{T},T}^{\gamma''}(\calC_{p}^{\alpha+\beta-1-\epsilon_{2}})^d}]\\&\lesssim \overline{T}^{\gamma'-\gamma''}\norm{\calV}_{\calX^{\beta,\gamma'}}(1+\norm{\calV}_{\calX^{\beta,\gamma'}})[\norm{u}_{\mathcal{L}_{\overline{T},T}^{\gamma',\alpha+\beta}}+\norm{u^{\sharp}}_{\mathcal{L}_{\overline{T},T}^{\gamma,2(\alpha+\beta)-1}}+\norm{u^{\prime}}_{(\mathcal{L}_{\overline{T},T}^{\gamma,\alpha+\beta-1})^d}]\\&=\overline{T}^{\gamma'-\gamma''}\norm{\calV}_{\calX^{\beta,\gamma'}}(1+\norm{\calV}_{\calX^{\beta,\gamma'}})\norm{u}_{\mathcal{D}_{\overline{T},T}^{\gamma,\alpha+\beta}}.
\end{align*} 
Thus, we obtain that
\begin{align*}
\MoveEqLeft
\norm{\psi(u)}_{\mathcal{L}^{\gamma',\alpha+\beta}_{\overline{T},T}}\\&=\norm{P_{T-\cdot}u^{T}+J^{T}(f)+J^{T}(\nabla u\cdot \calV)}_{\mathcal{L}^{\gamma',\alpha+\beta}_{\overline{T},T}}\\&\leqslant\norm{P_{T-\cdot}u^{T}}_{\mathcal{L}^{\gamma',\alpha+\beta}_{\overline{T},T}}+\norm{f}_{\mathcal{L}_{T}^{\gamma',\beta}}+\norm{J^{T}(\nabla u\cdot \calV)}_{\mathcal{L}^{\gamma',\alpha+\beta}_{\overline{T},T}}\\&\lesssim\squeeze[1]{\norm{u^{T,\sharp}}_{\calC^{(2-\gamma')\alpha+2\beta-1}_p}+\norm{u^{T,\prime}}_{(\calC^{\alpha+\beta-1}_p)^d}\norm{\calV}_{\calX^{\beta,\gamma'}}+\norm{C_{1}(u^{T,\prime},V_{T})}_{\mathcal{M}_{T}^{\gamma'}\calC_{p}^{2\alpha+2\beta-1}}}\\&\quad+\norm{f}_{\mathcal{L}_{T}^{\gamma',\beta}}+\overline{T}^{\gamma'-\gamma''}\norm{\calV}_{\calX^{\beta,\gamma'}}(1+\norm{\calV}_{\calX^{\beta,\gamma'}})\norm{u}_{\mathcal{D}_{\overline{T},T}^{\gamma,\alpha+\beta}},
\end{align*} which yields in particular $\psi(u)\in\mathcal{L}^{\gamma',\alpha+\beta}_{\overline{T},T}$. 
The Gubinelli derivative $\phi(u,u^{\prime})^{\prime}=\nabla u-f^{\prime}$, we estimate as follows
\begin{align*}
\MoveEqLeft
\norm{\nabla u-f^{\prime}}_{(\mathcal{L}^{\gamma,\alpha +\beta-1}_{\overline{T},T})^d}\\&\lesssim\norm{\nabla u}_{\mathcal{M}^{\gamma}_{\overline{T},T}(\calC_{p}^{\alpha+\beta-1})^d}+\norm{\nabla u}_{C^{1-\gamma}_{\overline{T},T}(\calC_{p}^{\beta-1})^d}+\norm{\nabla u}_{C^{\gamma,1}_{\overline{T},T}(\calC_{p}^{\beta-1})^d}+\norm{f^{\prime}}_{(\mathcal{L}^{\gamma,\alpha +\beta-1}_{\overline{T},T})^d}
\\&\lesssim \overline{T}^{\gamma-\gamma'}\paren[\big]{\norm{u}_{\mathcal{L}^{\gamma',\alpha+\beta}_{\overline{T},T}}+\norm{f^{\prime}}_{(\mathcal{L}^{\gamma',\alpha +\beta-1}_{\overline{T},T})^d}}\\&\lesssim \overline{T}^{\gamma-\gamma'}\paren[\big]{\norm{u}_{\mathcal{D}^{\gamma}_{\overline{T},T}}+\norm{f^{\prime}}_{(\mathcal{L}^{\gamma',\alpha +\beta-1}_{\overline{T},T})^d}},
\end{align*} where we exploit the fact that $\gamma-\gamma'>0$ to obtain a non-trivial factor depending on $\overline{T}$. Together with the estimate for $\psi(u)$ and $\phi(u,u^{\prime})^{\sharp}$, this yields $\phi(u,u^{\prime})=(\psi(u),\nabla u-f^{\prime})\in\mathcal{D}_{\overline{T},T}^{\gamma}$.\\
The contraction property follows using the above estimates for $\psi(u)$, $\phi(u,u^{\prime})^{\sharp}$ and $\phi(u,u^{\prime})^{\prime}$, utilizing linearity of $\phi$ and $\psi$ (for $u^{T}=0, f=0$), such that
\begin{align}\label{eq:contractionest}
\MoveEqLeft
\norm{(\psi(u),\nabla u-f^{\prime})-(\psi(v),\nabla v-f^{\prime})}_{\mathcal{D}^{\gamma}_{\overline{T},T}}\nonumber\\&=\squeeze[1]{\norm{\psi(u)-\psi(v)}_{\mathcal{L}^{\gamma',\alpha+\beta}_{\overline{T},T}}+\norm{\nabla u-\nabla v}_{(\mathcal{L}^{\gamma,\alpha+\beta-1}_{\overline{T},T})^d}+\norm{\phi(u,u^{\prime})^{\sharp}-\phi(v,v^{\prime})^{\sharp}}_{\mathcal{L}^{\gamma,2\alpha+2\beta-1}_{\overline{T},T}}}\nonumber\\&\lesssim(\overline{T}^{\gamma-\gamma'}\vee\overline{T}^{\gamma'-\gamma''})\norm{\calV}_{\calX^{\beta,\gamma'}}(1+\norm{\calV}_{\calX^{\beta,\gamma'}})\norm{(u,u^{\prime})-(v,v^{\prime})}_{\mathcal{D}_{\overline{T},T}^{\gamma}}.
\end{align} 
Now, we can choose $\overline{T}$ small enough, such that the implicit constant times the factor $(\overline{T}^{\gamma-\gamma'}\vee\overline{T}^{\gamma'-\gamma''})\norm{\calV}_{\calX^{\beta,\gamma'}}(1+\norm{\calV}_{\calX^{\beta,\gamma'}})$ is strictly less than $1$, such that $\phi=\phi^{\overline{T},T}$ is a contraction on the corresponding space $\mathcal{D}_{\overline{T},T}^{\gamma}$. It is left to show, that we can obtain a paracontrolled solution in $\mathcal{D}_{T}^{\gamma}$ on the whole interval $[0,T]$. The solution on $[0,T]$ is obtained by patching the solutions on the subintervals of length $\overline{T}$ together. Indeed, let inductively $u^{[T-\overline{T},T]}$ be the solution on the subinterval $[T-\overline{T},T]$ with terminal condition $u^{T}$ and $u^{[T-k\overline{T},T-(k-1)\overline{T}]}$ be the solution on $[T-k\overline{T},T-(k-1)\overline{T}]$ with terminal condition $u^{[T-k\overline{T},T-(k-1)\overline{T}]}_{T-(k-1)\overline{T}}=u^{[T-(k-1)\overline{T},T-(k-2)\overline{T}]}_{T-(k-1)\overline{T}}$ for $k=2,\dots, n$ and $n\in\N$, such that $T-n\overline{T}\leqslant 0$. There is a small subtlety, as we consider the solution on $[T-k\overline{T},T-(k-1)\overline{T}]$, that is paracontrolled by $J^{T}(V)$ (and not by $J^{T-(k-1)\overline{T}}(V)$). That is, for $k=2,\dots,n$, the solution has the paracontrolled structure,
\begin{align*}
\hspace{1em}&\hspace{-1em}
u^{[T-k\overline{T},T-(k-1)\overline{T}],\sharp}_{t}\\&=\squeeze[1]{u^{[T-k\overline{T},T-(k-1)\overline{T}]}_t-(\nabla u^{[T-k\overline{T},T-(k-1)\overline{T}]}_t-f^{\prime}_t)\para J^{T}(V)_{t}- u^{T,\prime}\para P_{T-t}V_{T}\in\mathcal{L}_{\overline{T},T-(k-1)\overline{T}}^{\gamma,2(\alpha+\beta)-1}}
\end{align*} 
Notice, that for $k\geqslant 2$, $u^{T,\prime}\para P_{T-t}V_{T}\in\mathcal{L}_{\overline{T},T-(k-1)\overline{T}}^{\gamma,2(\alpha+\beta)-1}$, so that term can also be seen as a part of the regular paracontrolled remainder.\\
By assumption we have that $f^{\sharp}\in\mathcal{L}_{T}^{\gamma',\alpha+2\beta-1}$ and $f^{\prime}\in(\mathcal{L}_{T}^{\gamma',\alpha+\beta-1})^d$. This implies by \eqref{eq:subinterval-blow-up} that 
\begin{align*}
&f^{\sharp}\in \mathcal{M}^{0}_{\overline{T},T-(k-1)\overline{T}}\calC^{\alpha+2\beta-1}_p\cap C^{1}_{\overline{T},T-(k-1)\overline{T}}\calC^{2\beta-1}_p,\\& f^{\prime}\in \mathcal{M}^{0}_{\overline{T},T-(k-1)\overline{T}}(\calC^{\alpha+\beta-1}_p)^d\cap C^{1}_{\overline{T},T-(k-1)\overline{T}}(\calC^{\beta-1}_p)^d
\end{align*} for $k=2,\dots,n$. 
If $u^{[T-\overline{T},T]}$ denotes the solution on $[T-\overline{T},T]$, then $u^{[T-\overline{T},T]}_{T-\overline{T}}\in\calC^{\alpha+\beta}_p$ and 
$u^{[T-\overline{T},T],\sharp}_{T-\overline{T}}\in\calC^{2\alpha+2\beta-1}_p$. Thus, for the solution on $[T-2\overline{T},T-\overline{T}]$ follows
\begin{align*}
u^{[T-2\overline{T},T-\overline{T}],\sharp}_{T-\overline{T}}&=u^{[T-2\overline{T},T-\overline{T}]}_{T-\overline{T}}-(\nabla u^{[T-2\overline{T},T-\overline{T}]}_{T-\overline{T}}-f^{\prime}_{T-\overline{T}})\para J^{T}(V)_{T-\overline{T}}-u^{T,\prime}\para P_{\overline{T}}V
\\&=u^{[T-\overline{T},T],\sharp}_{T-\overline{T}}\in\calC^{2\alpha+2\beta-1}_p.
\end{align*} 
Because we can trivially bound,
\begin{align*}
\sup_{t\in[T-2\overline{T},T-\overline{T}]}\norm{P_{T-\overline{T}-t}u^{[T-2\overline{T},T-\overline{T}],\sharp}_{T-\overline{T}}}_{\calC^{2(\alpha+\beta)-1}_p}\lesssim \norm{u^{[T-2\overline{T},T-\overline{T}],\sharp}_{T-\overline{T}}}_{\calC^{2(\alpha+\beta)-1}_p},
\end{align*} 
there is no blow-up for the solution on $[T-2\overline{T},T-\overline{T}]$ at time $t=T-\overline{T}$. Hence, the Banach fixed point argument for the map $\phi^{\overline{T},T-\overline{T}}$ yields a solution $u^{[T-2\overline{T},T-\overline{T}]}\in\mathcal{D}^{\hat{\gamma}}_{\overline{T},T-\overline{T}}$ for any small $\hat{\gamma}>0$. By plugging the solution back in the fixed point map and using the interpolation estimates (cf. the arguments in the proof of \cref{thm:ygeneq} above and \cref{cor:non-singular} below), we obtain that indeed $u^{[T-2\overline{T},T-\overline{T}]}\in\mathcal{D}^{0}_{\overline{T},T-\overline{T}}$. Proceeding iteratively, we thus obtain solutions 
\begin{align*}
u^{[T-k\overline{T},T-(k-1)\overline{T}]}\in\mathcal{D}^{0}_{\overline{T},T-(k-1)\overline{T}}\quad\text{ for }\quad k=2,\dots,n
\end{align*} and $u^{[T-\overline{T},T]}\in\mathcal{D}^{\gamma}_{\overline{T},T-(k-1)\overline{T}}$. 
Then, the solution $u$, which is patched together on the subintervals ($u_{t}:=u^{[T-k\overline{T},T-(k-1)\overline{T}]}_{t}$ for $t\in [T-k\overline{T},T-(k-1)\overline{T}]$, $k=1,\dots,n$), is indeed a fixed point of the map $\phi=\phi^{0,T}$ considered on $[0,T]$ and an element of $\mathcal{D}_{T}^{\gamma}$.
\end{proof}

\begin{proof}[Proof of \cref{cor:non-singular}]\label{proof:cor:non-singular}
By assumption, we have that $u^{T,\prime}=0$ and $u^{T,\sharp}=u^{T}\in\calC^{2(\alpha+\beta)-1}_{p}$ and $f^{\sharp},f^{\prime}$ have no blow-up. By the assumption on $\mathcal{V}$, it follows that $J^{T}(\partial_{i} V^{j})\reso V^{i} \in C_{T}\calC^{\alpha+2\beta-1}$ due to $\gamma'\in (0,1)$. Furthermore due to $u^{T,\prime}=0$ the paraproduct $u^{T,\prime}\para P_{T-\cdot}V_{T}$ in \eqref{eq:decomp-Keq} vanishes, which previously was the term that introduced a blow-up of at least $\gamma'$ for the solution. Thus, we have that $P_{T-\cdot}u^{T}\in C_{T}\calC^{2(\alpha+\beta)-1}$. 
Hence, the arguments from \cref{thm:singular} yield a paracontrolled solution $u\in\mathcal{D}_{T}^{\gamma}$ for any small $\gamma>0$, i.p. $u\in\mathcal{L}_{T}^{\gamma, \alpha+\beta}$. It remains to justify that $u\in D_{T}$. By the regular terminal condition $u^{T}\in\calC^{2(\alpha+\beta)-1}_p\subset\calC^{\alpha+\beta}_p$ and the interpolation estimate \eqref{eq:i3b}, we obtain that 
\begin{align*}
\sup_{t\in[0,T]}\norm{u_{t}}_{\calC^{\alpha+\beta-\alpha\gamma}_p}&\lesssim\norm{u}_{\mathcal{L}_{T}^{\gamma,\alpha+\beta}}+\norm{u_{T}}_{\calC^{\alpha+\beta-\alpha\gamma}_p}\\&\lesssim\norm{u}_{\mathcal{L}_{T}^{\gamma,\alpha+\beta}}+\norm{u_{T}}_{\calC^{\alpha+\beta}_p}
\end{align*} and since $u^{\sharp}_{T}=u_{T}\in\calC^{2(\alpha+\beta)-1}_p$,
\begin{align*}
\sup_{t\in[0,T]}\norm{u^{\sharp}_{t}}_{\calC^{2(\alpha+\beta)-1-\alpha\gamma}_p}&\lesssim\norm{u^{\sharp}}_{\mathcal{L}_{T}^{\gamma,2(\alpha+\beta)-1}}+\norm{u_{T}}_{\calC^{2(\alpha+\beta)-1-\gamma\alpha}_p}\\&\lesssim\norm{u^{\sharp}}_{\mathcal{L}_{T}^{\gamma,2(\alpha+\beta)-1}}+\norm{u_{T}}_{\calC^{2(\alpha+\beta)-1}_p}
\end{align*} for any small $\gamma>0$. If $\gamma$ is small enough, that is $\gamma\in (0,(3\beta+2\alpha-2)/\alpha)$, we can estimate
\begin{align}\label{eq:betterest}
\MoveEqLeft
\sup_{t\in[0,T]}\norm{\nabla u\cdot\calV(t)}_{\beta}\nonumber\\&\lesssim\norm{\calV}_{\calX^{\beta,\gamma'}}(1+\norm{\calV}_{\calX^{\beta,\gamma'}})\paren[\Big]{\sup_{t\in[0,T]}\norm{u_{t}}_{\calC^{\alpha+\beta-\alpha\gamma}_p}+\sup_{t\in[0,T]}\norm{u^{\sharp}_{t}}_{\calC^{2(\alpha+\beta)-1-\alpha\gamma}_p}}.
\end{align} 
Plugging now the solution $u$ back in the contraction map using the fixed point, i.e. $u=P_{T-\cdot}u^{T}+J^{T}(\nabla u\cdot\calV)$, and \eqref{eq:betterest}, we can use the Schauder estimates for $\gamma=\gamma'=0$, such that we obtain that indeed $u\in\mathcal{L}_{T}^{0,\alpha+\beta}$. By the commutator estimate \eqref{eq:c3} for $\gamma=\gamma'=0$ and $u\in\mathcal{L}_{T}^{0,\alpha+\beta}$, we then also obtain that $u^{\sharp}\in\mathcal{L}_{T}^{0,2(\alpha+\beta)-1}$.
\end{proof}
\noindent The next theorem proves the continuity of the solution map. The proof is similar to \cite[Theorem 3.8]{kp}, but adapted to the generalized setting for singular paracontrolled data. There are a few subtleties. First, the space $\mathcal{D}_{T}^{\gamma}(V,u^{T,\prime})$ depends on $V, u^{T,\prime}$. Furthermore due to the blow-up $\gamma>0$, one cannot simply estimate the norm $\mathcal{M}_{T}^{\gamma}\calC^{\theta}$ on the inteval $[0,T]$ by the sum of the respective blow-up norms on subintervals of $[0,T]$. In the case of regular terminal condition, that splitting issue does not occure, but we aim for continuity of the solution map in $\mathcal{L}_{T}^{0,\alpha+\beta}$. This we establish by first proving continuity of the map with values in $\mathcal{L}_{T}^{\gamma,\alpha+\beta}$ for any small $\gamma>0$ and conclude from there together with the interpolation estimates.
\begin{theorem}\label{thm:cont}
In the setting of Theorem~\ref{thm:singular}, the solution map 
\[
	(u^{T}=u^{T,\sharp}+u^{T,\prime}\para V_{T}, \,f=f^{\sharp}+f^{\prime}\para V,\,\calV)  \mapsto (u,u^{\sharp})\in \mathcal{L}_{T}^{\gamma',\alpha+\beta}\times \mathcal{L}_{T}^{\gamma,2(\alpha+\beta)-1} ,
\]
is locally Lipschitz continuous, that is,
\begin{align}\label{eq:Lip-est}
\MoveEqLeft
\norm{u-v}_{\mathcal{L}_{T}^{\gamma',\alpha+\beta}}+\norm{u^{\sharp}-v^{\sharp}}_{\mathcal{L}_{T}^{\gamma,2(\alpha+\beta)-1}}\nonumber\\&\leqslant C[\norm{u^{T,\sharp}-v^{T,\sharp}}_{\calC^{(2-\gamma')\alpha+2\beta-1}_p}+\norm{u^{T,\prime}-v^{T,\prime}}_{(\calC^{\alpha+\beta-1}_p)^{d}}\nonumber\\&\qquad+\norm{f^{\sharp}-g^{\sharp}}_{\mathcal{L}^{\gamma',\alpha+2\beta-1}_{T}}+\norm{f^{\prime}-g^{\prime}}_{(\mathcal{L}^{\gamma',\alpha+\beta-1}_{T})^{d}}+\norm{\calV-\mathcal{W}}_{\calX^{\beta,\gamma'}}]
\end{align} for a constant $C=C(T,\norm{\mathcal V},\norm{\mathcal W},\norm{u^{T}},\norm{v^{T}},\norm{f},\norm{g})>0$.\\
Furthermore, in the setting of \cref{cor:non-singular}, the solution map
\[
	(u^{T}=u^{T,\sharp},\, f=f^{\sharp}+f^{\prime}\para V,\,\calV)  \mapsto (u,u^{\sharp})\in \mathcal{L}_{T}^{0,\alpha+\beta}\times \mathcal{L}_{T}^{0,2(\alpha+\beta)-1} ,
\]
is locally Lipschitz continuous allowing for an analogue bound \eqref{eq:Lip-est} with $\gamma'=0$ for the norms of $u^{T,\sharp}-v^{T,\sharp},f^{\sharp}-g^{\sharp},f^{\prime}-g^{\prime}$ and $u^{T,\prime}=v^{T,\prime}=0$.
\end{theorem}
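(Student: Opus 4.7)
The plan is to exploit the linearity of the Kolmogorov equation together with the contraction estimate \eqref{eq:contractionest} established in the proof of \cref{thm:singular}. Denote by $\phi^{\overline{T}, T}_{\calV, u^{T}, f}$ the fixed-point map on $\mathcal{D}^{\gamma}_{\overline{T}, T}(\calV, u^{T,\prime})$ appearing in \eqref{eq:decomp-Keq}, and by $\phi^{\overline{T}, T}_{\mathcal{W}, v^{T}, g}$ its analogue for the data $(\mathcal{W}, v^{T}, g)$. On the last subinterval $[T-\overline{T}, T]$ I would write
\begin{align*}
(u,u') - (v,v') = {}& \big[\phi^{\overline{T}, T}_{\calV, u^{T}, f}(u,u') - \phi^{\overline{T}, T}_{\mathcal{W}, v^{T}, g}(u,u')\big] \\ & + \big[\phi^{\overline{T}, T}_{\mathcal{W}, v^{T}, g}(u,u') - \phi^{\overline{T}, T}_{\mathcal{W}, v^{T}, g}(v,v')\big].
\end{align*}
For $\overline{T}$ small enough (depending only on $\norm{\calV}_{\calX^{\beta,\gamma'}} \vee \norm{\mathcal{W}}_{\calX^{\beta,\gamma'}}$), \eqref{eq:contractionest} bounds the second bracket in $d_{\mathcal{D}^\gamma_{\overline{T},T}}$ by $\tfrac12 \norm{(u,u')-(v,v')}_{\mathcal{D}^\gamma_{\overline{T},T}}$, which is absorbed on the left.

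It then remains to estimate the first bracket linearly in the data differences, which I would do term by term along \eqref{eq:decomp-Keq}. Every expression involving $(\calV, u^{T}, f)$ minus the corresponding one for $(\mathcal{W}, v^{T}, g)$ is rewritten as a sum of linear pieces in $\calV-\mathcal{W}$, $u^{T,\sharp}-v^{T,\sharp}$, $u^{T,\prime}-v^{T,\prime}$, $f^{\sharp}-g^{\sharp}$ and $f'-g'$, each multiplied by a fixed combination of $(u,u')$, $\calV$, $\mathcal{W}$ and the remaining data. The Schauder and commutator bounds \cref{cor:schauder,lem:sharp}, together with the paraproduct and resonant-product estimates already used in the existence proof, supply the desired linear estimates, while the a priori norm $\norm{(u,u')}_{\mathcal{D}^\gamma_{\overline{T},T}}$ is itself linear in the data norms via the fixed-point identity. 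The delicate piece is that, since the paracontrolled remainders in \cref{def:paradist} are defined with the respective drift and derivative, $u^\sharp - v^\sharp$ carries additional cross terms such as $v' \para (J^{T}(V)-J^{T}(W))$, $(v'-g') \para J^{T}(V-W)$ and $v^{T,\prime} \para P_{T-\cdot}(V_T - W_T)$, which must be controlled by the paraproduct estimates together with the $\calX^{\beta,\gamma'}$-norm of $\calV-\mathcal{W}$.

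To globalize from $[T-\overline{T},T]$ to $[0,T]$ I would iterate over the grid $\{T-k\overline{T}\}$ used in the existence proof. For $k \geqslant 2$, the terminal data of both $(u,u')$ and $(v,v')$ at $T-(k-1)\overline{T}$ lies in $\calC^{\alpha+\beta}_p$ with paracontrolled remainder in $\calC^{2(\alpha+\beta)-1}_p$ (as established in the proof of \cref{thm:singular}), so the same contraction argument runs in the non-blow-up setting of \cref{cor:non-singular}, with the terminal-difference at $T-(k-1)\overline{T}$ controlled by the Lipschitz estimate on the previous subinterval. Since $(T-t)^\gamma \leqslant T^\gamma$ for $t \in [0, T-\overline{T}]$, the global norms on the left of \eqref{eq:Lip-est} are bounded by a finite sum of subinterval contributions, up to constants depending on $T$, $\overline{T}$ and the data norms. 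For the statement of \cref{cor:non-singular} the whole scheme applies with $\gamma=\gamma'=0$, and the upgrade from $\mathcal{L}^{\gamma,\cdot}_T$ to $\mathcal{L}^{0,\cdot}_T$ continuity is obtained exactly as at the end of the proof of \cref{cor:non-singular}, by plugging the solution difference back into its fixed-point equation and using the interpolation bound \eqref{eq:i3b}.

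The main obstacle I expect is the bookkeeping in the middle paragraph: each summand in \eqref{eq:decomp-Keq} splits into several pieces when passing from $(\calV, u^{T}, f)$ to $(\mathcal{W}, v^{T}, g)$, and one has to decide for each piece whether it belongs to the data-difference side of \eqref{eq:Lip-est} or should be absorbed into the unknown $(u,u')-(v,v')$, while simultaneously respecting the blow-up weights so that the output lands in $\mathcal{L}_T^{\gamma',\alpha+\beta}\times\mathcal{L}_T^{\gamma,2(\alpha+\beta)-1}$. This is essentially the existence proof reread as a first-order variation in the data, but the drift- and $u^{T,\prime}$-dependence of the ambient paracontrolled space forces the appearance of genuinely new cross terms that have no counterpart in the existence argument.
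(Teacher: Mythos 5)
Your proposal follows essentially the same architecture as the paper's proof: exploit linearity and the fixed-point identity, separate a contraction part (absorbed after choosing $\overline{T}$ small) from a data-difference part, bound the data-difference part with the same paraproduct/commutator/Schauder estimates used for existence, iterate over subintervals $\{T-k\overline{T}\}$, and then upgrade regularity by plugging the result back into the fixed-point map together with interpolation. Your telescoping
$(u,u')-(v,v')=[\phi_{\calV,u^{T},f}(u,u')-\phi_{\mathcal{W},v^{T},g}(u,u')]+[\phi_{\mathcal{W},v^{T},g}(u,u')-\phi_{\mathcal{W},v^{T},g}(v,v')]$
is a marginally tidier packaging than the paper's, which performs the re-bracketing $ab-cd=a(b-d)+(a-c)d$ at the level of the nonlinearity $\nabla u\cdot\calV-\nabla v\cdot\mathcal{W}$ (see \eqref{eq:product-bound}); the two are equivalent because $\phi$ is affine in $(u,u')$ and affine in the data.

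There is one place where your sketch is genuinely underspecified and where the paper's proof does real extra work: the globalization of the \emph{singular} case. You assert that ``since $(T-t)^\gamma\leqslant T^\gamma$ for $t\in[0,T-\overline{T}]$, the global norms on the left of \eqref{eq:Lip-est} are bounded by a finite sum of subinterval contributions.'' This does handle the weight, but it does not by itself reconcile the fact that on the inner subintervals you only have estimates in the non-blow-up space $\mathcal{L}^{0}_{\overline{T},T-(k-1)\overline{T}}$ with space regularity $\alpha+\beta$, whereas the Lipschitz estimate \eqref{eq:Lipest} you are iterating comes out naturally with a small blow-up $\epsilon>0$, not $0$. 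The paper closes this by a deliberate $\epsilon$-loss-and-recover argument: it first estimates $\norm{u-v}_{\mathcal{L}_T^{\gamma',\alpha+\beta-\epsilon\alpha}}$ and $\norm{u^{\sharp}-v^{\sharp}}_{\mathcal{L}_T^{\gamma,2(\alpha+\beta)-1-\epsilon\alpha}}$ (equations \eqref{eq:partition1}--\eqref{eq:sharp-partition}), subtracting the terminal conditions at each $T-(k-1)\overline{T}$ and applying the interpolation bound \eqref{eq:i3b} with $\tilde\theta=\epsilon\alpha$ to convert the subinterval $\mathcal{L}^{\epsilon}_{\overline{T},T-(k-1)\overline{T}}$ bounds into $\mathcal{M}^{0}$ bounds at reduced regularity $\alpha+\beta-\epsilon\alpha$; the lost $\epsilon\alpha$ is then recovered by plugging $u-v$ back into the contraction map once more on $[0,T]$. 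You only invoke this plug-back/interpolation step for the non-singular ($\gamma'=0$) case at the very end, so as written your singular globalization has a gap at precisely the point where the paper introduces $\epsilon$. It is a repairable gap — the mechanism is the one you already name for the $\gamma'=0$ case — but as stated the appeal to $(T-t)^\gamma\le T^\gamma$ is not sufficient.

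A minor omission: the paper explicitly uses a Gronwall argument (\cite[Appendix, Theorem 5.1]{Ethier1986}) to replace the a priori norms $\norm{u}_{\mathcal{D}^\gamma_T},\norm{v}_{\mathcal{D}^\gamma_T}$ appearing in \eqref{eq:product-bound} by norms of the input data; your statement that ``the a priori norm $\norm{(u,u')}_{\mathcal{D}^\gamma_{\overline{T},T}}$ is itself linear in the data norms via the fixed-point identity'' gestures at this but hides how the constant $C$ in \eqref{eq:Lip-est} is made to depend only on the data, which matters because it is this $C$ that governs the allowed $\overline{T}$ and hence how many iterations are needed.
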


\begin{proof}
We first prove the continuity in the case of singular paracontrolled data.\\   
Let $u$ be the solution of the PDE for $\mathcal{V}\in\mathcal{X}^{\beta,\gamma'}$, $f=f^{\sharp}+f^{\prime}\para V$ and $u^{T}=u^{T,\sharp}+u^{T,\prime}\para V_{T}$ and $v$ the solution corresponding to the data $\mathcal{W}$, $g$ and $v^{T}$. By the fixed point property we have $\phi(u,u')=(u,u')$ and $\phi(v,v')=(v,v')$ and thus $u'=\nabla u-f'$ and $v'=\nabla v-g'$. Hence, we can estimate 
\begin{align}\label{eq:u-prime-est}
\norm{u'-v'}_{(\mathcal{L}_{T}^{\gamma',\alpha+\beta-1})^d}\lesssim\norm{u-v}_{\mathcal{L}_{T}^{\gamma',\alpha+\beta}}+\norm{f'-g'}_{(\mathcal{L}_{T}^{\gamma',\alpha+\beta-1})^d}.
\end{align} 
We estimate the terms in \eqref{eq:Lip-est} by itself times a factor less than $1$, plus a term depending on $\norm{f-g}$, $\norm{\mathcal{V}-\mathcal{W}}$ and $\norm{u^{T}-v^{T}}$. Here we keep in mind that $u\in\mathcal{D}^{\gamma}_{T}(V,u^{T,\prime})$, whereas $v\in\mathcal{D}^{\gamma}_{T}(W,v^{T,\prime})$, but we explained the notation of $\norm{u - v}_{\mathcal{D}^{\gamma}_{T}}$ in \cref{def:paradist}. For that purpose, we estimate the product using re-bracketing like $ab-cd=a(b-d)+(a-c)d$ and the estimate \eqref{eq:trick} for the product, where $\gamma''<\gamma'$,
\begin{align}\label{eq:product-bound}
\MoveEqLeft
\norm{\nabla u\cdot\mathcal{V}-\nabla v\cdot\mathcal{W}}_{\mathcal{M}_{T}^{\gamma''}\mathcal{C}^{\beta}_p}
\nonumber\\&\lesssim (1+\norm{\mathcal{W}}_{\mathcal{X}^{\beta,\gamma'}})\norm{\mathcal{V}}_{\mathcal{X}^{\beta,\gamma'}}\norm{u-v}_{\mathcal{D}^{\gamma}_{T}}+(1+\norm{\mathcal{W}}_{\mathcal{X}^{\beta,\gamma'}})\norm{\mathcal{V}-\mathcal{W}}_{\mathcal{X}^{\beta,\gamma'}}\norm{v}_{\mathcal{D}^{\gamma}_{T}}\nonumber\\&\qquad + \norm{\mathcal{V}}_{\calX^{\beta,\gamma'}}\norm{u}_{\mathcal{D}^{\gamma}_{T}}\norm{\mathcal{V}-\mathcal{W}}_{\calX^{\beta,\gamma'}}\nonumber\\&\qquad + \tilde{C}(\norm{\calV},\norm{\mathcal{W}},\norm{u^{T,\prime}},\norm{v^{T,\prime}})\paren[\big]{\norm{\mathcal{V}-\mathcal{W}}_{\mathcal{X}^{\beta,\gamma'}}+\norm{u^{T,\prime}-v^{T,\prime}}_{(\calC^{\alpha+\beta-1}_p)^{d}}}.
\end{align}
Since the solution $u$ can be bounded in terms of $u^{T},f,\mathcal{V}$ by Gronwall's inequality for locally finite measures using that $\gamma,\gamma'\in (0,1)$ (cf. \cite[Appendix, Theorem 5.1]{Ethier1986}), and similarly for $v$, we conclude that
\begin{align*}
\hspace{0.6em}&\hspace{-0.6em}
\norm{\nabla u\cdot\mathcal{V}-\nabla v\cdot\mathcal{W}}_{\mathcal{M}_{T}^{\gamma''}\mathcal{C}^{\beta}_p}
\\&\lesssim\paren[\big]{(\norm{\mathcal V}_{\calX^{\beta,\gamma'}}+\norm{v}_{\mathcal{D}^{\gamma}_{T}})(1+\norm{\mathcal W}_{\calX^{\beta,\gamma'}})+\norm{\mathcal V}_{\calX^{\beta,\gamma'}}\norm{u}_{\mathcal{D}^{\gamma}_{T}}}\times\\&\hspace{23em}\paren[\Big]{\norm{u-v}_{\mathcal{D}^{\gamma}_{T}}+\norm{\mathcal{V}-\mathcal{W}}_{\calX^{\beta,\gamma'}}}\\&\qquad +\tilde{C}\paren[\big]{\norm{\mathcal{V}-\mathcal{W}}_{\mathcal{X}^{\beta,\gamma'}}+\norm{u^{T,\prime}-v^{T,\prime}}_{(\calC^{\alpha+\beta-1}_p)^{d}}}\\&\lesssim C\paren[\Big]{\norm{u-v}_{\mathcal{D}^{\gamma}_{T}}+\norm{\mathcal{V}-\mathcal{W}}_{\calX^{\beta,\gamma'}}+\norm{u^{T,\prime}-v^{T,\prime}}_{(\calC^{\alpha+\beta-1}_p)^{d}}},
\end{align*} where $C=C(\norm{\mathcal V},\norm{\mathcal W},\norm{u^{T}},\norm{v^{T}},\norm{f},\norm{g})$ is a constant, that depends on the norms of the input data on $[0,T]$.
Therefore, we obtain by the fixed point and using the estimate for $\norm{\phi(u,u^{\prime})}_{\mathcal{L}_{T}^{\gamma',\alpha+\beta}}$ from the proof of \cref{thm:singular} with $\gamma''<\gamma'$,
\begin{align*}
\MoveEqLeft
\norm{u-v}_{\mathcal{L}_{T}^{\gamma',\alpha+\beta}}\\&\lesssim \norm{u^{T,\sharp}-v^{T,\sharp}}_{\calC^{(2-\gamma')\alpha+2\beta-1}_p}+\norm{u^{T,\prime}-v^{T,\prime}}_{(\calC^{\alpha+\beta-1}_p)^d}\norm{\calV}_{\calX^{\beta,\gamma'}}\nonumber\\&\qquad+\norm{u^{T,\prime}}_{(\calC^{\alpha+\beta-1}_p)^d}\norm{\calV-\mathcal{W}}_{\calX^{\beta,\gamma'}}+\norm{f^{\prime}}_{(\mathcal{L}^{\gamma',\alpha+\beta-1}_T)^d}\norm{\calV-\mathcal{W}}_{\calX^{\beta,\gamma'}}\nonumber\\&\qquad+\norm{f^{\sharp}-g^{\sharp}}_{\mathcal{L}^{\gamma',\alpha+2\beta-1}_T}+\norm{f^{\prime}-g^{\prime}}_{(\mathcal{L}^{\gamma',\alpha+\beta-1}_T)^d}\norm{\calV}_{\calX^{\beta,\gamma'}}\nonumber\\&\qquad+T^{\gamma'-\gamma''}\norm{\nabla u\cdot\mathcal{V}-\nabla v\cdot\mathcal{W}}_{\mathcal{M}_{T}^{\gamma''}\mathcal{C}^{\beta}_p}.
\end{align*}
Moreover, using the fixed point and the estimate for $\norm{\phi(u,u')^{\sharp}}_{\mathcal{L}_{T}^{\gamma,2(\alpha+\beta)-1}}$, we obtain
\begin{align}\label{eq:rgeneq-pr1}
\MoveEqLeft
\norm{u^{\sharp}-v^{\sharp}}_{\mathcal{L}_{T}^{\gamma,2(\alpha+\beta)-1}}\nonumber\\ \nonumber
&\lesssim \norm{u^{T,\sharp}-v^{T,\sharp}}_{\calC^{(2-\gamma')\alpha+2\beta-1}_p}+\norm{u^{T,\prime}-v^{T,\prime}}_{(\calC^{\alpha+\beta-1}_p)^d}\norm{\calV}_{\calX^{\beta,\gamma'}}\\&\qquad+\norm{u^{T,\prime}}_{(\calC^{\alpha+\beta-1}_p)^d}\norm{\calV-\mathcal{W}}_{\calX^{\beta,\gamma'}}+\norm{f^{\prime}}_{(\mathcal{L}^{\gamma',\alpha+\beta-1}_T)^d}\norm{\calV-\mathcal{W}}_{\calX^{\beta,\gamma'}}\nonumber\\&\qquad+\norm{f^{\sharp}-g^{\sharp}}_{\mathcal{L}^{\gamma',\alpha+2\beta-1}_T}+\norm{f^{\prime}-g^{\prime}}_{(\mathcal{L}^{\gamma',\alpha+\beta-1}_T)^d}\norm{\calV}_{\calX^{\beta,\gamma'}}\nonumber\\&\qquad+T^{\gamma-\gamma'}\norm{\nabla u\cdot\mathcal{V}-\nabla v\cdot\mathcal{W}}_{\mathcal{M}_{T}^{\gamma'}\mathcal{C}^{\beta}_p}\nonumber
\\&\qquad+T^{\gamma-\gamma'}\norm{\mathcal{V}-\mathcal{W}}_{\mathcal{X}^{\beta,\gamma'}}\norm{u}_{\mathcal{D}_{T}^{\gamma}} + T^{\gamma'-\gamma}\norm{\mathcal{V}}_{\mathcal{X}^{\beta,\gamma'}}\norm{u-v}_{\mathcal{D}_{T}^{\gamma}}.
\end{align}
To shorten notation, let us abbreviate the term in \eqref{eq:Lip-est}, that we aim to estimate, in  the following by
\begin{align*}
\norm{u-v}_{\gamma,\alpha+\beta}:=\norm{u-v}_{\mathcal{L}_{T}^{\gamma',\alpha+\beta}}+\norm{u^{\sharp}-v^{\sharp}}_{\mathcal{L}_{T}^{\gamma,2(\alpha+\beta)-1}}.
\end{align*}
Then overall, using also \eqref{eq:u-prime-est}, we obtain 
\begin{align*}
\norm{u-v}_{\gamma,\alpha+\beta}&\leqslant C[\norm{u^{T,\sharp}-v^{T,\sharp}}_{\calC^{(2-\gamma')\alpha+2\beta-1}_p}+\norm{u^{T,\prime}-v^{T,\prime}}_{(\calC^{\alpha+\beta-1}_p)^d}\nonumber\\&\qquad+\norm{f^{\sharp}-g^{\sharp}}_{\mathcal{L}^{\gamma',\alpha+2\beta-1}_T}+\norm{f^{\prime}-g^{\prime}}_{(\mathcal{L}^{\gamma',\alpha+\beta-1}_T)^d}+\norm{\calV-\mathcal{W}}_{\calX^{\beta,\gamma'}}]\nonumber \\&\qquad+(T^{\gamma-\gamma'}\vee T^{\gamma'-\gamma''})C\norm{u-v}_{\gamma,\alpha+\beta},
\end{align*} where $C> 0$ is again a (possibly different) constant depending on the norms of the input data.
Assume for the moment that $T$ is small enough so that $(T^{\gamma-\gamma'}\vee T^{\gamma'-\gamma''})C$ times the implicit constant on the right-hand side is $<1$. Then we can take the last term to the other side and divide by a positive factor, obtaining 
\begin{align}\label{eq:Lipest}
\norm{u-v}_{\gamma,\alpha+\beta}&\leqslant C[\norm{u^{T,\sharp}-v^{T,\sharp}}_{\calC^{(2-\gamma')\alpha+2\beta-1}_p}+\norm{u^{T,\prime}-v^{T,\prime}}_{(\calC^{\alpha+\beta-1}_p)^d}\nonumber\\&\qquad+\norm{f^{\sharp}-g^{\sharp}}_{\mathcal{L}^{\gamma',\alpha+2\beta-1}_T}+\norm{f^{\prime}-g^{\prime}}_{(\mathcal{L}^{\gamma',\alpha+\beta-1}_T)^d}+\norm{\calV-\mathcal{W}}_{\calX^{\beta,\gamma'}}],
\end{align}
where $C=C(T,\norm{\mathcal V},\norm{\mathcal W},\norm{u^{T}},\norm{v^{T}},\norm{f},\norm{g})>0$ is a constant that depends on the norms of the input data.
Thus, the map $(u^{T},f,\mathcal V)\mapsto (u,u^{\sharp})$ is locally Lipschitz continuous, which implies the claim.\\
If $T$ is such that $(T^{\gamma-\gamma'}\vee T^{\gamma'-\gamma''})C$ times the implicit constant is at least $1$, then we want to apply the estimates above on the subintervals $[T-k\overline{T},T-(k-1)\overline{T}]$ of length $\overline{T}$, where $\overline{T}$ is chosen, such that $(\overline{T}^{\gamma-\gamma'}\vee \overline{T}^{\gamma'-\gamma''})C$ times the implicit constant is strictly less than $1$ and where $k=1,\dots,n$ for $n\in\N$ with $T-n\overline{T}\leqslant 0$. To obtain the continuity in $\mathcal{D}^{\gamma}_{T}$, we consider the solutions $u^{[T-k\overline{T},T-(k-1)\overline{T}]},v^{[T-k\overline{T},T-(k-1)\overline{T}]}$ on the subintervals $[T-k\overline{T},T-(k-1)\overline{T}]$ for $k=1,\dots n$, where the terminal condition of the solution $u^{[T-k\overline{T},T-(k-1)\overline{T}]}$ is the initial value of the solution $u^{[T-(k-1)\overline{T},T-(k-2)\overline{T}]}$ (analogously for $v$), such that, patched together, we obtain the solutions $u,v$ on $[0,T]$.\\ Let $\epsilon >0$ to be chosen below.\\ For $k=2,\dots,n$, we have that $u^{[T-k\overline{T},T-(k-1)\overline{T}]},v^{[T-k\overline{T},T-(k-1)\overline{T}]}\in \mathcal{D}_{T-(k-1)\overline{T}}^{0,\alpha+\beta}$ (see the argument in the proof of \cref{thm:singular}), such that we can estimate 
\begin{align}\label{eq:partition1}
\MoveEqLeft
\norm{u-v}_{\mathcal{M}_{T}^{\gamma'}\calC^{\alpha+\beta-\epsilon\alpha}_p}\nonumber\\&\leqslant T^{\gamma'}\norm{u-v}_{\mathcal{M}^{0}_{T-\overline{T}}\calC^{\alpha+\beta-\epsilon\alpha}_p}+\norm{u-v}_{\mathcal{M}_{\overline{T},T}^{\gamma'}\calC^{\alpha+\beta}_p}\nonumber\\&\leqslant T^{\gamma'}\sum_{k=2}^{n}\norm{u-v}_{\mathcal{M}^{0}_{\overline{T},T-(k-1)\overline{T}}\calC^{\alpha+\beta-\epsilon\alpha}_p}+\norm{u-v}_{\mathcal{M}^{\gamma'}_{\overline{T},T}\calC^{\alpha+\beta}_p}.
\end{align} 
Furthermore, we can estimate for $\epsilon\in (0,\gamma']$,
\begin{align}\label{partition2}
\norm{u-v}_{C_{T}^{1-\gamma'}\calC^{\beta}_p}&\leqslant T^{\gamma'-\epsilon}\norm{u-v}_{C_{T-\overline{T}}^{1-\epsilon}\calC^{\beta}_p}+\norm{u-v}_{C_{\overline{T},T}^{1-\gamma'}\calC^{\beta}_p}\nonumber
\\&\leqslant T^{\gamma'-\epsilon}\sum_{k=2}^{n}\norm{u-v}_{\mathcal{L}_{\overline{T},T-(k-1)\overline{T}}^{\epsilon,\beta+\alpha}}+\norm{u-v}_{\mathcal{L}_{\overline{T},T}^{\gamma',\beta+\alpha}}.
\end{align}
Subtracting the terminal condition for each of the terms with $k=2,\dots,n$ and applying the interpolation bound \eqref{eq:i3b} for $\theta=\alpha+\beta$, $\tilde{\theta}=\epsilon\alpha$ yields
for $k=2,\dots,n$,
\begin{align}\label{eq:int-bound}
\MoveEqLeft
\norm{(u-u_{T-(k-1)\overline{T}})-(v-v_{T-(k-1)\overline{T}})}_{\mathcal{M}^{0}_{\overline{T},T-(k-1)\overline{T}}\calC^{\alpha+\beta-\epsilon\alpha}_p}\nonumber\\&\leqslant \norm{(u-u_{T-(k-1)\overline{T}})-(v-v_{T-(k-1)\overline{T}})}_{\mathcal{M}^{\epsilon}_{\overline{T},T-(k-1)\overline{T}}\calC^{\alpha+\beta}_p}.
\end{align}
Together with \eqref{eq:partition1}, \eqref{partition2}  and \eqref{eq:int-bound}, this then yields
\begin{align}\label{eq:partition}
\hspace{0.1em}&\hspace{-0.1em}
\norm{u-v}_{\mathcal{L}_{T}^{\gamma',\alpha+\beta-\epsilon\alpha}}\nonumber
\\&\lesssim \squeeze[1]{T^{\gamma'}\sum_{k=2}^{n}\paren[\big]{\norm{(u-u_{T-(k-1)\overline{T}})-(v-v_{T-(k-1)\overline{T}})}_{\mathcal{L}_{\overline{T},T-(k-1)\overline{T}}^{0,\alpha+\beta-\epsilon\alpha}}+\norm{u_{T-(k-1)\overline{T}}-v_{T-(k-1)\overline{T}}}_{\calC^{\alpha+\beta}_p}}}\nonumber\\&\qquad\quad+\norm{u-v}_{\mathcal{L}_{\overline{T},T}^{\gamma',\alpha+\beta}}\nonumber
\\&\lesssim \squeeze[1]{T^{\gamma'}\sum_{k=2}^{n}\paren[\big]{\norm{(u-u_{T-(k-1)\overline{T}})-(v-v_{T-(k-1)\overline{T}})}_{\mathcal{L}_{\overline{T},T-(k-1)\overline{T}}^{\epsilon,\alpha+\beta}}+\norm{u_{T-(k-1)\overline{T}}-v_{T-(k-1)\overline{T}}}_{\calC^{\alpha+\beta}_p}}}\nonumber\\&\qquad\quad+\norm{u-v}_{\mathcal{L}_{\overline{T},T}^{\gamma',\alpha+\beta}}\nonumber
\\&\lesssim T^{\gamma'}\sum_{k=2}^{n}\norm{u-v}_{\mathcal{L}_{\overline{T},T-(k-1)\overline{T}}^{\epsilon,\alpha+\beta}}+\norm{u-v}_{\mathcal{L}_{\overline{T},T}^{\gamma',\alpha+\beta}},
\end{align}
where in the last estimate, we estimated the norm of the terminal conditions by the norm of the solutions in the previous iteration step.\\
Analogously, we can argue for $u^{\sharp}-v^{\sharp}$, obtaining
\begin{align}\label{eq:sharp-partition}
\MoveEqLeft
\norm{u^{\sharp}-v^{\sharp}}_{\mathcal{L}_{T}^{\gamma,2(\alpha+\beta)-1-\epsilon\alpha}}\nonumber
\\&\lesssim T^{\gamma}\sum_{k=2}^{n}\norm{u^{\sharp}-v^{\sharp}}_{\mathcal{L}_{\overline{T},T-(k-1)\overline{T}}^{\epsilon,2(\alpha+\beta)-1}}+\norm{u^{\sharp}-v^{\sharp}}_{\mathcal{L}_{\overline{T},T}^{\gamma,2(\alpha+\beta)-1}}.
\end{align}
Now, taking $\epsilon:=(\gamma-\gamma')\in (0,\gamma')$, we can apply the above estimate \eqref{eq:Lipest} for each of the terms on the right-hand side of the inequalities \eqref{eq:partition} and \eqref{eq:sharp-partition}. That is, for each of the terms for $k=2,\dots,n$, we obtain
\begin{align*}
\hspace{1.5em}&\hspace{-1.5em}
\norm{u-v}_{\mathcal{L}_{\overline{T},T-(k-1)\overline{T}}^{\epsilon,\alpha+\beta}}+\norm{u^{\sharp}-v^{\sharp}}_{\mathcal{L}_{\overline{T},T-(k-1)\overline{T}}^{\epsilon,2(\alpha+\beta)-1}}\\&\lesssim \frac{1}{\squeeze[1]{1-\overline{T}^{\epsilon}\norm{\calV}_{\calX^{\beta,\gamma'}}(1+\norm{\calV}_{\calX^{\beta,\gamma'}})}}\bigg[\norm{u^{T,\sharp}-v^{T,\sharp}}_{\calC^{(2-\gamma')\alpha+2\beta-1}_p}+\norm{u^{T,\prime}-v^{T,\prime}}_{(\calC^{\alpha+\beta-1}_p)^d}\nonumber\\&\qquad+\norm{f^{\sharp}-g^{\sharp}}_{\mathcal{L}^{\gamma',\alpha+2\beta-1}_T}+\norm{f^{\prime}-g^{\prime}}_{(\mathcal{L}^{\gamma',\alpha+\beta-1}_T)^d}+\norm{\calV-\mathcal{W}}_{\calX^{\beta,\gamma'}}\bigg].
\end{align*}
This uses that by the choice of $\epsilon$, $\overline{T}^{\epsilon}=\overline{T}^{\gamma-\gamma'}\leqslant \overline{T}^{\gamma-\gamma'}\vee \overline{T}^{\gamma'-\gamma''}$ and that $u,v\in \mathcal{D}_{T-(k-1)\overline{T}}^{0,\alpha+\beta}$ for $k=2,\dots,n$.
For $k=1$, we replace $\epsilon$ by $\gamma$, respectively $\gamma'$ for $u^{\sharp}-v^{\sharp}$, and obtain the estimate \eqref{eq:Lipest} on the subinterval $[T-\overline{T},T]$.
Together, this then yields the following estimate on the whole interval $[0,T]$ (with a possibly different constant $C$): 
\begin{align}\label{eq:Lipest-2}
\norm{u-v}_{\gamma,\alpha+\beta-\epsilon\alpha }&\leqslant C[\norm{u^{T,\sharp}-v^{T,\sharp}}_{\calC^{(2-\gamma')\alpha+2\beta-1}_p}+\norm{u^{T,\prime}-v^{T,\prime}}_{(\calC^{\alpha+\beta-1}_p)^d}\nonumber\\&\quad+\norm{f^{\sharp}-g^{\sharp}}_{\mathcal{L}^{\gamma',\alpha+2\beta-1}_{T}}+\norm{f^{\prime}-g^{\prime}}_{(\mathcal{L}^{\gamma',\alpha+\beta-1}_{T})^d}+\norm{\calV-\mathcal{W}}_{\calX^{\beta,\gamma'}}].
\end{align}
Plugging now $u-v$ back in the contraction map on $[0,T]$, we can remove the loss $\epsilon\alpha$ in regularity. That is, we can estimate for $\epsilon$ small enough, 
\begin{align*}
\norm{u-v}_{\gamma,\alpha+\beta}&\lesssim \norm{\calV}_{\calX^{\beta,\gamma'}}(1+\norm{\calV}_{\calX^{\beta,\gamma'}})\norm{u-v}_{\gamma,\alpha+\beta-\alpha\epsilon} \\&\quad + C[\norm{u^{T,\sharp}-v^{T,\sharp}}_{\calC^{(2-\gamma')\alpha+2\beta-1}_p}+\norm{u^{T,\prime}-v^{T,\prime}}_{(\calC^{\alpha+\beta-1}_p)^d}\nonumber\\&\quad+\norm{f^{\sharp}-g^{\sharp}}_{\mathcal{L}^{\gamma',\alpha+2\beta-1}_{T}}+\norm{f^{\prime}-g^{\prime}}_{(\mathcal{L}^{\gamma',\alpha+\beta-1}_{T})^d}+\norm{\calV-\mathcal{W}}_{\calX^{\beta,\gamma'}}].
\end{align*} 
Thus the local Lipschitz continuity \eqref{eq:Lip-est} on $[0,T]$ follows.\\
In the setting of \cref{cor:non-singular}, we obtain from the above, that the Lipschitz estimate \eqref{eq:Lip-est} holds true with $\gamma'=0$ for the norms of $u^{T,\sharp}-v^{T,\sharp},f^{\sharp}-g^{\sharp},f^{\prime}-g^{\prime}$ and $u^{T,\prime}=v^{T,\prime}=0$ on the right-hand side and any small $\gamma>0$ on the left-hand-side of the estimate. Similar as in the proof of \cref{cor:non-singular}, we can use the fixed point property and the estimate \eqref{eq:betterest} for small enough $\gamma>0$, together with the Schauder estimates and the interpolation bound \eqref{eq:i3b}, to obtain that
\begin{align*}
\hspace{0.3em}&\hspace{-0.3em}
\norm{u-v}_{\mathcal{L}_{T}^{0,\alpha+\beta}}+\norm{u^{\sharp}-v^{\sharp}}_{\mathcal{L}_{T}^{0,2(\alpha+\beta)-1}}\\&\lesssim \norm{\calV}_{\calX^{\beta,\gamma'}}(1+\norm{\calV}_{\calX^{\beta,\gamma'}})\paren[\Big]{\norm{u-v}_{\mathcal{L}_{T}^{0,\alpha+\beta-\gamma\alpha}}+\norm{u^{\sharp}-v^{\sharp}}_{\mathcal{L}_{T}^{0,2(\alpha+\beta)-1-\gamma\alpha}}}\\&\quad + \squeeze[1]{C[\norm{u^{T,\sharp}-v^{T,\sharp}}_{\calC^{2\alpha+2\beta-1}_p}+\norm{f^{\sharp}-g^{\sharp}}_{\mathcal{L}^{0,\alpha+2\beta-1}_{T}}+\norm{f^{\prime}-g^{\prime}}_{(\mathcal{L}^{0,\alpha+\beta-1}_{T})^d}+\norm{\calV-\mathcal{W}}_{\calX^{\beta,\gamma'}}]}
\\&\lesssim \norm{\calV}_{\calX^{\beta,\gamma'}}(1+\norm{\calV}_{\calX^{\beta,\gamma'}})\times\\&\qquad\qquad\quad\squeeze[1]{\paren[\Big]{\norm{(u-v)-(u^{T,\sharp}-v^{T,\sharp})}_{\mathcal{L}_{T}^{0,\alpha+\beta-\gamma\alpha}}+\norm{(u^{\sharp}-v^{\sharp})-(u^{T,\sharp}-v^{T,\sharp})}_{\mathcal{L}_{T}^{0,2(\alpha+\beta)-1-\gamma\alpha}}}}\\&\quad + \squeeze[1]{C[\norm{u^{T,\sharp}-v^{T,\sharp}}_{\calC^{2\alpha+2\beta-1}_p}+\norm{f^{\sharp}-g^{\sharp}}_{\mathcal{L}^{0,\alpha+2\beta-1}_{T}}+\norm{f^{\prime}-g^{\prime}}_{(\mathcal{L}^{0,\alpha+\beta-1}_{T})^d}+\norm{\calV-\mathcal{W}}_{\calX^{\beta,\gamma'}}]}
\\&\lesssim \norm{\calV}_{\calX^{\beta,\gamma'}}(1+\norm{\calV}_{\calX^{\beta,\gamma'}})\paren[\Big]{\norm{u-v}_{\mathcal{L}_{T}^{\gamma,\alpha+\beta}}+\norm{u^{\sharp}-v^{\sharp}}_{\mathcal{L}_{T}^{\gamma,2(\alpha+\beta)-1}}}\\&\quad +\squeeze[1]{ C[\norm{u^{T,\sharp}-v^{T,\sharp}}_{\calC^{2\alpha+2\beta-1}_p}+\norm{f^{\sharp}-g^{\sharp}}_{\mathcal{L}^{0,\alpha+2\beta-1}_{T}}+\norm{f^{\prime}-g^{\prime}}_{(\mathcal{L}^{0,\alpha+\beta-1}_{T})^d}+\norm{\calV-\mathcal{W}}_{\calX^{\beta,\gamma'}}].}
\end{align*}
Notice that, to apply the interpolation bound \eqref{eq:i3b} in the last estimate above, we subtracted the terminal condition $u^{T}-v^{T}=u^{T,\sharp}-v^{T,\sharp}$, so that $(u-v)_{T}-(u^{T}-v^{T})=0$. The constant $C$ above changes in each line.
Thus together the Lipschitz continuity of the solution map with values in $\mathcal{L}_{T}^{0,\alpha+\beta}\times \mathcal{L}_{T}^{0,2(\alpha+\beta)-1}$ follows. 
\end{proof}

\begin{remark}[Super-exponential dependency of the Lipschitz constant on $\mathcal{V},\mathcal{W}$]
The Lipschitz constant of the solution map on $[0,T]$ depends super-exponentially on the norms $\norm{\mathcal{V}}_{\mathcal{X}^{\beta,\gamma'}}$, $\norm{\mathcal{W}}_{\mathcal{X}^{\beta,\gamma'}}$. Indeed, to obtain the Lipschitz estimate of the solution map on $[0,T]$, we have to apply the estimate in \eqref{eq:rgeneq-pr1} on every subinterval $[T-(k+1)\overline{T},T-k\overline{T}]$, where we have to choose $\overline{T}$ small enough so that $\overline{T}^\kappa < C^{-1}$ for $\kappa = \gamma-\gamma' \wedge \gamma'-\gamma''$ and for the constant $C=C(\norm{\mathcal V},\norm{\mathcal W},\norm{u^{T}},\norm{v^{T}},\norm{f},\norm{g})$. This means that in \eqref{eq:Lipest} we have to iterate the estimate at least $T/C^{-\kappa} = T C^\kappa$ times, and each time we multiply with the constant $\tilde C$ in~\eqref{eq:Lipest}, leading roughly speaking to a factor $\tilde C^{T C^\kappa}$. By doing the analysis more carefully we can show that there is (super-)exponential dependence only on $\norm{\mathcal V},\norm{\mathcal W}$ and that the Lipschitz constant actually depends linearly on $\norm{u^{T}},\norm{v^{T}},\norm{f},\norm{g}$. But the super-exponential dependence on $\norm{\mathcal V},\norm{\mathcal W}$ is inherent to the problem and we expect that it cannot be significantly improved. By similar arguments, we also see that the norm of the solution $u$ to the Kolmogorov backward equation in \cref{thm:singular} depends super-exponentially on $\|\mathcal V\|$.

\noindent This will be relevant when we take $\mathcal{V}$ random, cf.~the Brox diffusion with Lévy noise in \cite{kp}. If we do not have super-exponential moments for $\norm{\mathcal{V}}_{\mathcal{X}^{\beta,\gamma'}}$, then we do not know if $u$ has finite moments. And if $V$ is Gaussian, then the second component of the lift $\mathcal V$ is a second order polynomial of a Gaussian and therefore it does not have super-exponential moments.
	
\noindent But note that this only concerns H\"older norms of the Kolmogorov backward equation. If we are only interested in the $L^{p}$ norm, $p\in[1,\infty]$, we can always use the trivial bound $\|u\|_{L^{p}} \le \|u^T\|_{L^{p}} + T \|f\|_{L^{p}}$, provided that the right-hand side is finite, which holds for smooth $V,f,u^T$ by the stochastic representation (Feynman-Kac) of the Kolmogorov backward equation, and which extends by approximation to the general setting.
\end{remark}

\noindent Next we consider solutions of the Kolmorov PDE for $\mathcal{G}^{\calV}$ (for fixed $\calV$) on subintervals $[0,r]$ of $[0,T]$ for bounded sets of terminal conditions $(y^{r})_{r\in [0,T]}$ and right-hand-sides $(f^{r})_{r\in[0,T]}$. In certain situations one is interested in a uniform bound on the norms of the solutions $(u^{r})$ on $[0,r]$. We prove the latter in the following corollary.\\
The solution $u^{r}$ on $[0,r]$ has the following paracontrolled structure
\begin{align}\label{eq:para-r}
u^{r}=u^{r,\sharp}+(\nabla u^{r}-f^{r,\prime})\para J^{r}(V)+y^{r,\prime}\para P_{r-\cdot}V_{r}
\end{align}
with
\begin{align*}
u^{r,\sharp}&=P_{r-\cdot}y^{r,\sharp}+J^{r}(-f^{\sharp})+J^{r}(\nabla u^{r}\reso V)+J^{r}(V\para \nabla u^{r})\\&\quad+C_{1}(y^{r,\prime},V_{r})+C_{2}(-f^{\prime},V)+C_{2}(\nabla u^{r},V),
\end{align*}
for the commutators from the proof of \cref{thm:singular}.
\begin{corollary}\label{thm:uniform-singular}
Let $T>0$ and $\calV\in\calX^{\beta,\gamma'}$ for $\beta,\gamma'$ as in \cref{thm:singular}. Let $\gamma\in (\gamma',1)$ and $\gamma''\in (0,\gamma')$ be as in the proof of \cref{thm:singular}. 
Let $(y^{r}=y^{r,\sharp}+y^{r,\prime}\para V_{r})_{r\in [0,T]}$ be a bounded sequence of singular paracontrolled terminal conditions, that is,
\begin{align*}
C_{y}:=\sup_{r\in[0,T]}[\norm{y^{r,\sharp}}_{\calC^{(2-\gamma')\alpha+2\beta-1}_p}+ \norm{y^{r,\prime}}_{\calC^{\alpha+\beta-1}_p}]<\infty.
\end{align*} 
Let $(f^{r}=f^{r,\sharp}+f^{r,\prime}\para V)_{r\in[0,T]}$ be a sequence of right-hand-sides with 
\begin{align*}
C_{f}:=\sup_{r\in[0,T]}[\norm{f^{r,\sharp}}_{\mathcal{L}_{r}^{\gamma',\alpha+2\beta-1}}+\norm{f^{r,\prime}}_{\mathcal{L}_{r}^{\gamma',\alpha+\beta-1}}]<\infty.
\end{align*} 
Let for $r\in [0,T]$, $(u^{r}_{t})_{t\in [0,r]}$ be the solution of the backward Kolmogorov PDE for $\mathcal{G}^{\calV}$ with terminal condition $u^{r}_{r}=y^{r}$ and right-hand side $f^{r}$.\\ 
Then, the following uniform bound for the solutions $(u^{r})$ holds true
\begin{align}\label{eq:uniform-singular}
\MoveEqLeft
\sup_{r\in[0,T]}[\norm{u^{r,\sharp}}_{\mathcal{L}_{r}^{\gamma,2(\alpha+\beta)-1}}+\norm{u^{r}}_{\mathcal{L}_{r}^{\gamma',\alpha+\beta}}]\nonumber\\&\lesssim_{T} 
\lambda_{\overline{T},\calV}^{-1}\paren[\bigg]{\sup_{r\in[0,T]}[\norm{y^{r,\sharp}}_{\calC^{(2-\gamma')\alpha+2\beta-1}_p}+\norm{f^{r,\sharp}}_{\mathcal{L}_{r}^{\gamma',\alpha+2\beta-1}}] \nonumber\\&\qquad\qquad+\norm{\calV}_{\calX^{\beta,\gamma'}}\sup_{r\in[0,T]}[\norm{y^{r,\prime}}_{\calC^{\alpha+\beta-1}_p}+\norm{f^{r,\prime}}_{\mathcal{L}_{r}^{\gamma',\alpha+\beta-1}}]},
\end{align} 
where $\lambda_{\overline{T},\calV}:=1-(\overline{T}^{\gamma-\gamma'}\vee\overline{T}^{ \gamma'-\gamma''})\norm{\calV}_{\calX^{\beta,\gamma'}}(1+\norm{\calV}_{\calX^{\beta,\gamma'}})>0$.\\
In particular, replacing $y^{r}$ by $y^{r}_{1}-y^{r}_{2}$ and $f^{r}$ by $f^{r}_{1}-f^{r}_{2}$ with analogue bounds, a uniform Lipschitz bound for the solutions $u^{r}_{1}-u^{r}_{2}$ follows.\\
In the setting of \cref{cor:non-singular}, the bound \eqref{eq:uniform-singular} holds true with $\gamma=\gamma'=0$, under the assumption, that $C_{f}+C_{y}<\infty$ for $\gamma'=0$.
\end{corollary}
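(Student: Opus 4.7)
The plan is to rerun the fixed-point construction from the proof of \cref{thm:singular} on subintervals of $[0,r]$ of length $\overline{T}$, tracking all constants uniformly in $r \in [0, T]$. The key point is that $\overline{T}$, chosen so that $\lambda_{\overline{T}, \calV} > 0$, depends only on $\|\calV\|_{\calX^{\beta,\gamma'}}$, so the same $\overline{T}$ works for all $r \in [0,T]$, and the iteration count $\lceil T/\overline{T}\rceil$ is likewise $r$-independent.

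On the last subinterval $[r-\overline{T}, r]$, the contraction estimate \eqref{eq:contractionest} applied to the fixed point $(u^{r},(u^{r})')$ yields, after absorbing the contraction term onto the left-hand side,
\[
\norm{(u^r,(u^r)')}_{\mathcal{D}^{\gamma}_{\overline{T},r}} \lesssim \lambda_{\overline{T},\calV}^{-1}\bigl(\norm{y^{r,\sharp}}_{\calC^{(2-\gamma')\alpha+2\beta-1}_p} + \norm{f^{r,\sharp}}_{\mathcal{L}_r^{\gamma',\alpha+2\beta-1}} + \norm{\calV}_{\calX^{\beta,\gamma'}}(\norm{y^{r,\prime}}_{\calC^{\alpha+\beta-1}_p} + \norm{f^{r,\prime}}_{\mathcal{L}_r^{\gamma',\alpha+\beta-1}})\bigr),
\]
by inserting $y^r, f^r$ into the bounds for the individual terms in \eqref{eq:decomp-Keq} derived in the proof of \cref{thm:singular}; in particular, the factor $\norm{\calV}_{\calX^{\beta,\gamma'}}$ accompanying the prime-data arises from the commutators $C_1(y^{r,\prime},V_r)$ and $C_2(-f^{r,\prime},V)$. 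Taking $\sup_{r \in [0,T]}$ and bounding by $C_y, C_f$ gives the uniform estimate on the final subinterval.

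For earlier subintervals $[r-k\overline{T}, r-(k-1)\overline{T}]$, $k \geq 2$, the terminal condition $u^r_{r-(k-1)\overline{T}}$ is a paracontrolled distribution with regular sharp part $u^{r,\sharp}_{r-(k-1)\overline{T}}$, whose norm is controlled via the interpolation estimate \eqref{eq:i3b} by the previous iteration's $\mathcal{D}^{0}$-norm. The construction of \cref{cor:non-singular} then applies with $\gamma = \gamma' = 0$, and the contraction estimate, again with factor $\lambda_{\overline{T},\calV}^{-1}$, bounds the new $\mathcal{D}^{0}$-norm by a multiple of the previous iteration's norm plus the contribution from $f^r$ on the current subinterval. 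Iterating over at most $\lceil T/\overline{T}\rceil$ subintervals produces a multiplicative constant of the form $\tilde{C}^{T/\overline{T}}$ depending only on $\calV$, which is absorbed in $\lesssim_T$. To assemble the pieces into a single bound on $[0,r]$ in the $\mathcal{L}^{\gamma',\alpha+\beta}_{r}$ norm (respectively $\mathcal{L}^{\gamma,2(\alpha+\beta)-1}_{r}$ for $u^{r,\sharp}$), I use the splitting of blow-up and Hölder norms across subintervals as in the proof of \cref{thm:cont} (see \eqref{eq:partition1}, \eqref{partition2}, \eqref{eq:sharp-partition}): the blow-up at $t=r$ appears only on the last subinterval together with its $\lambda_{\overline{T},\calV}^{-1}$-factor, while the remaining subintervals contribute non-blow-up norms controlled by the iteration above. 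The main obstacle is to ensure the final bound is uniform in $r$, which works because $\overline{T}$ and all iteration constants depend only on $\calV$ and on the uniform data bounds $C_y, C_f$, never on $r$. The non-singular version stated at the end follows identically, applying \cref{cor:non-singular} on every subinterval and taking $\gamma = \gamma' = 0$ throughout.
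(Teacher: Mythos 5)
Your proposal follows essentially the same route as the paper: split $[0,r]$ into subintervals of length $\overline{T}$ chosen so that $\lambda_{\overline{T},\calV}>0$, note that $\overline{T}$ (and hence the number of subintervals) depends only on $\norm{\calV}_{\calX^{\beta,\gamma'}}$ and not on $r$, run the contraction/fixed-point estimates on each subinterval with the blow-up confined to the final one, and assemble via the norm-splitting argument from the proof of \cref{thm:cont}. The paper presents this by directly invoking \cref{thm:cont} (with $\calV=\calW$) and estimates \eqref{eq:product-bound}, \eqref{eq:rgeneq-pr1}, whereas you re-derive the per-subinterval contraction bound explicitly, but the substance — uniform choice of $\overline{T}$, interpolation bound \eqref{eq:i3b} to propagate terminal conditions across subintervals, linearity for the Lipschitz statement, and \cref{cor:non-singular} for the $\gamma=\gamma'=0$ case — is the same.
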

\begin{remark}\label{rem:uniform-y}
In setting of \cref{thm:ygeneq} for $\beta$ in the Young regime and considering bounded sets of terminal conditions $(y^{r})_{r}\subset \calC^{(1-\gamma)\alpha+\beta}_p$ and right-hand-sides $f^{r}\subset\mathcal{L}_{r}^{\gamma,\beta}$ for $\gamma\in [0,1)$, an analogue uniform Lipschitz bound for the solutions $(u^{r})$ on $[0,r]$ holds true. The proof is similar except much easier.
\end{remark}
\begin{proof}
The proof follows from \cref{thm:cont} replacing $T$ by $r$ and considering paracontrolled solutions on $[0,r]$ in the sense of \eqref{eq:para-r}. Then, by \eqref{eq:product-bound} and \eqref{eq:rgeneq-pr1} from the proof of \cref{thm:cont} for $\calV=\mathcal{W}$ and splitting the interval $[0,r]$ in subintervals of length $\overline{T}$, we obtain for every $r\leqslant T$,
\begin{align*}
\MoveEqLeft
\norm{u^{r,\sharp}}_{\mathcal{L}_{r}^{\gamma,2(\alpha+\beta)-1}}+\norm{u^{r}}_{\mathcal{L}_{r}^{\gamma',\alpha+\beta}}\\&\lesssim_{T} C(r)\lambda_{\overline{T},\calV}^{-1}\paren[\bigg]{\sup_{r\in[0,T]}[\norm{y^{r,\sharp}}_{\calC^{(2-\gamma')\alpha+2\beta-1}_p}+\norm{f^{r,\sharp}}_{\mathcal{L}_{r}^{\gamma',\alpha+2\beta-1}}] \nonumber\\&\qquad\qquad\qquad+\norm{\calV}_{\calX^{\beta,\gamma'}}\sup_{r\in[0,T]}[\norm{y^{r,\prime}}_{\calC^{\alpha+\beta-1}_p}+\norm{f^{r,\prime}}_{\mathcal{L}_{r}^{\gamma',\alpha+\beta-1}}]}.
\end{align*}
The dependence of the constant $C(r)$ on $r\leqslant T$ is as follows: $C(r)\lesssim \frac{r}{\overline{T}}\leqslant\frac{T}{\overline{T}}$. Notice that the choice of $\overline{T}$ only depends on $\norm{\calV}$, which is fixed here. Thus we obtain \eqref{eq:uniform-singular}. As the solution $u^{r}$ depends linearily on the terminal condition $y^{r}$ and the right-hand side $f^{r}$, the uniform Lipschitz bound follows.
\end{proof}
\begin{remark}\label{rem:lin-in-y-f}
Let $(V^{m})$ be such that $\calV^{m}:=(V^{m},(\sum_{i}P(\partial_{i}V^{m,j})\reso V^{i})_{j})\stackrel{m \to\infty}{\to} \calV $ in $\calX^{\beta,\gamma'}$. Let $(f^{r})$, $(y^{r})$ be as in the corollary. Moreover, let  $(y^{r,m})$ with $y^{r,m}=y^{r,\sharp,m}+y^{r,\prime}\para V_{r}^{m}$ be such that $\sup_{r\in [0,T]}\norm{y^{r,\sharp,m}-y^{r,\sharp}}_{\calC_{p}^{(2-\gamma')\alpha+2\beta-1}}\to 0$ for $m\to\infty$. Analogously, let $(f^{r,m})$ with $f^{r,m}=f^{r,\sharp,m}+f^{r,\prime}\para V^{m}$ and convergence of $(f^{r,\sharp,m})_m$. Let $u^{r}$ and $u^{r,m}$ be the solutions for $\mathcal{G}^{\calV}$ with right-hand side $f^{r}$ and terminal conditions $y^{r}$ and $y^{r,m}$, respectively.
Then the proof of the corollary furthermore shows that
\begin{align*}
\MoveEqLeft
\sup_{r\in[0,T]}[\norm{u^{r,\sharp}-u^{r,\sharp,m}}_{\mathcal{L}_{r}^{\gamma,2(\alpha+\beta)-1}}+\norm{u^{r}-u^{r,m}}_{\mathcal{L}_{r}^{\gamma',\alpha+\beta}}]\nonumber\\&\lesssim_{T} 
\lambda_{\overline{T},\calV}^{-1}\paren[\big]{\sup_{r\in[0,T]}[\norm{y^{r,\sharp}-y^{r,\sharp,m}}_{\calC^{(2-\gamma')\alpha+2\beta-1}_p}+\norm{f^{r,\sharp}-f^{r,\sharp,m}}_{\mathcal{L}_{r}^{\gamma',\alpha+2\beta-1}}] \nonumber\\&\qquad\qquad+\norm{\calV-\calV^{m}}_{\calX^{\beta,\gamma'}}\sup_{r\in[0,T]}[\norm{y^{r,\prime}}_{\calC^{\alpha+\beta-1}_p}+\norm{f^{r,\prime}}_{\mathcal{L}_{r}^{\gamma',\alpha+\beta-1}}]}\\&\to 0,
\end{align*} for $m\to\infty$.
\end{remark}

\end{section}

\appendix
\begin{section}{Appendix}\label{Appendix A}
\begin{proof}[Proof of \cref{lem:La-comm}]
The proof of the lemma uses ideas from the proof of \cite[Lemma 5.3.20]{doktorp}. Let $\psi=p_{0}\in C_{c}^{\infty}$ and let $j\geqslant 0$ (for $j=-1$, $\Delta_{j}(f\para g)=0$, so there is nothing to estimate).  
Then we estimate (notation: $S_{j-1}u:=\sum_{l\leqslant j-1}\Delta_{l}u$)
\begin{align*}
\MoveEqLeft
\norm{\Delta_{j}[(-\La)(f\para g)-f\para (-\La)g]}_{L^{p}}\\&=\paren[\bigg]{\int_{\R^{d}}\abs[\bigg]{\int_{\R^{d}}\F^{-1}(-\psi_{\nu}^{\alpha}p_{j})(x-y)(S_{j-1}f(y)-S_{j-1}f(x))\Delta_{j}g(y)dy}^{p}dx}^{1/p}
\\&\lesssim \sum_{\abs{\eta}=1}\norm{ [z\mapsto z^{\eta}\F^{-1}(-\psi_{\nu}^{\alpha}p_{j})(z)]\ast \partial^{\eta}(S_{j-1}f) }_{L^{p}}\norm{\Delta_{j}g}_{L^{\infty}}
\\&\lesssim\sum_{\abs{\eta}=1}\norm{z\mapsto z^{\eta}\F^{-1}(-\psi^{\alpha}_{\nu}p_{j})(z)}_{L^{1}}\norm{\partial^{\eta}S_{j-1}f}_{L^{p}}\norm{\Delta_{j}g}_{L^{\infty}}
\end{align*} for a multi-index $\eta$ and using that $S_{j-1}f(x)-S_{j-1}f(y)=\int_{0}^{1}DS_{j-1}f(\lambda x+(1-\lambda)y))(x-y)d\lambda$ with $\lambda x+(1-\lambda)y = (1+\lambda)x-\lambda y - (x-y)$ (and substituting $y\to x-y$, $x\to (1+\lambda)x-\lambda y$) and Young's inequality for the last estimate. We have that, as $\sigma<1$, 
\begin{align*}
\norm{\partial^{\eta}S_{j-1}f}_{L^{p}}\norm{\Delta_{j}g}_{L^{\infty}}\lesssim 2^{-j(\sigma-1+\varsigma)}\norm{\partial^{\eta}f}_{\calC^{\sigma-1}_{p}}\norm{g}_{\calC^{\varsigma}}\lesssim2^{-j(\sigma-1+\varsigma)}\norm{f}_{\calC^{\sigma}_{p}}\norm{g}_{\calC^{\varsigma}}.
\end{align*} 
Moreover, we obtain
\begin{align*}
\norm{z\mapsto z^{\eta}\F^{-1}(-\psi^{\alpha}_{\nu}p_{j})(z)}_{L^{1}}&=2^{j\alpha}\norm{z\mapsto z^{\eta}\F^{-1}(\psi^{\alpha}_{\nu}(2^{-j}\cdot)p_{0}(2^{-j}\cdot))(z)}_{L^{1}}\\&=2^{j\alpha}2^{-j}\norm{\F^{-1}(\partial^{\eta}[\psi^{\alpha}_{\nu}p_{0}](2^{-j}\cdot))}_{L^{1}}\\&\lesssim 2^{j(\alpha-1)}
\end{align*} using that 
\begin{align*}
\norm{\F^{-1}(\partial^{\eta}[\psi^{\alpha}_{\nu}p_{0}](2^{-j}\cdot))}_{ L^{1}}=\norm{2^{jd}\F^{-1}(\partial^{\eta}[\psi^{\alpha}_{\nu}p_{0}])(2^{-j}\cdot)}_{ L^{1}}=\norm{\F^{-1}(\partial^{\eta}[\psi^{\alpha}_{\nu}p_{0}])}_{ L^{1}}<\infty.
\end{align*} 
Together we have 
\begin{align*}
\norm{\Delta_{j}[(-\La)(f\para g)-f\para (-\La)g]}_{L^{p}}&\lesssim 2^{-j(\sigma+\varsigma-\alpha)}\norm{f}_{\calC^{\sigma}_{p}}\norm{g}_{\calC^{\varsigma}},
\end{align*} which yields the claim.
\end{proof}

\begin{proof}[Proof of \cref{schaudercom}]
For $\vartheta\in [-1,\infty)$, the claim follows from \cite[Lemma 5.3.20 and Lemma 5.5.7]{doktorp}, applied to $\varphi(z)=\exp(-\psi^{\alpha}_{\nu}(z))$. Here, \cite[Lemma 5.3.20]{doktorp} can be generalized, with the notation from that lemma, to $u\in\calC^{\alpha}_{p}$ for $p\in[1,\infty]$ arguing analoguously as in the proof of \cref{lem:La-comm}.\\
It remains to prove the commutator for $\vartheta\in[-\alpha,-1)$. For that we note that 
\begin{align*}
P_{t}( u\para v)- u\para P_{t}(v)&=(P_{t}-\operatorname{Id})(u\para v) -u \para (P_{t}-\operatorname{Id})v\\&=\int_{0}^{t}[(-\La) P_{r}(u\para v) -u \para (-\La) P_{r}v]dr.
\end{align*}
For the operator $(-\La)P_{r}$ we have by \cref{lem:a1} (whose claim follows from \eqref{eq:scom} for $\vartheta\geqslant 0$ and \cref{lem:La-comm}), that for $\theta\geqslant 0$ (uniformly in $r\in[0,t]$)
\begin{align*}
\norm{(-\La)P_{r}(u\para v)-u\para (-\La)P_{r}v}_{\calC^{\sigma+\varsigma-\alpha+\theta}_p}\lesssim r^{-\theta/\alpha}\norm{u}_{\calC^{\sigma}_p}\norm{v}_{\calC^{\varsigma}}
\end{align*} holds true and thus we obtain (taking $\theta=\vartheta+\alpha\geqslant 0$)
\begin{align*}
\MoveEqLeft
\norm{P_{t}( u\para v)- u\para P_{t}(v)}_{\calC^{\varsigma+\sigma+\vartheta}_p}\\&\leqslant\int_{0}^{t}\norm{(-\La) P_{r}(u\para v) -u \para (-\La) P_{r}v}_{\calC^{(\varsigma+\sigma-\alpha)+(\vartheta+\alpha)}_p}dr\\&\lesssim  \norm{u}_{\calC^{\sigma}_p}\norm{v}_{\calC^\varsigma}\int_{0}^{t}r^{-(\vartheta+\alpha)/\alpha}dr\lesssim t^{-\vartheta/\alpha}\norm{u}_{\calC^{\sigma}_p}\norm{v}_{\calC^\varsigma},
\end{align*} where the last two estimates are valid for $\vartheta\in [-\alpha,0)$.
\end{proof}

\begin{proof}[Proof of \cref{lem:a1}]
We have that 
\begin{align*}
(-\La)P_{t}(u\para v)-u\para (-\La)P_{t}v &= (-\La)\paren[\big]{P_{t}(u\para v)-u\para P_{t}v}\\&\quad+(-\La)(u\para P_{t}v)-u\para (-\La)P_{t}v.
\end{align*} 
The first summand, we estimate by the commutator for $(P_{t})$ from \cref{schaudercom}, 
and continuity of the operator $(-\La)$ from \cref{prop:contfl}, which gives
\begin{align*}
\norm{(-\La)\paren[\big]{P_{t}(u\para v)-u\para P_{t}v}}_{\calC^{\sigma+\varsigma+\theta-\alpha}_{p}}&\lesssim\norm{P_{t}(u\para v)-u\para P_{t}v}_{\calC^{\sigma+\varsigma+\theta}_{p}}\\&\lesssim t^{-\vartheta/\alpha}\norm{u}_{\calC^{\sigma}_{p}}\norm{v}_{\calC^{\varsigma}}.
\end{align*}
The second summand follows from the commutator for $(-\La)$. If $\alpha=2$, then the estimate is immediate due to Leibnitz rule, $\sigma<1$ and Schauder estimates for $P_{t}$ as $\theta\geqslant 0$. If $\alpha\in (1,2)$, then we apply \cref{lem:La-comm} with $f=u$ and $g=P_{t}v$ 
and use the Schauder estimates with $\theta\geqslant 0$, \cref{schauder}, to obtain
\begin{align*}
\norm{(-\La)(u\para P_{t}v)-u\para (-\La)P_{t}v}_{\calC^{\sigma+\varsigma-\alpha+\theta}_{p}}\lesssim \norm{u}_{\calC^{\sigma}_{p}}\norm{P_{t}v}_{\calC^{\varsigma+\theta}}\lesssim t^{-\theta/\alpha}\norm{u}_{\calC^{\sigma}_{p}}\norm{v}_{\calC^{\varsigma}}.
\end{align*} 
Altogether, we obtain the desired bound. 
\end{proof}

\begin{proof}[Proof of \cref{lem:maxreg}]
The proof of the lemma uses the ideas from the proof of \cite[Lemma A.9]{Gubinelli2015Paracontrolled}.
Let $\delta\in (0,\frac{T-t}{2})$ to be chosen later. Then we have that for $j\geqslant -1$ 
\begin{align*}
\Delta_{j}\int_{t}^{T}f_{t,r}dr=\int_{t}^{T}\Delta_{j}f_{t,r}dr=\int_{t+\delta}^{T}\Delta_{j}f_{t,r}dr+\int_{t}^{t+\delta}\Delta_{j}f_{t,r}dr.
\end{align*}
The first summand we estimate as follows, using Minkowski's inequality,
\begin{align*}
\norm[\bigg]{\int_{t+\delta}^{T}\Delta_{j}f_{t,r}dr}_{L^{p}}&\leqslant\int_{t+\delta}^{T}\norm{\Delta_{j}f_{t,r}}_{L^{p}}dr\\&\leqslant C 2^{-j(\sigma+\varsigma+\epsilon\varsigma)}\int_{t+\delta}^{T} (T-r)^{-\gamma}(r-t)^{-(1+\epsilon)}dr\\& = C 2^{-j(\sigma+\varsigma+\epsilon\varsigma)} (T-t)^{-\gamma-\epsilon}\int_{\delta/(T-t)}^{1}(1-r)^{-\gamma}r^{-(1+\epsilon)}dr\\&\leqslant [2 \max(\epsilon^{-1},(1-\gamma)^{-1}) C] 2^{-j(\sigma+\varsigma+\epsilon\varsigma)} (T-t)^{-\gamma-\epsilon}(\delta/(T-t))^{-\epsilon}\\& = [2 \max(\epsilon^{-1},(1-\gamma)^{-1})C] 2^{-j(\sigma+\varsigma)} (T-t)^{-\gamma}(2^{j\varsigma}\delta)^{-\epsilon},
\end{align*} where we used that for $\sigma\in(0,\frac{1}{2})$, as $\epsilon>0$ and $\gamma<1$,
\begin{align*}
\int_{\sigma}^{1}(1-r)^{-\gamma}r^{-(1+\epsilon)}dr&= \int_{\sigma}^{1/2}(1-r)^{-\gamma}r^{-(1+\epsilon)}dr+\int_{1/2}^{1}(1-r)^{-\gamma}r^{-(1+\epsilon)}dr\\&\leqslant [(\frac{1}{2})^{-\gamma}\epsilon^{-1}+(\frac{1}{2})^{-\gamma}(1-\gamma)^{-1}]\sigma^{-\epsilon}\leqslant 2 \max(\epsilon^{-1},(1-\gamma)^{-1})\sigma^{-\epsilon}.
\end{align*} 
For the second summand, we have
\begin{align*}
\norm[\bigg]{\int_{t}^{t+\delta}\Delta_{j}f_{t,r}dr}_{L^{p}}&\leqslant C 2^{-j\sigma}\int_{t}^{t+\delta}(T-r)^{-\gamma}dr\\&= \frac{C}{1-\gamma} 2^{-j\sigma}[(T-t)^{1-\gamma}-(T-t-\delta)^{1-\gamma}]\\&= \frac{C}{1-\gamma} 2^{-j\sigma}(T-t)^{-\gamma}[(T-t)-(T-t-\delta)\paren[\big]{\frac{T-t}{T-t-\delta}}^{\gamma}]\\&\leqslant \frac{C}{1-\gamma}2^{-j\sigma}(T-t)^{-\gamma}\delta.
\end{align*}
The goal is to estimate $\sup_{j\geqslant -1} 2^{j(\sigma+\varsigma)}\norm[\big]{\Delta_{j}\int_{t}^{T}f_{t,r}dr}_{L^{p}}$. For that purpose, we use for $j$ such that $2^{-j\varsigma}\leqslant \frac{T-t}{2}$ the above estimates for $\delta=2^{-j\varsigma}$. If $j$ is such that $2^{-j\varsigma}>\frac{T-t}{2}$, then we trivially estimate
\begin{align*}
\norm[\bigg]{\Delta_{j}\int_{t}^{T}f_{t,r}dr}_{L^{p}}\leqslant C 2^{-j\sigma}\int_{t}^{T}(T-r)^{-\gamma}dr&= \frac{C}{1-\gamma} 2^{-j\sigma}(T-t)^{1-\gamma}\\&\leqslant \frac{C}{1-\gamma} 2^{-j(\sigma+\varsigma)}(T-t)^{-\gamma}
\end{align*} using $\gamma<1$. Together we thus obtain uniformly in $t\in[0,T]$
\begin{align*}
\sup_{j\geqslant -1} 2^{j(\sigma+\varsigma)}\norm[\bigg]{\Delta_{j}\int_{t}^{T}f_{t,r}dr}_{L^{p}}\leqslant [2 \max(\epsilon^{-1},(1-\gamma)^{-1}) C] (T-t)^{-\gamma},
\end{align*} which yields the claim.
\end{proof}
\end{section}

\section*{Acknowledgements}

H.K.~is supported by the Austrian Science Fund (FWF) Stand-Alone programme P 34992. Part of the work was done when H.K. was employed at Freie Universität Berlin and funded by the DFG under Germany's Excellence Strategy - The Berlin Mathematics Research Center MATH+ (EXC-2046/1, project ID: 390685689). N.P.~ gratefully acknowledges financial support by the DFG via Research Unit FOR2402.

\end{document}